\newtheorem{dfn}{Definition}[section]
\newtheorem{thm}[dfn]{Theorem}
\newtheorem{prop}[dfn]{Proposition}
\newtheorem{lem}[dfn]{Lemma}
\newtheorem{cor}[dfn]{Corollary}
\newtheorem{rem}[dfn]{Remark}
\newcommand{\expig}{e^{\mathrm{i}\theta}}
\newcommand{\expmig}{e^{-\mathrm{i}\theta}}
\newcommand{\R}{\mathbb{R}}
\newcommand{\Z}{\mathbb{Z}}
\newcommand{\N}{\mathbb{N}}
\newcommand{\C}{\mathbb{C}}
\newcommand{\ba}{\bar{a}}
\newcommand{\bU}{\bar{U}}
\newcommand{\cB}{\mathcal{B}}
\newcommand{\cE}{\mathcal{E}}
\newcommand{\cI}{\mathcal{I}}
\newcommand{\cN}{\mathcal{N}}
\newcommand{\cP}{\mathcal{P}}
\newcommand{\cT}{\mathcal{T}}
\newcommand{\cX}{\mathcal{X}}
\newcommand{\ta}{\tilde{a}}
\newcommand{\bydef}{\,\stackrel{\mbox{\tiny\textnormal{\raisebox{0ex}[0ex][0ex]{def}}}}{=}\,} 
\newcommand{\im}{\mathrm{i}}
\newcommand{\xx}{\mathrm{x}}
\newcommand{\bfj}{{\boldsymbol j }}
\newcommand{\bfi}{{\boldsymbol i }}
\newcommand{\fL}{\mathfrak{L}}
\numberwithin{equation}{section}
\begin{document}
\title{
Rigorous numerics for nonlinear heat equations in the complex plane of time
}
\author{%
Akitoshi~Takayasu\thanks{%
	Faculty of Engineering, Information and Systems, University of Tsukuba, 1-1-1 Tennodai, Tsukuba, Ibaraki 305-8573, Japan (\texttt{takitoshi@risk.tsukuba.ac.jp})
}\and
Jean-Philippe~Lessard\thanks{%
	Department of Mathematics and Statistics, McGill University,
	Montreal, QC H3A 0B9, Canada
}\and
Jonathan~Jaquette\thanks{%
	Department of Mathematics, Brandeis University,
	Waltham, MA 02453, US
}\and
Hisashi~Okamoto\thanks{%
	Department of Mathematics, Gakushuin University, Tokyo 171-8588, Japan
}
}
\date{\today}
\maketitle
\begin{abstract} 
In this paper, we introduce a method for computing rigorous local inclusions of solutions of Cauchy problems for nonlinear heat equations for complex time values. Using a solution map operator, we construct a simplified Newton operator and show that it has a unique fixed point. The fixed point together with its rigorous bounds provides the local inclusion of the solution of the Cauchy problem. The local inclusion technique is then applied iteratively to compute solutions over long time intervals. This technique is used to prove the existence of a branching singularity in the nonlinear heat equation. Finally, we introduce an approach based on the Lyapunov-Perron method to  calculate part of a center-stable manifold and prove that an open set of solutions of the Cauchy problem converge to zero, hence yielding the global existence of the solutions in the complex plane of time.
\end{abstract}

{\bf Keywords : } blow-up solutions for nonlinear heat equations, branching singularity, global existence of solution, Lyapunov-Perron method, contraction mapping, rigorous numerics



\section{Introduction}
In this paper, we consider solutions of a nonlinear heat equation with the following setting:
\begin{align}\label{eq:nheq}
\begin{cases}
	u_t=u_{xx}+u^2, & x\in(0,1), ~t\ge 0,\\
	u(0,x)=u_0(x), & x\in(0,1)
\end{cases}
\end{align}
with the periodic boundary condition in $x$ variable.
Here the subscripts denote the derivatives in the respective variables and $u_0(x)$ is a given initial data.
It is well-known that solutions of the nonlinear heat equation \eqref{eq:nheq} may blow up in finite time, that is the $L^{\infty}$-norm of the solution tends to $\infty$ as $t$ tends to a certain $B<\infty$.
Such a $B$ is called the \emph{blow-up time} of \eqref{eq:nheq}.
There are plenty of studies for blow-up problems of nonlinear heat equations.
One can consult previous studies by, e.g., 
\cite{bib:FM2002,zbMATH05152729} 
and references therein.

We extend the time variable of \eqref{eq:nheq} into the complex plane.
Changing the $t$ variable into $z$, the aim of this paper is to study dynamics of the complex-valued nonlinear heat equation
\begin{align}\label{eq:cnheq}
u_z=u_{xx}+u^2,\quad x\in(0,1), \quad\mathrm{Re}(z)\ge 0,
\end{align}
under the periodic boundary condition in $x$ with initial data $u(0,x)=u_0(x)$.
Here, the subscript $z$ denotes the complex-derivative with respect to $z$ and $\mathrm{Re}$/$\mathrm{Im}$ denote the real/imaginary part of complex values, respectively.

In his pioneering works \cite{MASUDA1983119,Masuda1984} for this problem, Masuda considered the solution of \eqref{eq:cnheq} under the Neumann boundary condition and proved global existence of the solution in the shaded domain of Fig.~\ref{fig:fig1}\,$(a)$ if the initial data $u_0(x)$ is close to a constant.
Masuda has also shown that the solution is analytic in both the shaded domain and its mirror-image about the real axis (see Fig.~\ref{fig:fig1}\,$(b)$). 
Furthermore, it is proved that the initial data is a constant if the solution agrees in the intersection of the two domains, which is shown in Fig.~\ref{fig:fig1}\,$(b)$.
This result implies that a non-constant solution is not a single-valued function in $\mathrm{Re}(z)> z_B$, where $z_B\equiv B$ denotes the blow-up time.
It indicates that the singularity is a branching point.

\begin{figure}[htbp]\em
	\centering
	\begin{minipage}{0.49\hsize}
		\centering
		\includegraphics{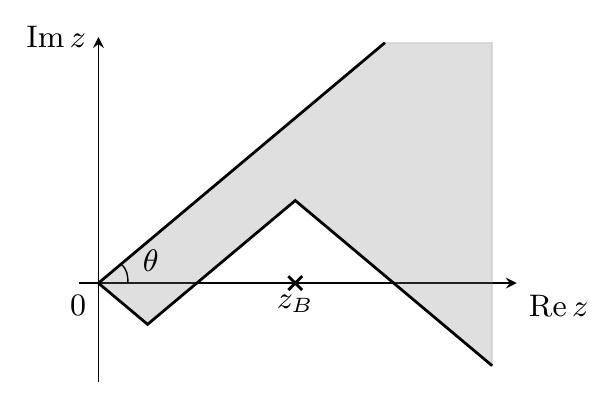}\\
		$(a)$
	\end{minipage}
	\begin{minipage}{0.49\hsize}
		\centering
		\includegraphics{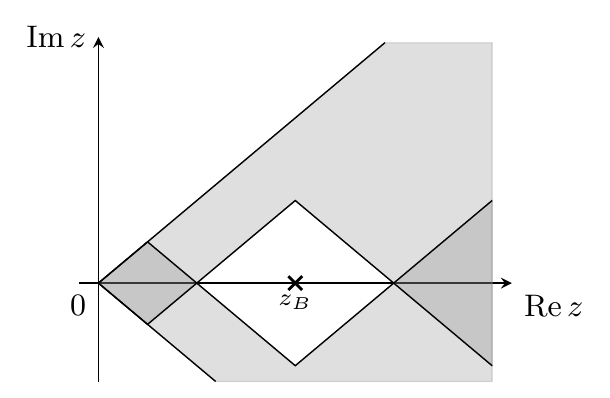}\\
		$(b)$
	\end{minipage}
	\caption{%
		$(a)$~The shaded domain in which global existence of the solution is proved by Masuda \cite{MASUDA1983119,Masuda1984}. Here, $0<\theta<\pi/2$ and $z_B$ denotes the blow-up time.
		$(b)$~The shaded domain and its mirror-image about the real axis: intersection of two domains is plotted in dark gray.
	}
	\label{fig:fig1}
\end{figure}

Following Masuda, Cho et al.~\cite{Cho2016} numerically tested dynamics of the solution of \eqref{eq:cnheq} under the periodic boundary condition.
They have shown that the solution may converge to the zero function on a straight path 
$\Gamma_\theta \bydef \{ z \in \mathbb{C} :  z=t\expig, \quad t \ge 0 \}$ where $\theta \in \left( -\frac{\pi}{2},\frac{\pi}{2} \right)$ and $\im =\sqrt{-1}$.
They have also shown that the solution of \eqref{eq:cnheq} may have only one singularity on the real axis, which branches the Riemann surface of complex-valued solutions.
As conclusions of their study, two conjectures have been proposed.
\begin{itemize}
	\item The analytic function defined by the nonlinear heat equation \eqref{eq:cnheq} has branching singularities and only branching singularities, unless it is constant in $x$,
	\item The nonlinear Schr\"odinger equation, which is the case of $\theta = \pm\pi/2$, is globally well-posed for any real initial data, small or large.
\end{itemize}
These results were based on numerical observations and no mathematical proof was presented.
The authors in \cite{Cho2016} said that
\begin{quote}\em
	``in fact if we wish to add anything new in a rigorous way, we encounter a serious difficulty. For instance, we tried unsuccessfully to prove global existence without assuming closeness to a constant.''
\end{quote}
This remark and the above conjectures consist of our main motivation for the current paper.

The contribution of the current paper is to give two computer-assisted proofs for the complex-valued nonlinear heat equation \eqref{eq:cnheq}, which extends the results by Masuda \cite{MASUDA1983119,Masuda1984} and mathematically proves the results of numerical observations by Cho et al.~\cite{Cho2016}.
Our first result shows the existence of a branching singularity, which appears at the blow-up point Cho et al.~\cite{Cho2016} calculated to be approximately   $z_B \approx 0.0119$.  

\begin{thm}[Existence of branching singularity]\label{thm:branching}
For the complex-valued nonlinear heat equation \eqref{eq:cnheq} under the periodic boundary condition with the specific initial data $u_0(x)=50(1-\cos(2\pi x))$, there exists a branching singularity at $z_B\in (0.0116,0.0145)$.  
\end{thm}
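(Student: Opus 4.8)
The plan is to pin down $z_B$ by running the rigorous integrator built from the solution-map/simplified-Newton operator along the real axis, combining it with a Kaplan-type blow-up criterion for the upper endpoint, and then deducing the branching from analyticity in complex time together with Masuda's rigidity theorem. Concretely, I would expand the periodic unknown in a Fourier series $u(t,x)=\sum_{k\in\Z}a_k(t)e^{2\pi\im kx}$, which turns \eqref{eq:cnheq} on the real axis into the countable system $\dot a_k=-4\pi^2k^2a_k+(a*a)_k$ on a Banach space of geometrically decaying (analytic-class) coefficient sequences, the quadratic term being the Cauchy product encoding $u^2$. Starting from the explicit data $u_0(x)=50(1-\cos 2\pi x)$ (so $a_0(0)=50$, $a_{\pm1}(0)=-25$, the rest zero), I would apply the local-inclusion result step by step over a partition $0=t_0<t_1<\dots<t_N$: on each $[t_j,t_{j+1}]$ build an approximate solution (Fourier in $x$, a polynomial---Chebyshev or Taylor---in $t$), form the simplified Newton operator through the solution map using the validated enclosure of $u(t_j,\cdot)$ as initial datum, verify the radii-polynomial contraction inequalities, and thereby enclose $u$ on $[t_j,t_{j+1}]$, in particular $u(t_{j+1},\cdot)$. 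Carrying the iteration successfully up to some $t_N>0.0116$ proves that no singularity occurs on $[0,0.0116]$, hence $z_B>0.0116$, and it simultaneously furnishes a validated lower bound $y(t_N)\bydef a_0(t_N)=\int_0^1u(t_N,x)\,dx\ge M$ with $M$ large.

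For the upper bound I would use a Kaplan argument on the spatial average $y(t)=a_0(t)$. Integrating \eqref{eq:cnheq} in $x$ and using periodicity, $\dot y=\int_0^1u^2\,dx\ge\big(\int_0^1u\,dx\big)^2=y^2$ by the Cauchy--Schwarz inequality on the unit interval (no positivity of $u$ is needed). Comparison with $\dot w=w^2$, $w(t_N)=y(t_N)\ge M$, forces $y$---hence $\|u(t,\cdot)\|_\infty$---to blow up no later than $t_N+1/M$. Arranging the partition so that $t_N+1/M<0.0145$---comfortably possible, since the integrator can be pushed to within a fraction of a percent of the numerically observed blow-up $z_B\approx 0.0119$, where $M$ is very large---gives $z_B<0.0145$, so that $z_B\in(0.0116,0.0145)$.

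It then remains to upgrade ``finite-time blow-up at $z_B$'' to ``branching singularity at $z_B$.'' Being a solution of a semilinear parabolic equation with real-analytic data, $u$ extends analytically in $z$ to a complex neighborhood of $[0,z_B)$ and, via Schwarz reflection and the analytic-continuation estimates of the paper, into Masuda's domain and its mirror image about the real axis; since $u_0$ is plainly non-constant in $x$, the rigidity theorem of Masuda recalled in the introduction prevents these two continuations from coinciding on their overlap, so the analytic function determined by \eqref{eq:cnheq} is multivalued around $z_B$---that is, $z_B$ is a branch point.

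The main obstacle is the rigorous long-time integration of the first step: one must reach within roughly $10^{-4}$ of the blow-up time while the $L^\infty$-norm grows without bound, the admissible time steps shrink, higher Fourier modes become active (stressing the geometric-decay norm), and the wrapping effect accumulates over many contraction steps---so keeping the validated enclosures tight enough that the radii-polynomial inequalities still close, and that $a_0(t_N)$ stays above the threshold $1/(0.0145-t_N)$, is the delicate point. The Cauchy--Schwarz differential inequality, the ODE comparison, and the soft branching argument are all routine by comparison.
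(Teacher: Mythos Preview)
Your lower-bound argument---rigorously integrating along the real axis past $t=0.0116$---matches the paper exactly. The gap is in how you obtain the upper bound and, more seriously, how you conclude branching.

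The Kaplan inequality $\dot y=\int_0^1 u^2\,dx\ge y^2$ is correct for the real solution on the real axis, but it only tells you that the solution \emph{blows up} by $t_N+1/M$; it says nothing about the nature of the singularity. The spatially constant solution $u(t,x)=1/(z_B-t)$ satisfies the same conclusion and has a simple pole, not a branch point. To upgrade blow-up to branching you appeal to Masuda's rigidity, but as the introduction makes explicit, Masuda's theorem is proved under Neumann boundary conditions and for initial data close to a constant; neither hypothesis holds for $u_0(x)=50(1-\cos 2\pi x)$ under periodic conditions. More fundamentally, even the \emph{statement} of the rigidity argument presupposes that the solution continues analytically along complex-time rays $\Gamma_\theta$ past the blow-up, into the two overlapping wedges; establishing that continuation for this specific datum is exactly the nontrivial content you have not supplied (and is the subject of the paper's Theorem~\ref{thm:GE}). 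So your final paragraph is circular: the ``analytic-continuation estimates of the paper'' you would like to invoke are the rigorous complex-time integration you have elected not to perform.

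The paper's route is genuinely different. Rather than approaching the singularity along the real axis and then arguing abstractly, it rigorously integrates along a two-segment contour $O\to z_A\to z_C$ with $z_A=0.00725(1+\sqrt3\,\im)$ and $z_C=0.0145$, passing above the blow-up point. At the real endpoint $z_C$ it verifies by interval arithmetic that $\|\mathrm{Im}\,u(z_C,\cdot)\|-\varepsilon^{z_C}>0$. Since the reflection symmetry $u(\bar z,x)=\overline{u(z,x)}$ forces the continuation along the conjugate contour to arrive at $\overline{u(z_C,\cdot)}$, the two continuations disagree at $z_C$, and the solution is multivalued; hence a branch point lies on $(0,z_C)$. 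No appeal to Masuda is needed. A secondary issue: the raw Kaplan bound from $t=0$ gives only $z_B\le 1/50=0.02$, so reaching $0.0145$ by your route would require the validated enclosure of $a_0(t_N)$ at the last step to exceed $1/(0.0145-t_N)\approx 345$, which you have not checked and which the paper does not report.
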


Our second result agrees with Masuda's work for the case of periodic boundary conditions without assumption of closeness to a constant for the initial data.
\begin{thm}[Global existence]\label{thm:GE}
	For $\theta=\pi/3$, $\pi/4$, $\pi/6$ and $\pi/12$, setting a straight path $\Gamma_\theta: z=t\expig$ $(t\ge 0)$ in the complex plane of time, the solution of the complex-valued nonlinear heat equation \eqref{eq:cnheq} under the periodic boundary condition with the specific initial data $u_0(x)=50(1-\cos(2\pi x))$ exists globally in $t$ and converges to the zero function as $t\to\infty$.
\end{thm}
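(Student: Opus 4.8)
The plan is to prove Theorem~\ref{thm:GE} in two stages: a finite-time rigorous integration stage and an infinite-time capture stage based on a center-stable manifold. Along the path $\Gamma_\theta$, write $z = t\expig$, so that \eqref{eq:cnheq} becomes $u_t = \expig(u_{xx} + u^2)$ with periodic boundary conditions; expanding in Fourier modes $u(t,x) = \sum_{k\in\Z} a_k(t) e^{2\pi\im k x}$ turns this into a countable system of ODEs for the coefficients $(a_k)_{k\in\Z}$, with linear part $-\expig(2\pi k)^2 a_k$ and a quadratic convolution nonlinearity. Since $\mathrm{Re}(\expig)=\cos\theta>0$ for the listed angles, all nonzero modes decay linearly; the only neutral/unstable direction is the mode $k=0$, whose linear part vanishes. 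So the origin is (linearly) a center-stable equilibrium with a one-dimensional center direction coming from $a_0$.

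First I would use the local inclusion technique developed earlier in the paper — the simplified Newton operator built from the solution map, whose unique fixed point (Theorem/Proposition stated above, which I may assume) encloses the true solution on a short time interval. Starting from the explicit initial data $u_0(x) = 50(1-\cos(2\pi x))$, which has only the Fourier modes $k\in\{-1,0,1\}$ nonzero, I would iterate this one-step inclusion along $\Gamma_\theta$, chaining the rigorous enclosure at the end of one subinterval into the initial data of the next. This produces, for each of the four angles, a validated tube containing the solution on $[0,T]$ for some finite $T$, together with rigorous $C^0$ (in fact analytic, in a suitable Fourier-weighted space) bounds. The goal of this stage is purely to flow the large initial data down into a small neighborhood of the origin where the linear decay dominates the quadratic term — i.e., to reach a point $u(T,\cdot)$ lying in the region where the center-stable manifold machinery applies.

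Second, I would invoke the Lyapunov--Perron construction referenced in the abstract: parametrize a piece of the center-stable manifold of the origin as a graph over the center coordinate $a_0$ (and a finite number of the slowly-decaying low modes), solve the fixed-point equation for the Lyapunov--Perron integral operator in a Banach space of exponentially-weighted trajectories, and obtain rigorous $C^0$ bounds on this local manifold together with a guarantee that every solution starting on it converges to $0$ as $t\to\infty$. The remaining step is a rigorous inclusion check: verify that the validated enclosure of $u(T,\cdot)$ from the first stage lies inside the computed portion of the center-stable manifold. Combining the two stages — finite-time flow into the manifold, then the manifold's own convergence — yields global existence on $\Gamma_\theta$ and convergence to the zero function, for each listed $\theta$.

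The main obstacle I anticipate is the center direction. Because the $k=0$ mode has zero linear part, the origin is not hyperbolic, and a naive stable-manifold argument fails; one must genuinely control the dynamics of $a_0$, which is driven only by the nonlinearity $\expig \widehat{(u^2)}_0 = \expig\sum_{j} a_j a_{-j}$. The Lyapunov--Perron operator must be set up so that the center coordinate is prescribed as a free parameter and the contraction is carried out on the complementary (decaying) modes with the correct exponential weight; closing this contraction rigorously — in particular bounding the quadratic convolution in the weighted norm and ensuring the interval-arithmetic enclosures are tight enough that the inclusion $u(T,\cdot)\in W^{cs}_{loc}$ actually holds — is the delicate part. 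A secondary difficulty is simply that the initial data is large ($50(1-\cos 2\pi x)$), so the first-stage integration may require many short steps and careful defect control to avoid the enclosure blowing up before reaching the manifold; the choice of $\theta$ bounded away from $\pm\pi/2$ (hence $\cos\theta$ bounded away from $0$) is what makes the linear decay strong enough for this to succeed.
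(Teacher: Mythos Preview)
Your proposal is correct and follows essentially the same two-stage strategy as the paper: rigorous time-stepping along $\Gamma_\theta$ until the solution enters a small validated neighborhood of the origin, followed by a Lyapunov--Perron center-stable manifold argument to conclude convergence to zero. One minor sharpening worth noting: since there is no unstable direction, the center-stable manifold is locally full-dimensional, so the paper does not check membership on a graph but rather builds a foliation over the half of the center line where the scalar ODE $\dot a_0 = e^{i\theta}a_0^2$ does not blow up (namely $\mathrm{Re}(e^{i\theta}a_0)\le 0$), and the final inclusion test is that the endpoint lies in an explicit product-type region $U$ determined only by the Lipschitz constant $\rho$ of the foliation map.
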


The proofs of these results are obtained in Section \ref{sec:Numerical_results} by using {\em rigorous numerics}, via a careful blend of functional analysis, semi-group theory, numerical analysis, fixed point theory, the Lyapunov-Perron method and interval arithmetic. It is worth mentioning that the study of blow-ups and global existence of solutions in differential equations is beginning to be studied with the tools of rigorous numerics (e.g.\ see \cite{MR3339076,MR3348927,matsue_takayasu_blowup,MIZUGUCHI20171,MR3575546,BergPREPApproximating}).
Furthermore, several approaches of rigorous numerics for time-dependent PDEs have been studied in \cite{Cyranka2014,Kinoshita2014,doi:10.1137/141001664,zgliczy2009covering}, etc.
We expect that these rigorous integrators will be applied to understand dynamics of time-dependent PDEs soon.

The present paper is organized as follows.
In Section \ref{sec:setting} we set up the fixed point operator to solve the Cauchy problem \eqref{eq:cnheq} on a straight path in the complex plane.
Using the Fourier series, we derive an infinite-dimensional system of differential equations and define a formulation of the fixed point operator to rigorously provide an enclosure of the solution on a short time interval.
Section \ref{sec:solution_operator} is devoted to defining a solution map operator, which provides a solution of the linearized Cauchy problem with arbitrary forcing term.
Such a solution map operator is defined by what is called the evolution operator.
We show how we verify the existence of such an evolution operator by using rigorous numerics and give a computable estimate for a uniform bound of the evolution operator.
Using the solution map operator, we derive in Section \ref{sec:Localinclusion} a sufficient condition to have a fixed point for the fixed point operator.
Theorem \ref{thm:main_theorem} provides rigorous enclosure of the solution of the Cauchy problem. In Section~\ref{sec:timestepping}, we introduce a method for applying iteratively the local inclusion approach to compute solutions over long time intervals. This technique is then used to prove Theorem~\ref{thm:branching}. In Section~\ref{sec:center-stable-manifold}, we introduce an approach based on the Lyapunov-Perron method in order to calculate part of a center-stable manifold, which is used to prove that an open set of solutions of the given Cauchy problem converge to zero, hence yielding the global existence in time of the solutions. This proves our second main result, namely Theorem~\ref{thm:GE}. The numerical results are presented in details in Section~\ref{sec:Numerical_results}. 


\section{Setting-up the fixed point operator to solve the Cauchy problem}\label{sec:setting}

In this section, we derive the formulation of the fixed point operator which is used to provide a rigorous enclosure for the solution of the Cauchy problem on a short time interval. 
Taking the straight path $\Gamma_\theta = \{ z \in \mathbb{C} :  z=t\expig, \quad t \ge 0 \}$, the complex-valued nonlinear heat equation \eqref{eq:cnheq} is transformed into the equation
\begin{align}\label{eq:CGL}
u_t=\expig\left(u_{xx}+u^2\right),\quad x\in(0,1),\quad t\ge 0
\end{align}
with periodic boundary conditions. Here, we set $\theta\in(-\pi/2,\pi/2)$ and the initial data $u_0(x)=50(1-\cos(2\pi x))$. We expand the unknown function into the Fourier series:
\begin{align}\label{eq:Fourier_expansion}
u(t,x)=\sum_{k\in\Z }a_k(t)e^{\im k\omega x},
\end{align}
where we have set $\omega = 2\pi$.
Plugging \eqref{eq:Fourier_expansion} in the initial-boundary value problem \eqref{eq:CGL}, we have the following infinite-dimensional system of ODEs:
\begin{align}\label{eq:CGL_ode}
\frac{d}{dt}a_k(t)=\expig\left[-k^2\omega^2a_k(t)+\left(a\left(t\right)*a\left(t\right)\right)_k\right]\quad (k\in\Z ),\quad a(0)=\varphi,
\end{align}
where ``$*$'' denotes the discrete convolution product defined by
\[
(b*c)_k\bydef\sum_{m\in\mathbb{Z}}b_{k-m}c_m\qquad (k\in\mathbb{Z})
\]
for bi-infinite sequences $b=(b_k)_{k\in\mathbb{Z}}$ and $c=(c_k)_{k\in\mathbb{Z}}$,
and $\varphi$ is defined by
\[
	\varphi_k=\left\{\begin{array}{cl}
	-25, & k=\pm1\\
	50, & k=0\\
	0, & \mbox{otherwise}.
	\end{array}\right.
\]


For a fixed time $h>0$, let us define $J\bydef(0,h)$ and the Banach space
\begin{equation}\label{eq:X_space}
X\bydef C(J;\ell^1),\quad \|a\|_X\bydef\sup_{t\in J}\|a(t)\|_{\ell^1},
\end{equation}
where
$\ell^1\bydef\left\{a=(a_k)_{k\in\Z }:\sum_{k\in\Z }|a_k|<\infty,~a_k\in\C \right\}$ with norm $\|a\|_{\ell^1}\bydef\sum_{k\in\Z }|a_k|$.
An important and useful feature of $\ell^1$ is that it is a Banach algebra under discrete convolution, namely
\begin{equation} \label{eq:banach_algebra}
\| a * b \|_{\ell^1} \le \| a \|_{\ell^1} \| b \|_{\ell^1}, \quad \text{for all } a,b \in \ell^1. 
\end{equation}
Our task of rigorous numerics is to determine the Fourier coefficients $(a_k(t))_{k\in\Z }$ of the solution of the Cauchy problem \eqref{eq:CGL_ode}.	
By numerical computation, one can  have $(\ba_k(t))_{|k|\le N}$, which is an approximation of the Fourier coefficients with maximal wave number $N$.
Let $$\ba(t)\bydef(\dots,0,0,\ba_{-N}(t),\dots,\ba_N(t),0,0,\dots)$$ be an approximation of $a(t)$.
We will rigorously include the Fourier coefficients in the neighborhood of numerical solution defined by
\begin{equation}\label{eq:Ball}
B_{J}(\ba,\varrho)\bydef\left\{a\in X:\|a-\ba\|_{X}\le \varrho,~a(0)=\varphi\right\}.
\end{equation}

Define the {\em Laplacian} operator $L$ acting on a sequence of Fourier coefficients as
\begin{equation}\label{eq:mul_op}
Lb\bydef\left(-k^2\omega^2b_k\right)_{k\in\Z },~b=(b_k)_{k\in\Z }.
\end{equation}
The domain of the operator $L$ is denoted by $D(L)\bydef\left\{a=(a_k)_{k\in\Z }:\sum_{k\in\Z }k^2|a_k|<\infty\right\}\subset\ell^1$. We define an operator acting on $a\in C^1(J;D(L))$ by
\begin{equation} \label{eq:F=0_Cauchy_problem}
(F(a))(t) \bydef \frac{d}{dt}a(t)-\expig\left(La(t)+a\left(t\right)*a\left(t\right)\right).
\end{equation}
Then we consider the Cauchy problem \eqref{eq:CGL_ode} as the zero-finding problem $F(a)=0$.
It is obvious that $a$ solves \eqref{eq:CGL_ode} if $F(a)=0$ with the initial condition $a(0)=\varphi$ holds.
Let us define the following operator:
\begin{align}\label{eq:simp_Newton_op}
(T(a))(t)\bydef \mathscr{A}\left[\expig\big(a(t)*a(t)-2\ba(t)*a(t)\big)\right], \quad T:X\to X.
\end{align}
As shown in Remark \ref{rem:simplified_newton_op}, this operator is alternative  form of the simplified Newton operator.
We expect that the simplified Newton operator \eqref{eq:simp_Newton_op} has a fixed point $\ta\in B_{J}(\ba,\varrho)$ such that $\ta=T(\ta)$.
Here, $\mathscr{A}$ is an operator to be explicitly defined in the next section.

\begin{rem}\label{rem:simplified_newton_op}
	Suppose $\ba\in C^1(J;D(L))$,
	we take $\mathscr{A}^{\dagger}$ as the Fr\'echet derivative of $F:C^1(J;D(L))\to X$ at $\ba$, say $\mathscr{A}^{\dagger}=DF[\ba]$. It is easy to see that the operator $Id$ satisfies $\mathscr{A}\mathscr{A}^{\dagger}={\rm Id}$ (the identity operator in $X$) with the initial condition.
	Then, the simplified Newton operator satisfies
\begin{align*}
	T(a)&=\mathscr{A}\left[\expig\left(a*a-2\ba*a\right)\right]\\
	&= \mathscr{A}\left[\frac{d}{dt}a-\expig\left(La+2\ba*a\right)-\frac{d}{dt}a+\expig\left(La+a*a\right)\right]\\
	&= \mathscr{A}\left(\mathscr{A} ^{\dagger}a-F(a)\right)= a-\mathscr{A}F(a)
\end{align*}
for $a\in C^1(J;D(L))$ with $a(0)=\varphi$.
We note that this form $a-\mathscr{A}F(a)$ is defined only on $C^1(J;D(L))$ but the simplified Newton operator \eqref{eq:simp_Newton_op} can be defined on $X$.
Moreover, from the property of the operator $\mathscr{A}:X\to C^1(J;D(L))\subset X$, which will be shown in Remark \ref{rem:bootstrap} in the next section, the fixed point satisfies $\ta\in C^1(J;D(L))$ if such a fixed point is obtained.
\end{rem}


\section{The solution map operator \boldmath$\mathscr{A}$\unboldmath} \label{sec:solution_operator}
The simplified Newton operator \eqref{eq:simp_Newton_op} is characterized by the solution map operator $\mathscr{A}$.
We define $\mathscr{A}$ using an evolution operator $U(t,s)$ of the following homogeneous initial value problem (IVP):
\begin{align}\label{eq:linearized_problem}
\frac{d}{dt}b_k(t)+\expig\left[k^2\omega^2b_k(t)-2\left(\ba\left(t\right)*b(t) \right)_k\right]=0\quad(k\in\Z )
\end{align}
for any initial sequence $b(s)=\phi\in\ell^1$ ($0\le s\le t$). More explicitly, $U(t,s)$ is an evolution operator generated on $\ell^1$ and provides the solution of \eqref{eq:linearized_problem} via the relation $b(t)=U(t,s)\phi$. 
Consider \eqref{eq:linearized_problem} with an arbitrary forcing term $g(t) = (g_k(t))_{k \in \Z}$:
\begin{align}\label{eq:linearized_problem_non_homogeneous}
\frac{d}{dt}b_k(t)+\expig\left[k^2\omega^2b_k(t)-2\left(\ba\left(t\right)*b(t) \right)_k\right]= g_k(t)\quad(k\in\Z ).
\end{align}
This leads to the definition of the solution map operator $\mathscr{A}:X\to C^1(J;D(L))\subset X$ given by  
\begin{equation} \label{eq:definition_of_scriptA}
	(\mathscr{A}g)(t) \bydef U(t,s)\phi+\int_s^tU(t,\tau)g(\tau)d\tau.
\end{equation}
In particular, $\mathscr{A}0 =  U(t,s)\phi$.


The remaining tasks of this section consist of constructing the solution map operator \eqref{eq:definition_of_scriptA}. This requires showing the existence of the evolution operator $\left\{U(t,s)\right\}_{0\le s\le t\le h}$ by obtaining a {\em computable} constant $\bm{W_h} > 0$ satisfying
\[
\sup_{0\le s\le t\le h}\|U(t,s)\phi\|_{\ell^1}\le \bm{W_h} \|\phi\|_{\ell^1},\quad\forall\phi\in\ell^1.
\]
More precisely, the constant $\bm{W_h}>0$ provides a uniform bound for the bounded linear operator norm of $U(\cdot,\cdot)$ over the simplex $\mathcal{S}_h \bydef \{(t,s)~:~0\le s\le t\le h\}$, that is 
\begin{equation}\label{eq:W_h_constant}
\| U(t,s) \|_{B(\ell^1)} \le \bm{W_h}, \quad \forall~ (t,s) \in \mathcal{S}_h.
\end{equation}
Here, we use the notation $B(\ell^1)$  to denote the set of all bounded linear operators on $\ell^1$ with corresponding bounded operator norm $\|\cdot \|_{B(\ell^1)}$.

\begin{rem}
It is important to notice that for a given $\ba(t) \in \ell^1$ and $h$ small enough, there exists $\bm{W_h}$ satisfying \eqref{eq:W_h_constant} (e.g.~see \cite{pazy1983semigroups}). This in turns yields the existence of the evolution operator $U(t,s)$ for the linearized problem \eqref{eq:linearized_problem} for all $(t,s) \in \mathcal{S}_h$ for a small enough step size $h>0$. However, once a fixed $h>0$ has been chosen, it is in general difficult to obtain explicitly $\bm{W_h}$ and therefore show the existence of the evolution operator $U(t,s)$ for  $(t,s) \in \mathcal{S}_h$. In the next section, we derive an explicit and computable formulation for the constant $\bm{W_h}$.
\end{rem}

\subsection{Deriving a formulation for \boldmath$W_h$\unboldmath~and existence of the evolution operator}

We begin this section with some notation. Given a finite dimensional (Fourier) projection number $m \in \N$ and a vector $\phi =\left(\phi_{k}\right)_{k \in \Z } \in \ell^1$, define the projection $\pi^{(m)}:\ell^1 \to \ell^1$ as follows:
\[
\left( \pi^{(m)} \phi \right)_k = \begin{cases} \phi_k, & |k| \le m \\ 0, & |k| > m, \end{cases}
\]
for $k \in \Z$. Given $\phi \in \ell^1$, denote $\phi^{(m)} \bydef \pi^{(m)} \phi \in \ell^1$ and $\phi^{(\infty)} \bydef \left( {\rm Id} - \pi^{(m)} \right) \phi \in \ell^1$.
Thus, $\phi$ is represented by $\phi=\phi^{(m)}+\phi^{(\infty)}$.
%
%
For example, with this notation, the discrete convolution in \eqref{eq:linearized_problem} can be expanded as
\begin{align*}
(\ba(t) * b(t))_{k} &=\left(\ba(t)*b^{(m)}(t)\right)_{k}+\left(\ba(t) * b^{(\infty)}(t)\right)_{k}
\\
&= \sum_{\substack{k_{1}+k_{2}=k\\\left|k_{1}\right| \leq N,~|k_{2}|\le m}} \ba_{k_{1}}(t) b_{k_{2}}(t)+\sum_{\substack{k_{1}+k_{2}=k\\\left|k_{1}\right| \leq N,~|k_{2}|>m}} \ba_{k_{1}}(t) b_{k_{2}}(t) \quad(k\in\Z ).
\end{align*}

As previously mentioned, given a step size $h>0$, our goal in this section is to show the existence of the evolution operator $U(t,s)$ of \eqref{eq:linearized_problem} by computing a constant $\bm{W_h}$ satisfying \eqref{eq:W_h_constant}. However, from a computational point of view, the formulation of the homogeneous non-autonomous linear IVP \eqref{eq:linearized_problem} is daunting due to the fact that for each fixed $k \in \Z$ the convolution term $\left(\ba\left(t\right)*b(t) \right)_k$ involves all Fourier modes $(b_j(t))_{j \in \Z}$.
In order to address this issue, we separate the equations by considering yet another homogeneous IVP  with respect to the sequence $c(t)=(c_k(t))_{k\in \Z }$
\begin{align}
\label{eq:linearizedeq_finite}
\frac{d}{dt}c_k(t)+\expig\left[k^2\omega^2c_k(t)-2\left(\ba\left(t\right)*c^{(m)}(t)\right)_k\right] & =0 \quad(|k|\le m)
\\[1mm]
\label{eq:linearizedeq_infinite}
\frac{d}{dt}c_k(t)+\expig\left[k^2\omega^2c_k(t)-2\left(\ba\left(t\right)*c^{(\infty)}(t)\right)_k\right] & =0\quad(|k|>m).
\end{align}
This new {\em decoupled} formulation, while not being equivalent to \eqref{eq:linearized_problem}, will be used to control the evolution operator associated to \eqref{eq:linearized_problem}. Denote by $C^{(m)}(t,s)$ and $C^{(\infty)}(t,s)$ the evolution operators of the $(2m+1)$-dimensional equation \eqref{eq:linearizedeq_finite} and the infinite dimensional equation \eqref{eq:linearizedeq_infinite}, respectively. At this point, the existence of the operator $C^{(\infty)}(t,s)$ is not guaranteed, but will be verified in Section~\ref{sec:evolution_operator}.
 We extend the action of the operator $C^{(m)}(t,s)$ (resp. $C^{(\infty)}(t,s)$) on $\ell^1$ by introducing the operator $\bar U^{(m)}(t,s)$ (resp. $\bar U^{(\infty)}(t,s)$) as follows. Given $\phi \in \ell^1$, define $\bar U^{(m)}(t,s):\ell^1 \to \ell^1$ and $\bar U^{(\infty)}(t,s):\ell^1 \to \ell^1$ by 
\begin{align}
\label{eq:bU_m_definition}
\left( \bar U^{(m)}(t,s) \phi \right)_k &= \begin{cases} \left( C^{(m)}(t,s) ( \phi_k)_{|k|\le m} \right)_k, & |k| \le m \\ 0, & |k| >m \end{cases}
\\
\label{eq:bU_infty_definition}
\left( \bar U^{(\infty)}(t,s) \phi \right)_k &= \begin{cases} 0 , & |k| \le m \\ \left( C^{(\infty)}(t,s)  ( \phi_k)_{|k| > m} \right)_k, & |k| > m.   \end{cases}
\end{align}
%

The proof of existence of the evolution operator $U(t,s)$ of the original linearized problem \eqref{eq:linearized_problem} and the explicit bound $\bm{W_h}$ satisfying \eqref{eq:W_h_constant} is presented in the following theorem. 
\begin{thm}\label{thm:sol_map}
	Let $s,t\in J=(0,h)$ satisfying $0\le s\le t\le h$ and let $\ba$ be fixed.
	Assume that there exists a constant $W_m>0$ such that
	\begin{equation} \label{eq:bound_W_m}
	\sup_{0\le s\le t\le h}\left\| \bU^{(m)}(t,s)\right\|_{B(\ell^1)}\le W_m.
	\end{equation}
	Assume that $C^{(\infty)}(t,s)$ exists and that $\bU^{(\infty)}(t,s)$ defined in \eqref{eq:bU_infty_definition} satisfies
	\begin{equation} \label{eq:assumption_existence_U_infty}
	\left\| \bU^{(\infty)}(t,s)\right\|_{B (\ell^1)}\le W^{(\infty)}(t,s)\bydef e^{-\mu_{m+1}(t-s)+2\int_{s}^{t}\|\ba(\tau)\|_{\ell^1}d\tau},
	\end{equation}
	where $\mu_{m+1} \bydef (m+1)^2\omega^2\cos\theta$.
	Define the constants $W_{\infty}\ge0$, $\bar{W}_{\infty}\ge0$,  $W_{\infty}^{\sup}>0$ by
	\begin{align}
	\label{eq:def_W_infty}
	W_\infty &\bydef \frac{e^{\left(2 \|\ba\|_{X}-\mu_{m+1}\right)h}-1}{2\|\ba\|_{X}-\mu_{m+1}}
	\\[1mm]
	\label{eq:def_barW_infty}
	\bar{W}_{\infty} & \bydef
	\frac{W_\infty-h }{2\|\ba\|_{X}-\mu_{m+1}}\\
	\label{eq:def_W_infty_sup}
	W_{\infty}^{\sup} & \bydef \begin{cases}
	1, & \mu_{m+1} \ge 2\|\ba\|_X\\
	e^{(\mu_{m+1} - 2\|\ba\|_X)h}, & \mu_{m+1}< 2\|\ba\|_X,
	\end{cases}
	\end{align}
	respectively.
	%
	Define $\ba^{(\bm{s})}(t) \in \ell^1$ component-wise by
	\[
	\ba_k^{(\bm{s})}(t) = \begin{cases} 0,  & |k|\le N \text{ and } k = 0 \\ \ba_k(t), & |k|\le N \text{ and } k \ne 0 \\ 0, & |k|>N. \end{cases}
	\]
	If 
	\begin{equation} \label{eq:kappa_condition}
	\kappa\bydef 1-4W_m\bar{W}_\infty\|\ba^{(\bm{s})} \|_X^2>0,
	\end{equation}
	then the evolution operator $U(t,s)$ exists and the following estimate holds 
	\[
	\sup_{0\le s\le t\le h}\|U(t,s)\phi\|_{\ell^1}\le \bm{W_h} \|\phi\|_{\ell^1},\quad\forall\phi\in\ell^1,
	\]
	where
	\begin{equation} \label{eq:definition_of_W_h}
	\bm{W_h} \bydef\left\|\begin{bmatrix}
	W_m\kappa^{-1} & 2W_mW_\infty\|\ba^{(\bm{s})} \|_X\kappa^{-1}\\
	2W_mW_\infty\|\ba^{(\bm{s})} \|_X\kappa^{-1} & W_{\infty}^{\sup}+4 W_mW_\infty^{2}\|\ba^{(\bm{s})} \|_{X}^{2}\kappa^{-1}
	\end{bmatrix}\right\|_1.
	\end{equation}
\end{thm}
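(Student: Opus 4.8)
The plan is to reconstruct the evolution operator $U(t,s)$ of the coupled problem \eqref{eq:linearized_problem} from the two decoupled operators $\bar U^{(m)}(t,s)$ and $\bar U^{(\infty)}(t,s)$ by a Duhamel-type fixed point argument, and then to read off $\bm{W_h}$ from the resulting $2\times 2$ system of norm estimates. First I would write \eqref{eq:linearized_problem} as the perturbation of the decoupled system \eqref{eq:linearizedeq_finite}--\eqref{eq:linearizedeq_infinite}: if $b$ solves \eqref{eq:linearized_problem}, then splitting $b = b^{(m)} + b^{(\infty)}$ and comparing with the decoupled equations, the finite part $b^{(m)}$ satisfies \eqref{eq:linearizedeq_finite} with the extra forcing $2\expig\,\pi^{(m)}\!\big(\ba * b^{(\infty)}\big)$, and the tail part $b^{(\infty)}$ satisfies \eqref{eq:linearizedeq_infinite} with the extra forcing $2\expig\,(\mathrm{Id}-\pi^{(m)})\big(\ba * b^{(m)}\big)$. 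Crucially, in both cross terms only the sequence $\ba^{(\bm s)}$ appears: in the finite-to-tail coupling the convolution $\ba * b^{(m)}$ evaluated at $|k|>m \ge N$ never sees $\ba_0$ or modes $|k_1|>N$, and symmetrically for the tail-to-finite coupling one may replace $\ba$ by $\ba^{(\bm s)}$ up to terms that vanish — this is exactly why the norm bounds below feature $\|\ba^{(\bm s)}\|_X$ rather than $\|\ba\|_X$. Applying the variation-of-constants formula with the decoupled evolution operators as the ``unperturbed'' flows yields the coupled integral system
\begin{align*}
b^{(m)}(t) &= \bar U^{(m)}(t,s)\phi^{(m)} + 2\int_s^t \bar U^{(m)}(t,\tau)\,\expmig[-\theta]^{0}\,\pi^{(m)}\big(\ba^{(\bm s)}(\tau)*b^{(\infty)}(\tau)\big)\,d\tau,\\
b^{(\infty)}(t) &= \bar U^{(\infty)}(t,s)\phi^{(\infty)} + 2\int_s^t \bar U^{(\infty)}(t,\tau)\,(\mathrm{Id}-\pi^{(m)})\big(\ba^{(\bm s)}(\tau)*b^{(m)}(\tau)\big)\,d\tau.
\end{align*}

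Next I would set up this system as a fixed point in $X\times X = C(J;\ell^1)\times C(J;\ell^1)$ and bound each term using \eqref{eq:bound_W_m}, \eqref{eq:assumption_existence_U_infty}, the Banach algebra property \eqref{eq:banach_algebra}, and $|\expig|=1$. The key quantitative inputs are: (i) $\|\bar U^{(m)}(t,s)\|_{B(\ell^1)}\le W_m$; (ii) for the single tail integral, $\int_s^t \|\bar U^{(\infty)}(t,\tau)\|\,\|\ba^{(\bm s)}(\tau)\|\,d\tau \le \|\ba^{(\bm s)}\|_X \int_s^t e^{-\mu_{m+1}(t-\tau)+2\int_\tau^t\|\ba\|_{\ell^1}}d\tau \le \|\ba^{(\bm s)}\|_X\, W_\infty$, giving the off-diagonal weight $W_\infty\|\ba^{(\bm s)}\|_X$; (iii) the double tail integral $\int_s^t\!\!\int_s^\tau$ produces $W_\infty$ twice and yields $\bar W_\infty$ after one integration, producing the self-coupling weight $4W_m\bar W_\infty\|\ba^{(\bm s)}\|_X^2$, whose smallness is precisely the hypothesis $\kappa>0$; and (iv) $\|\bar U^{(\infty)}(t,s)\|\le W_\infty^{\sup}$ uniformly on $\mathcal S_h$, obtained by bounding $W^{(\infty)}(t,s)\le e^{(2\|\ba\|_X-\mu_{m+1})(t-s)_+}$ and using $0\le t-s\le h$. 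Substituting the $b^{(\infty)}$ equation into the $b^{(m)}$ equation (or vice versa) and solving the resulting contraction for $b^{(m)}$ under the condition $4W_m\bar W_\infty\|\ba^{(\bm s)}\|_X^2<1$ gives $\|b^{(m)}\|_X \le \kappa^{-1}\big(W_m\|\phi^{(m)}\|_{\ell^1} + 2W_mW_\infty\|\ba^{(\bm s)}\|_X\|\phi^{(\infty)}\|_{\ell^1}\big)$; feeding this back into the $b^{(\infty)}$ equation gives $\|b^{(\infty)}\|_X \le W_\infty^{\sup}\|\phi^{(\infty)}\|_{\ell^1} + 2W_\infty\|\ba^{(\bm s)}\|_X\|b^{(m)}\|_X$. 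Writing $(\|\phi^{(m)}\|_{\ell^1},\|\phi^{(\infty)}\|_{\ell^1})^\top$ and $(\|b^{(m)}\|_X,\|b^{(\infty)}\|_X)^\top$ as vectors, these two inequalities are exactly $\|b\|\le M\,\|\phi\|$ with $M$ the $2\times 2$ matrix in \eqref{eq:definition_of_W_h}; since $\|\phi^{(m)}\|_{\ell^1}+\|\phi^{(\infty)}\|_{\ell^1}=\|\phi\|_{\ell^1}$ and $\|b(t)\|_{\ell^1}\le\|b^{(m)}\|_X+\|b^{(\infty)}\|_X$, taking the $\ell^1\to\ell^1$ operator norm of $M$ — i.e. the maximum absolute column sum $\|M\|_1$ — delivers $\sup_{\mathcal S_h}\|U(t,s)\phi\|_{\ell^1}\le \bm{W_h}\|\phi\|_{\ell^1}$.

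Finally, the existence of $U(t,s)$ itself: the contraction argument above constructs, for each $\phi\in\ell^1$ and each $s$, a unique solution $(b^{(m)},b^{(\infty)})\in X\times X$ of the integral system; setting $U(t,s)\phi \bydef b^{(m)}(t)+b^{(\infty)}(t)$ defines a bounded linear operator on $\ell^1$ (linearity is inherited from the linearity of the integral system in $\phi$), and one checks the evolution/cocycle identity $U(t,\tau)U(\tau,s)=U(t,s)$ and $U(s,s)=\mathrm{Id}$ from uniqueness of solutions together with the corresponding properties of $\bar U^{(m)}$ and $\bar U^{(\infty)}$; differentiating the integral system recovers \eqref{eq:linearized_problem}, so $b=U(\cdot,s)\phi$ is genuinely its solution. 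I expect the main obstacle to be bookkeeping the cross terms carefully enough to justify replacing $\ba$ by $\ba^{(\bm s)}$ in both couplings (the $|k|>m\ge N$ index constraint has to be tracked exactly, including the role of $\ba_0$ and the symmetry $(\ba * b)_k$ vs.\ the zero modes), and controlling the nested double-integral term to get precisely the constant $\bar W_\infty$ rather than a cruder bound — getting the sharp $\kappa$ is what makes the hypothesis \eqref{eq:kappa_condition} usable in the rigorous-numerics verification. The estimates on $W_\infty^{\sup}$, $W_\infty$, $\bar W_\infty$ from \eqref{eq:assumption_existence_U_infty} are elementary one-dimensional integral computations and should pose no real difficulty.
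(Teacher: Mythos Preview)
Your proposal is correct and follows essentially the same route as the paper's proof: split $b=b^{(m)}+b^{(\infty)}$, write each piece via variation of constants against $\bar U^{(m)}$ and $\bar U^{(\infty)}$ with the cross-coupling forcings, replace $\ba$ by $\ba^{(\bm s)}$ in those forcings, substitute the tail estimate into the finite one and close under $\kappa>0$ (using Lemma~\ref{lem:ineqW_infty} for the constants $W_\infty,\bar W_\infty,W_\infty^{\sup}$), feed back, and read off the $2\times2$ matrix norm. One small correction: your parenthetical ``$|k|>m\ge N$'' is wrong---in the applications one has $m<N$---and it is also unnecessary, since the exclusion of $\ba_0$ follows simply from $k_1=k-k_2$ with $|k|>m$, $|k_2|\le m$ forcing $k_1\ne 0$ (and symmetrically for the other coupling), with no relation between $m$ and $N$ required.
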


After the proof of Theorem~\ref{thm:sol_map}, we introduce in Section~\ref{sec:fundamental_solution} a rigorous computational method based on Chebyshev series to 
obtain the finite dimensional evolution operator $C^{(m)}(t,s)$, which will directly yield $\bU^{(m)}(t,s)$. We show how this helps computing $W_m>0$ satisfying \eqref{eq:bound_W_m}. Section \ref{sec:evolution_operator} is devoted to show the existence of $C^{(\infty)}(t,s)$ (and hence the existence of $\bU^{(\infty)}(t,s)$) and to show that the hypothesis \eqref{eq:assumption_existence_U_infty} holds. The proof of Theorem~\ref{thm:sol_map} uses the following elementary result.

\begin{lem} \label{lem:ineqW_infty}
Consider the constants $W_{\infty}\ge0$, $\bar{W}_{\infty}\ge0$ and  $W_{\infty}^{\sup}>0$ as defined in \eqref{eq:def_W_infty}, \eqref{eq:def_barW_infty} and \eqref{eq:def_W_infty_sup}, respectively. 
Then $W^{(\infty)}$, defined in \eqref{eq:assumption_existence_U_infty}, obeys the following inequalities:

\begin{align}
\label{eq:ineqW_infty_sup}
\sup_{0 \leq s \leq t \leq h}	W^{(\infty)}(t,s)  & \le W_{\infty}^{\sup}
\\
\label{eq:ineqW_infty}
\sup _{0 \leq s \leq t \leq h} \int_{s}^{t} W^{(\infty)}(\tau, s) d \tau ~,~\sup _{0 \leq s \leq t \leq h} \int_{s}^{t} W^{(\infty)}(t, \tau) d \tau &\le W_\infty
\\
\label{eq:ineqbarW_infty}
\sup _{0 \leq s \leq t \leq h}\int_{s}^{t} \int_{s}^{\tau} W^{(\infty)}(\tau, \sigma) d \sigma d \tau & \le  \bar{W}_{\infty}.
\end{align}
\end{lem}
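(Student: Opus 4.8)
The plan is to reduce all three claims to one–dimensional estimates for the scalar exponential $u\mapsto e^{\lambda u}$, where $\lambda\bydef 2\|\ba\|_X-\mu_{m+1}$. The only analytic input needed is the crude bound $\int_s^t\|\ba(\tau)\|_{\ell^1}\,d\tau\le(t-s)\|\ba\|_X$, which holds for $0\le s\le t\le h$ by definition of the norm on $X$; since $r\mapsto e^r$ is increasing this gives
\[
W^{(\infty)}(t,s)=e^{-\mu_{m+1}(t-s)+2\int_s^t\|\ba(\tau)\|_{\ell^1}\,d\tau}\le e^{\lambda(t-s)},\qquad 0\le s\le t\le h.
\]
A direct computation of the iterated integrals of the exponential then shows that the three constants appearing on the right-hand sides are exactly
\[
W_\infty=\int_0^h e^{\lambda u}\,du,\qquad \bar W_\infty=\int_0^h\!\!\int_0^\tau e^{\lambda\sigma}\,d\sigma\,d\tau,\qquad W_\infty^{\sup}=\sup_{0\le u\le h}e^{\lambda u},
\]
in agreement with \eqref{eq:def_W_infty}, \eqref{eq:def_barW_infty} and \eqref{eq:def_W_infty_sup}: for the last one the supremum is attained at $u=0$ (value $1$) when $\lambda\le0$, that is $\mu_{m+1}\ge 2\|\ba\|_X$, and at $u=h$ (value $e^{\lambda h}$) when $\lambda>0$; the closed forms for $W_\infty$ and $\bar W_\infty$ in the degenerate case $\lambda=0$ are read off by continuity as $h$ and $h^2/2$. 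Phrasing the constants as integrals rather than as the quotients $(e^{\lambda h}-1)/\lambda$ is deliberate, since it makes the estimates below uniform in the sign of $\lambda$.

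Granting this reduction, the three inequalities follow by elementary monotonicity. For \eqref{eq:ineqW_infty_sup} one has $W^{(\infty)}(t,s)\le e^{\lambda(t-s)}\le\sup_{0\le u\le h}e^{\lambda u}=W_\infty^{\sup}$ because $0\le t-s\le h$. For \eqref{eq:ineqW_infty}, after the substitution $u=\tau-s$,
\[
\int_s^t W^{(\infty)}(\tau,s)\,d\tau\le\int_s^t e^{\lambda(\tau-s)}\,d\tau=\int_0^{t-s}e^{\lambda u}\,du\le\int_0^h e^{\lambda u}\,du=W_\infty,
\]
the last inequality using $t-s\le h$ together with the nonnegativity of $e^{\lambda u}$; the bound on $\int_s^t W^{(\infty)}(t,\tau)\,d\tau$ is identical after the substitution $u=t-\tau$. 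For \eqref{eq:ineqbarW_infty}, doing the inner integral first and then substituting $v=\tau-s$ in the outer one,
\[
\int_s^t\!\!\int_s^\tau W^{(\infty)}(\tau,\sigma)\,d\sigma\,d\tau\le\int_s^t\!\!\int_0^{\tau-s}e^{\lambda u}\,du\,d\tau=\int_0^{t-s}\!\!\int_0^v e^{\lambda u}\,du\,dv\le\int_0^h\!\!\int_0^v e^{\lambda u}\,du\,dv=\bar W_\infty,
\]
the final step being valid because the outer integrand $v\mapsto\int_0^v e^{\lambda u}\,du$ is nonnegative and $t-s\le h$.

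I do not anticipate any real obstacle here: once the pointwise bound $W^{(\infty)}(t,s)\le e^{\lambda(t-s)}$ is in place, the lemma is a chain of elementary integrations and ``extend the upper limit of integration to $h$'' steps, each legitimate because the relevant integrand ($e^{\lambda u}$ at the first level, $\int_0^v e^{\lambda u}\,du$ at the second) is nonnegative. The only points worth a moment's care are bookkeeping ones: keeping every estimate uniform in $\operatorname{sign}(\lambda)$ — which is precisely why I would work throughout with $\int_0^r e^{\lambda u}\,du$ rather than with $(e^{\lambda r}-1)/\lambda$ — and interpreting the $\lambda=0$ instances of \eqref{eq:def_W_infty}–\eqref{eq:def_barW_infty} by continuity; both are routine.
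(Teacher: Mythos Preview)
Your proof is correct and follows essentially the same route as the paper: both reduce to the pointwise bound $W^{(\infty)}(t,s)\le e^{\lambda(t-s)}$ via $\int_s^t\|\ba(\tau)\|_{\ell^1}\,d\tau\le(t-s)\|\ba\|_X$ and then perform the obvious integrations and sup. Your only addition is to phrase $W_\infty$ and $\bar W_\infty$ as integrals so that the degenerate case $\lambda=0$ is handled uniformly, a point the paper's quotient expressions leave implicit.
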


\begin{proof}
First, note that from \eqref{eq:assumption_existence_U_infty}
\begin{align*}
	\sup_{0 \leq s \leq t \leq h}	W^{(\infty)}(t,s) 
	&= \sup_{0 \leq s \leq t \leq h}e^{-\mu_{m+1}(t-s)+2\int_{s}^{t}\|\ba(\tau)\|_{\ell^1}d \tau}\\
	& \le \sup_{0 \leq s \leq t \leq h}e^{-(\mu_{m+1}-2\|\ba\|_{X})(t-s)}\\
	& \le \begin{cases}
	1, & \mu_{m+1} \ge 2\|\ba\|_X\\
	e^{(2\|\ba\|_X - \mu_{m+1})h}, & \mu_{m+1}< 2\|\ba\|_X,
	\end{cases}\\
	& = W_{\infty}^{\sup}.
\end{align*}
Second, note that
\begin{align*}
\sup _{0 \leq s \leq t \leq h} \int_{s}^{t} W^{(\infty)}(\tau, s) d \tau
&=\sup _{0 \leq s \leq t \leq h}\left(\int_{s}^{t} e^{-\mu_{m+1}(\tau-s)+2\int_{s}^{\tau}\|\ba(\sigma)\|_{\ell^1}d\sigma}dr\right)\\
&\le\sup _{0 \leq s \leq t \leq h}\left(\int_{s}^{t} e^{(2\|\ba\|_X-\mu_{m+1})(\tau-s)}dr\right)\\
&=\sup _{0 \leq s \leq t \leq h}\left(\frac{e^{\left(2 \|\ba\|_{X}-\mu_{m+1}\right)(t-s)}-1}{2\|\ba\|_{X}-\mu_{m+1}}\right)\\
&\le \frac{e^{\left(2 \|\ba\|_{X}-\mu_{m+1}\right)h}-1}{2\|\ba\|_{X}-\mu_{m+1}} = W_\infty
\end{align*}
and that
\begin{align*}
\sup _{0 \leq s \leq t \leq h} \int_{s}^{t} W^{(\infty)}(t, \tau) d \tau
&=\sup _{0 \leq s \leq t \leq h}\left(\int_{s}^{t} e^{-\mu_{m+1}(t-\tau)+2\int_{\tau}^{t}\|\ba(\sigma)\|_{\ell^1}d\sigma}dr\right)\\
&=\sup _{0 \leq s \leq t \leq h}\left(\int_{s}^{t} e^{(2\|\ba\|_X-\mu_{m+1})(t-\tau)}dr\right)\\
&=\sup _{0 \leq s \leq t \leq h}\left(\frac{e^{\left(2 \|\ba\|_{X}-\mu_{m+1}\right)(t-s)}-1}{2\|\ba\|_{X}-\mu_{m+1}}\right)\\
&\le \frac{e^{\left(2 \|\ba\|_{X}-\mu_{m+1}\right)h}-1}{2\|\ba\|_{X}-\mu_{m+1}}=W_\infty.
\end{align*}
Third, 
\begin{align*}
\int_{s}^{t} \int_{s}^{\tau} W^{(\infty)}(\tau, \sigma) d \sigma d \tau
&=\int_{s}^{t} \frac{e^{\left(2 \|\ba\|_{X}-\mu_{m+1}\right)(\tau-s)}-1}{2\|\ba\|_{X}-\mu_{m+1}}d \tau\\
&=\left(\frac{1}{2\|\ba\|_{X}-\mu_{m+1}}\right)\left[\frac{e^{\left(2 \|\ba\|_{X}-\mu_{m+1}\right)(\tau-s)}}{2\|\ba\|_{X}-\mu_{m+1}}-\tau\right]_{\tau=s}^{\tau=t}\\
&=\left(\frac{1}{2\|\ba\|_{X}-\mu_{m+1}}\right)\left[\frac{e^{\left(2 \|\ba\|_{X}-\mu_{m+1}\right)(t-s)}-1}{2\|\ba\|_{X}-\mu_{m+1}}-(t-s)\right],
\end{align*}
and hence, it follows that
\[
\sup _{0 \leq s \leq t \leq h}\int_{s}^{t} \int_{s}^{\tau} W^{(\infty)}(\tau, \sigma) d \sigma d \tau\le \left(\frac{1}{2\|\ba\|_{X}-\mu_{m+1}}\right)\left(\frac{e^{\left(2 \|\ba\|_{X}-\mu_{m+1}\right)h}-1}{2\|\ba\|_{X}-\mu_{m+1}}-h\right) = \bar{W}_{\infty}. \qedhere
\]

\end{proof}

\begin{proof}[Proof of Theorem~\ref{thm:sol_map}]
First note that for $|k|\le m$, the system of differential equations \eqref{eq:linearized_problem} is described by the non-homogeneous equation
\begin{equation} \label{eq:non-homogeneous-finite-system}
\frac{d}{dt}b_k(t)+\expig\left[k^2\omega^2b_k(t)-2\left(\ba\left(t\right)*b^{(m)}(t) \right)_k\right]=2\expig\left(\ba\left(t\right)*b^{(\infty)} \right)_k\quad(|k|\le m)
\end{equation}
with the initial condition $b_k(s)=\phi_k$ for $|k| \le m$.
Consider the homogeneous equation \eqref{eq:linearizedeq_finite} and denote by $C^{(m)}(t,s)$ the solution of the variational problem.
We show in Section~\ref{sec:fundamental_solution} how to rigorously compute $C^{(m)}(t,s)$.
As before, let $\bU^{(m)}(t,s)$ be the extension of the action of the operator $C^{(m)}(t,s)$ on $\ell^1$.
Using the evolution operator $\bU^{(m)}(t,s):\ell^1 \to \ell^1$, we can integrate the system \eqref{eq:non-homogeneous-finite-system} to obtain 
the integral equation
\begin{equation} \label{eq:b^{m}_integral_equation}
b^{(m)}(t)=\bU^{(m)}(t, s) \phi^{(m)} + 2 e^{\im \theta} \int_{s}^{t} \bU^{(m)}(t, \tau)  \pi^{(m)}\left(\ba(\tau) * b^{(\infty)}(\tau)\right) d \tau.
\end{equation}
Here, for $|k|\le m$,
\[
\left(\ba * b^{(\infty)}\right)_{k} = \sum_{\substack{k_{1}+k_{2}=k\\\left|k_{1}\right| \leq N, |k_{2}|>m}} \ba_{k_{1}} b_{k_{2}} = \sum_{\substack{k_{1} \neq 0\\\left|k_{1}\right| \leq N,~|k-k_1|>m}} \ba_{k_{1}} b_{k-k_{1}}= \sum_{\left|k_{1}\right| \leq N,~|k-k_1|>m} \ba^{(\bm{s})}_{k_{1}} b_{k-k_{1}}
\]
holds. Combining \eqref{eq:bound_W_m} and \eqref{eq:b^{m}_integral_equation}, and using the property \eqref{eq:banach_algebra}, it follows that
\begin{equation}\label{eq:zero-mode}
\left\|b^{(m)}(t)\right\|_{\ell^1} \leq W_m\left\|\phi^{(m)}\right\|_{\ell^1}+2 W_m\int_{s}^{t}\left\| \ba^{(\bm{s})}(\tau)\right\|_{\ell^1}\left\|b^{(\infty)}(\tau)\right\|_{\ell^1} d \tau.
\end{equation}

Next, for the case of $|k|>m$, we rewrite the system of differential equations \eqref{eq:linearized_problem} as
\begin{equation} \label{eq:non-homogeneous-infinite-system}
	\frac{d}{dt}b_k(t)+\expig\left[k^2\omega^2b_k(t)-2\left(\ba\left(t\right)*b^{(\infty)} \right)_k\right]= 2\expig\left(\ba\left(t\right)*b^{(m)} \right)_k\quad(|k|>m)
\end{equation}
with the initial condition $b_k(s)=\phi_k$ for $|k|>m$. Note that by assumption the evolution operator $C^{(\infty)}(t,s)$ associated to \eqref{eq:linearizedeq_infinite} exists. Define $\bU^{(\infty)}(t,s)$ as in \eqref{eq:bU_infty_definition}. 
Using that operator, the system \eqref{eq:non-homogeneous-infinite-system} can be re-written as
\begin{equation} \label{eq:b^{infty}_integral_equation}
b^{(\infty)}(t)=\bU^{(\infty)}(t, s) \phi^{(\infty)} + 2 \expig \int_{s}^{t} \bU^{(\infty)}(t, \tau) ({\rm Id} -\pi^{(m)})\left(\ba(\tau)*b^{(m)}(\tau)\right) d \tau.
\end{equation}
For $|k|>m$, the following holds
\[
\left(\ba * b^{(m)}\right)_{k} = \sum_{\substack{k_{1}+k_{2}=k\\\left|k_{1}\right| \leq N, |k_{2}|\le m}} \ba_{k_{1}} b_{k_{2}} = \sum_{\substack{k_{1} \neq 0\\\left|k_{1}\right| \leq N,~|k-k_1|\le m}} \ba_{k_{1}} b_{k-k_{1}}= \sum_{\left|k_{1}\right| \leq N,~|k-k_1|\le m} \ba^{(\bm{s})}_{k_{1}} b_{k-k_{1}}.
\]
Combining \eqref{eq:assumption_existence_U_infty}, \eqref{eq:b^{infty}_integral_equation} and using the property \eqref{eq:banach_algebra}, it follows that
\begin{equation}\label{eq:tail-mode}
\left\|b^{(\infty)}(t)\right\|_{\ell^1}\le W^{(\infty)}(t,s)\left\|\phi^{(\infty)}\right\|_{\ell^1}+2\int_{s}^{t} W^{(\infty)}(t, \tau)\left\|\ba^{(\bm{s})}(\tau)\right\|_{\ell^1}\left\|b^{(m)}(\tau)\right\|_{\ell^1} d \tau.
\end{equation}
Plugging \eqref{eq:tail-mode} into \eqref{eq:zero-mode}, and using the inequalities \eqref{eq:ineqW_infty} and \eqref{eq:ineqbarW_infty} from Lemma~\ref{lem:ineqW_infty},
we have
\begin{align}\nonumber
\left\|b^{(m)}(t)\right\|_{\ell^1}
&\le W_m\left\|\phi^{(m)}\right\|_{\ell^1}+2 W_m\int_{s}^{t}\|\ba^{(\bm{s})}(\tau)\|_{\ell^1}\Bigg\{
W^{(\infty)}(\tau, s)\left\|\phi^{(\infty)}\right\|_{\ell^1}\\\nonumber
&\hphantom{=}\quad +2 \int_{s}^{\tau} W^{(\infty)}(\tau, \sigma)\|\ba^{(\bm{s})}(\sigma)\|_{\ell^1}\|b^{(m)}(\sigma)\|_{\ell^1}d\sigma
\Bigg\}d \tau\\\nonumber
&=W_m\left\|\phi^{(m)}\right\|_{\ell^1}+\left(2 W_m\int_{s}^{t}\|\ba^{(\bm{s})}(\tau)\|_{\ell^1} W^{(\infty)}(\tau, s) d \tau\right)\left\|\phi^{(\infty)}\right\|_{\ell^1}\\\nonumber
&\hphantom{=}\quad +4 W_m\int_{s}^{t}\|\ba^{(\bm{s})}(\tau)\|_{\ell^{1}}\left(\int_{s}^{\tau}\|\ba^{(\bm{s})}(\sigma)\|_{\ell^1} W^{(\infty)}(\tau, \sigma)\left\|b^{(m)}(\sigma)\right\|_{\ell^1} d \sigma\right)d \tau\\
&\le W_m\left\|\phi^{(m)}\right\|_{\ell^1}+2 W_mW_\infty\|\ba^{(\bm{s})}\|_{X}\left\|\phi^{(\infty)}\right\|_{\ell^1}
+4 W_m\bar{W}_{\infty}\|\ba^{(\bm{s})}\|_{X}^{2}
\left\|b^{(m)}\right\|_{X}.
\label{eq:zero-mode_estimate}
\end{align}

By assumption \eqref{eq:kappa_condition}, $\kappa = 1-4W_m\bar{W}_\infty\|\ba^{(\bm{s})} \|_X^2>0$ and using \eqref{eq:zero-mode_estimate} yields that
\begin{equation}\label{eq:b_m_norm}
	\|b^{(m)}\|_X\le \frac{W_m\left\|\phi^{(m)}\right\|_{\ell^1}+2 W_mW_\infty\|\ba^{(\bm{s})} \|_{X}\left\|\phi^{(\infty)}\right\|_{\ell^1}}{\kappa},
\end{equation}
which guarantees the existence of the solution of the finite part of \eqref{eq:linearized_problem}. 
Using the inequalities \eqref{eq:ineqW_infty_sup} and \eqref{eq:ineqW_infty} in Lemma~\ref{lem:ineqW_infty} and \eqref{eq:b_m_norm}, the tail mode \eqref{eq:tail-mode} is bounded by
\begin{align}\nonumber
	\left\|b^{(\infty)}\right\|_{X} &\le W_{\infty}^{\sup}\left\|\phi^{(\infty)}\right\|_{\ell^1}+2W_\infty\left\|\ba^{(\bm{s})} \right\|_X\left\|b^{(m)}\right\|_X\\
	&\le 2W_mW_\infty\|\ba^{(\bm{s})} \|_X\kappa^{-1}\left\|\phi^{(m)}\right\|_{\ell^1}+\left(W_{\infty}^{\sup}+4 W_mW_\infty^{2}\|\ba^{(\bm{s})} \|_{X}^{2}\kappa^{-1}\right)\left\|\phi^{(\infty)}\right\|_{\ell^{1}}.\label{eq:b_inf_norm}
\end{align}

Finally, \eqref{eq:b_m_norm} and \eqref{eq:b_inf_norm} yield
\begin{align*}
\|b\|_X
&\le\left\|b^{(m)}\right\|_{X}+\left\|b^{(\infty)}\right\|_{X}\\
&=\left\|\begin{bmatrix}
\left\|b^{(m)}\right\|_{X}\\[2mm]\left\|b^{(\infty)}\right\|_{X}
\end{bmatrix}\right\|_1\\
&\le\left\|\begin{bmatrix}
W_m\kappa^{-1} & 2W_mW_\infty\|\ba^{(\bm{s})} \|_X\kappa^{-1}\\
2W_mW_\infty\|\ba^{(\bm{s})} \|_X\kappa^{-1} & W_{\infty}^{\sup}+4 W_mW_\infty^{2}\|\ba^{(\bm{s})} \|_{X}^{2}\kappa^{-1}
\end{bmatrix}\begin{bmatrix}
\|\phi^{(m)}\|_{\ell^1}\\\|\phi^{(\infty)}\|_{\ell^1}
\end{bmatrix}\right\|_1\\
&\le \bm{W_h} \|\phi\|_{\ell^1}. \qedhere
\end{align*}
\end{proof}
\begin{rem}\label{rem:bootstrap}
	For arbitrary forcing term $g\in X$,  $b=\mathscr{A}g$ defined in \eqref{eq:definition_of_scriptA} solves \eqref{eq:linearized_problem_non_homogeneous} in the sense of classical solution (see, e.g., \cite{pazy1983semigroups}). This implies $b\in C^1(J;D(L))$. Then, the solution map operator $\mathscr{A}$ has a bootstrap property from $X$ to $C^1(J;D(L))$.
\end{rem}

Having derived a computable formulation in \eqref{eq:definition_of_W_h} for the constant $\bm{W_h}>0$ which provides a uniform bound for the norm of evolution operator $U(\cdot,\cdot)$ over $\mathcal{S}_h$, we now turn to the question of computing the bound $W_m$, which is required to defined $\bm{W_h}$.

\subsection{Computation of the bound \boldmath$W_m$\unboldmath}\label{sec:fundamental_solution}

The goal of this section is to present a rigorous computational approach to obtain the constant $W_m$ satisfying 
\eqref{eq:bound_W_m}, that is a uniform bound for the operator norm of $\bU^{(m)}$ over the simplex $\mathcal{S}_h$. From \eqref{eq:bU_m_definition}, defining $\bU^{(m)}(t,s)$ boils down to the computation of $C^{(m)}(t,s)$, that is the evolution operators of the $(2m+1)$-dimensional equation \eqref{eq:linearizedeq_finite}, which can be written component-wise as 
\begin{equation}\label{eq:variational_prob}
\frac{d}{d t} c_{k,j}(t) = \expig\left[-k^2\omega^2c_{k,j}(t)+2\left(\ba\left(t\right)*c^{(m)}_{j}(t) \right)_k\right], \quad c_{k,j}(s)=\delta_{k,j}\quad(|k|,|j|\le m), 
\end{equation}
for $0\le s\le t\le h$, and where $\delta_{k,j}$ denotes the Kronecker's delta.
Denote by $A(t)$ the matrix representation of the right-hand side of \eqref{eq:variational_prob} acting on the coefficients $\left(c_{k,j}(t,s)\right)_{|k|,|j|\le m}$.

The evolution matrix $C^{(m)}(t,s)$ can be decomposed as 
\begin{equation} \label{eq:Cm_decomposition}
C^{(m)}(t,s)=\Phi(t)\Psi(s) \bydef \Phi(t)\Phi(s)^{-1},
\end{equation}
where $\Phi(t) \in M_{2m+1}(\C )$ is the principal fundamental solution and solves 
\begin{equation} \label{eq:finite_dim_variational_problem}
\frac{d}{d t} \Phi(t)=A(t){\Phi}(t),\quad\Phi(0)={\rm Id}, \quad t \in [0,h],
\end{equation}
and where $\Psi(s)=\Phi(s)^{-1}$ is the solution of the adjoint problem
\begin{equation} \label{eq:finite_dim_adjoint_variational_problem}
\frac{d}{d s} \Psi(s)=-{\Psi}(s)A(s),\quad\Psi(0)={\rm Id}, \quad s \in [0,h].
\end{equation}

Having introduce the set-up, the computation of $W_m$ is twofold. First, using the approach of \cite{MR3148084}, we use the tools of rigorous numerics and Chebyshev series expansion to compute the fundamental matrix solutions $\Phi(t)$ and $\Psi(t)$ satisfying \eqref{eq:finite_dim_variational_problem} and \eqref{eq:finite_dim_adjoint_variational_problem}, respectively. Second, using interval arithmetic and the Chebyshev series representations of $\Phi(t)$ and $\Psi(t)$, we obtain $W_m$ such that 
\begin{equation} \label{eq:evaluating_W_m}
\sup _{t\in [0,h]} \|\Phi(t)\|_1 \cdot \sup _{s\in [0,h]} \|\Psi(s)\|_1\le W_m,
\end{equation}
where $\| \cdot \|_1$ represents the matrix norm induced by the $\ell^1$ norm on $\C^{2m+1}$. By construction, the constant $W_m$ obtained computationally in \eqref{eq:evaluating_W_m} satisfies \eqref{eq:bound_W_m}.
The remainder of this section is devoted to the computational method for obtaining the matrix solutions $\Phi(t)$ and $\Psi(t)$. 

\subsubsection{Rigorously computing \boldmath$\Phi(t)$\unboldmath~and~\boldmath$\Psi(t)$\unboldmath~via Chebyshev series}

The Chebyshev polynomials $T_k:[-1,1] \to \R$ ($k \ge 0$) are orthogonal polynomials defined by $T_0=1$, $T_1(t)=t$ and $T_{k+1}(t)=2tT_k(t) - T_{k-1}(t)$ for $k \ge 1$.  
Every Lipschitz continuous function $v:[-1,1]\to \R$ has a unique representation as an absolutely and uniformly convergent series $v(t) = \sum_{k=0}^{\infty}a_{k}T_{k}(t)$ (e.g. see \cite{MR3012510}). The more regular the function $v$ is, the faster the decay rate of its Chebyshev coefficients. 

Fixing a Fourier projection number $N$ and a Chebyshev projection number $n$, let us assume that we have numerically computed an approximate solution of the initial-boundary value problem \eqref{eq:CGL} in the form
\begin{equation}\label{eq:app_sol}
\bar u(t,x) = \sum_{|k| \le N} \ba_k(t) e^{\im k\omega x}, 
\qquad
\ba_k(t) = \ba_{0,k} + 2 \sum_{\ell=1}^{n-1} \ba_{\ell,k} T_{\ell}(t), 
\end{equation}
with $\omega=2\pi$. In practice, we perform this task by considering a Galerkin approximation of \eqref{eq:CGL_ode} of size $2N+1$ and using MATLAB software system Chebfun (e.g. see \cite{MR2767023}) to compute a solution of the IVP on the time interval $[0,h]$. Given this approximate solution $\ba(t) = \left( \ba_k(t) \right)_{k=-N}^N$, the variational problem \eqref{eq:finite_dim_variational_problem} consists of solving the homogeneous initial value problem (IVP) with respect to sequence $\left( c_k(t) \right)_{k=-m}^m$ satisfying (after rescaling the time interval $[0,h]$ to $[-1,1]$)
\begin{equation} \label{eq:rescale_ODEs}
\dot c_k = G_k(c) \bydef - \frac{h \expig}{2} \left[k^2\omega^2c_k-2\left(\ba*c^{(m)}\right)_k\right], \quad |k|\le m.
\end{equation}
Assume that the initial condition of \eqref{eq:rescale_ODEs} is given by $c_k(-1) = b_k$.
In practice, we will solve rigorously $2m+1$ IVPs with initial conditions $(b_k)_{k=-m}^m = {\rm e}_j \in \mathbb C^{2m+1}$ the canonical basis vectors. 
Rewriting the system \eqref{eq:rescale_ODEs} as an integral equation results in
\begin{equation} \label{eq:integral_equation_Fisher}
c_k(t) = b_k +  \int_{-1}^t G_k(c(s))~ds, \qquad |k| \le m, \quad t \in [-1,1].
\end{equation}
For each $k$, we expand $c_k(t)$ using a Chebyshev series, that is 
\begin{equation} \label{eq:a_k_Chebyshev_expansion}
c_k(t) = c_{0,k} + 2 \sum_{\ell \ge 1} c_{\ell,k} T_\ell(t) = c_{0,k} + 2 \sum_{\ell \ge 1} c_{\ell,k} \cos(\ell \theta) 
= \sum_{\ell \in \Z} c_{\ell,k} e^{\im \ell \theta},
\end{equation}
where $c_{-\ell,k} = c_{\ell,k}$ and $t = \cos(\theta)$. For each $|k| \le m$, we expand $G_k(c(t))$ using a Chebyshev series, that is 
\begin{equation} \label{eq:f_k(a)_Chebyshev_expansion}
G_k(c(t)) = \psi_{0,k}(c)  + 2 \sum_{\ell \ge 1} \psi_{\ell,k}(c) \cos(\ell \theta) 
=  \sum_{\ell \in \Z} \psi_{\ell,k}(c) e^{\im \ell \theta}.
\end{equation}
where
\[
\psi_{\ell,k}(c) = \lambda_k c_{\ell,k} + N_{\ell,k}(c).
\]
Letting $N_k(c) \bydef (N_{\ell,k}(c))_{\ell \ge 0}$, $\psi_k(c) \bydef (\psi_{\ell,k}(c))_{\ell \ge 0}$ and noting that $(\lambda_k c_k)_\ell = \lambda_k c_{\ell,k}$, we get that
\begin{equation} \label{eq:phi_expansion_general}
\psi_k(c) = \lambda_k c_k + N_k(c).
\end{equation}

\begin{rem} 
	Note that for problem \eqref{eq:rescale_ODEs},
	\[
	\lambda_k = - \frac{h \expig}{2} \omega^2 k^2 
	\]
	and that 
	\begin{equation} \label{eq:explicit_Nk}
	N_{\ell,k}(c) = h \expig \left(\ba*c^{(m)}\right)_{\ell,k} = h \expig \sum_{{\ell_1+\ell_2 = \ell \atop k_1 + k_2 = k} \atop |\ell_2| <n, |k_2| \le m} \ba_{\ell_1,k_1} c_{\ell_2,k_2}.
	\end{equation}
\end{rem}

Combining expansions \eqref{eq:a_k_Chebyshev_expansion} and \eqref{eq:f_k(a)_Chebyshev_expansion} leads to %
\[
\sum_{\ell \in \Z} c_{\ell,k} e^{\im \ell \theta} = c_k(t) = b_k+\int_{-1}^t G_k(c(s))~ds=  b_k+\int_{-1}^t \sum_{\ell \in \Z} \psi_{\ell,k}(c) e^{\im \ell \theta}~ds 
\]
and this results (e.g. see in \cite{MR3148084}) in solving $f=0$, where 
$f = \left(f_{\ell,k} \right)_{\ell,k}$ is given component-wise by
\[
f_{\ell,k}(c) = 
\begin{cases}
\displaystyle
c_{0,k} + 2 \sum_{j=1}^\infty (-1)^j c_{j,k} - b_k, & \ell=0, |k| \le m \\
\displaystyle
2\ell c_{\ell,k} + ( \psi_{\ell+1,k}(c) - \psi_{\ell-1,k}(c)), & \ell>0, |k| \le m.
\end{cases}
\]
Hence, for $\ell>0$ and $|k| \le m$, we aim at solving
\[
f_{\ell,k}(c)  = 
2\ell c_{\ell,k} +  \lambda_k ( c_{\ell+1,k} - c_{\ell-1,k})
+ ( N_{\ell+1,k}(c) - N_{\ell-1,k}(c)) = 0.
\]
Finally, the problem that we solve is $f=0$, where $f = \left(f_{\ell,k} \right)_{\ell,k}$ is given component-wise by
\[
f_{\ell,k}(c) \bydef 
\begin{cases}
\displaystyle
c_{0,k} + 2 \sum_{j=1}^\infty (-1)^j c_{j,k} - b_k, & \ell=0, |k| \le m \\
\displaystyle
- \lambda_k c_{\ell-1,k} + 2\ell c_{\ell,k} + \lambda_k c_{\ell+1,k}
+ (N_{\ell+1,k}(c) - N_{\ell-1,k}(c)), & \ell > 0 , |k| \le m.
\end{cases}
\]
Define the operators (acting on Chebyshev sequences) by
\begin{equation} \label{eq:tridiagonal_T}
\cT \bydef
\begin{pmatrix} 
0&0&0&0&0&\cdots\\
-1&0&1&0&\cdots&\ \\
0&-1&0&1&0&\cdots\\
\ &\ddots&\ddots&\ddots&\ddots&\ddots\\
\ &\dots&0&-1&0&1 \\
\ &\ &\dots&\ddots&\ddots&\ddots
\end{pmatrix},
\end{equation}
and
\begin{equation} \label{eq:Lambda}
\Lambda  \bydef
\begin{pmatrix} 
0&0&0&0&0&\cdots\\
0&2&0&0&\cdots&\ \\
0&0&4&0&0&\cdots\\
\ &\ddots&\ddots&\ddots&\ddots&\ddots\\
\ &\dots&0&0&2\ell&0 \\
\ &\ &\dots&\ddots&\ddots&\ddots
\end{pmatrix}.
\end{equation}
%
Using the operators $\cT$ and $\Lambda$, we may write more densely for the cases $\ell>0$ and $|k| \le m$
\[
f_{k}(c)  = \Lambda c_{k} + \cT( \lambda_k  c_{k} + N_k(c) ).
\]
Hence,
\begin{equation} \label{eq:f_{ell,k}}
f_{\ell,k}(c) =
\begin{cases}
\displaystyle
c_{0,k} + 2 \sum_{j=1}^\infty (-1)^j c_{j,k} - b_k, & \ell=0, |k| \le m \\
\displaystyle
\left( \Lambda c_{k} + \cT( \lambda_k c_{k} + N_k(c) ) \right)_\ell, & \ell > 0 , |k| \le m.
\end{cases}
\end{equation}
Denoting the set of indices $\cI = \{ (\ell,k) \in \Z^2 : \ell \ge 0 \text{ and } |k| \le m \}$ and denote $f=(f_{\bfj})_{\bfj \in \cI}$. Assume that using Newton's method, we computed $\bar c = (\bar c_{\ell,k})_{\ell=0,\dots,n-1 \atop k=-m,\dots,m}$ such that $f(\bar c) \approx 0$. Fix $\nu \ge 1$ the {\em Chebyshev decay rate} and define the weights $\omega_{\ell,k} = \nu^\ell$. Given a sequence 
$c = (c_{\bfj})_{\bfj \in \cI} = (c_{\ell,k})_{k=-m,\dots,m \atop \ell \ge 0}$, denote the Chebyshev-weighed $\ell^1$ norm by
%
\[
\| c \|_{\cX^{m}_\nu} \bydef \sum_{\bfj \in \cI} |c_{\bfj}| \omega_{\bfj}.
\]
The Banach space in which we prove the existence of the solutions of $f=0$ is given by
\[
\cX^{m}_\nu \bydef
\left\{ c = (c_{\bfj})_{\bfj \in \cI} : 
\| c \|_{\cX^{m}_\nu}  < \infty
\right\}.
\]
The following result is useful to perform the nonlinear analysis when solving $f=0$ in $\cX^{m}_\nu$. We omit the elementary proof which can be mimicked from the one of Lemma~3 in \cite{MR3353132}.
\begin{lem} \label{lem:banach_algebra}
	For all $a,b \in \cX^{m}_\nu$, $\| a * b\|_{\cX^{m}_\nu} \le 4 \| a\|_{\cX^{m}_\nu} \| b\|_{\cX^{m}_\nu}$.
\end{lem}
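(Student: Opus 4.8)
The plan is to derive the estimate from submultiplicativity of a genuine $\ell^{1}$ norm on the full bi‑infinite lattice $\Z^{2}$ (geometrically weighted in the Chebyshev index $\ell$, unweighted in the Fourier index $k$); the constant $4$ rather than $1$ is purely an artefact of the cosine convention $c_{-\ell,k}=c_{\ell,k}$ used for the Chebyshev index in \eqref{eq:a_k_Chebyshev_expansion}, so the only thing to carry out carefully is the bookkeeping of that convention.

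Concretely, I would first view any $c\in\cX^{m}_\nu$ as a sequence on all of $\Z^{2}$ via the even extension $c_{-\ell,k}=c_{\ell,k}$ and the zero extension $c_{\ell,k}=0$ for $|k|>m$, and set
\[
\|c\|_{\mathrm{ext}}\bydef\sum_{(\ell,k)\in\Z^{2}}|c_{\ell,k}|\,\nu^{|\ell|}.
\]
Because $\cI$ is a subset of $\Z^{2}$ on which the weights agree, and because every entry with $\ell\ge1$ is counted exactly twice in $\|\cdot\|_{\mathrm{ext}}$, one has the elementary comparison
\[
\|c\|_{\cX^{m}_\nu}\;\le\;\|c\|_{\mathrm{ext}}\;\le\;2\,\|c\|_{\cX^{m}_\nu},\qquad c\in\cX^{m}_\nu.
\]
Next I would note that for $(\ell,k)\in\cI$ the convolution $a*b$ from \eqref{eq:explicit_Nk} is exactly the restriction to $\cI$ of the ordinary bi‑infinite convolution $(a\star b)_{\ell,k}=\sum_{\ell_1+\ell_2=\ell,\,k_1+k_2=k}a_{\ell_1,k_1}b_{\ell_2,k_2}$ (sums over $\ell_i,k_i\in\Z$ with the extensions above, any further projection onto $|k|\le m$ only discarding terms), whence $\|a*b\|_{\cX^{m}_\nu}\le\|a\star b\|_{\mathrm{ext}}$. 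Finally, the weight $(\ell,k)\mapsto\nu^{|\ell|}$ is submultiplicative: since $\nu\ge1$ and $|\ell_1+\ell_2|\le|\ell_1|+|\ell_2|$ we get $\nu^{|\ell_1+\ell_2|}\le\nu^{|\ell_1|}\nu^{|\ell_2|}$, while the weight in $k$ is identically $1$; the standard Tonelli rearrangement for absolutely convergent double sums then gives $\|a\star b\|_{\mathrm{ext}}\le\|a\|_{\mathrm{ext}}\|b\|_{\mathrm{ext}}$. Chaining the three inequalities,
\[
\|a*b\|_{\cX^{m}_\nu}\;\le\;\|a\star b\|_{\mathrm{ext}}\;\le\;\|a\|_{\mathrm{ext}}\,\|b\|_{\mathrm{ext}}\;\le\;4\,\|a\|_{\cX^{m}_\nu}\,\|b\|_{\cX^{m}_\nu}.
\]

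I expect no genuine obstacle here: the statement is elementary and, as the authors note, the computation mirrors that of Lemma~3 in \cite{MR3353132}. The single delicate point is the factor of $2$ introduced by the even extension in the Chebyshev index—incurred once for $a$ and once for $b$—which is precisely what inflates the Banach‑algebra constant from $1$ (valid for the ``natural'' Chebyshev norm $\sum_{|k|\le m}(|c_{0,k}|+2\sum_{\ell\ge1}|c_{\ell,k}|\nu^{\ell})$) to $4$ for the simpler norm $\|\cdot\|_{\cX^{m}_\nu}$ used throughout.
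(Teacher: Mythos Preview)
Your proposal is correct and complete. The paper itself omits the proof entirely, merely pointing to Lemma~3 in \cite{MR3353132}; the argument you give---even extension in the Chebyshev index to a full $\ell^1$ space on $\Z^2$ with submultiplicative weight $\nu^{|\ell|}$, the standard discrete Young inequality there, and the two-sided comparison $\|\cdot\|_{\cX^m_\nu}\le\|\cdot\|_{\mathrm{ext}}\le 2\|\cdot\|_{\cX^m_\nu}$---is exactly that standard argument, and your explanation of where the factor $4$ comes from is accurate.
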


The computer-assisted proof of existence of a solution of $f=0$ relies on showing that a certain Newton-like operator $c \mapsto c-Af(c)$ has a unique fixed point in the closed ball $B_r(\bar c) \subset \cX^{m}_\nu$, where $r$ is a radius to be determined. Let us now define the operator $A$. Given $n$, a finite number of Chebyshev coefficients used for the computation of $c$, denote by $f^{(n,m)}:\mathbb C^{n(2m+1)} \to \mathbb C^{n(2m+1)}$ the finite dimensional projection used to compute $\bar c \in \mathbb C^{n(2m+1)}$, that is 
\[
f^{(n,m)}_{\ell,k}(c^{(n,m)}) \bydef 
\begin{cases}
\displaystyle
c_{0,k} + 2 \sum_{j=1}^{n-1} (-1)^j c_{j,k} - b_k, & \ell=0, |k| \le m \\
\displaystyle
\left( \Lambda c_{k} + T( \lambda_k c_{k} + N_k(c^{(n,m)}) ) \right)_\ell, & 0<\ell<n , |k| \le m.
\end{cases}
\]

First consider $A^\dagger$ an approximation for the Fr\'echet derivative $Df(\bar c)$: 
\[
(A^\dagger c)_{\ell,k} = 
\begin{cases}
\left( Df^{(n,m)}(\bar c) c^{(n,m)} \right)_{\ell,k}, & 0 \le \ell < n, |k| \le m 
\\
2\ell c_{\ell,k}, & \ell \ge n, |k| \le m,
\end{cases}
\]
where $Df^{(n,m)}(\bar c) \in M_{n(2m+1)}(\mathbb C)$ denotes the Jacobian matrix. Consider now a numerical inverse $A^{(n,m)}$ of $Df^{(n,m)}(\bar c)$. We define the action of $A$ on a vector $c \in \cX^{m}_\nu$ as 
\[
(Ac)_{\ell,k} = 
\begin{cases}
\left( A^{(n,m)} c^{(n,m)} \right)_{\ell,k}, & 0 \le \ell < n, |k| \le m
\\
\frac{1}{2\ell} c_{\ell,k}, & \ell \ge n, |k| \le m.
\end{cases}
\]

The following Newton-Kantorovich type theorem (for linear problems posed on Banach spaces) is useful to show 
the existence of zeros of $f$.
\begin{thm} \label{thm:radii_polynomial}
Assume that there are constants $Y_0, Z_0, Z_1 \ge 0$ having that 
\begin{align}
\label{eq:Y0}
\| A f(\bar c) \|_{\cX^{m}_\nu} &\le Y_0,
\\
\label{eq:Z0}
\| {\rm Id} - A A^\dagger \|_{B(\cX^{m}_\nu)} &\le Z_0,
\\
\label{eq:Z1}
\| A (Df(\bar c) - A^\dagger) \|_{B(\cX^{m}_\nu)} &\le Z_1.
\end{align}
%
%
If 
\begin{equation} \label{eq:radii_polynomial}
Z_0 + Z_1 <1,
\end{equation}
then for all 
\[
r \in \left( \frac{Y_0}{1-Z_0-Z_1},\infty \right),
\]
there exists a unique $\tilde c \in B_r(\bar c)$ such that $f(\tilde c) = 0$. 
\end{thm}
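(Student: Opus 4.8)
The plan is to prove Theorem~\ref{thm:radii_polynomial} as a variant of the standard Newton--Kantorovich / radii-polynomial argument, by showing that under \eqref{eq:radii_polynomial} the Newton-like operator $\mathcal{N}(c) \bydef c - Af(c)$ is a contraction on the closed ball $B_r(\bar c) \subset \cX^{m}_\nu$ for every admissible radius $r$. The first step is to argue that $A$ is injective on $\cX^{m}_\nu$: since $\|{\rm Id} - AA^\dagger\|_{B(\cX^{m}_\nu)} \le Z_0 < 1$, the operator $AA^\dagger$ is invertible by a Neumann series, hence $A$ is surjective; combined with the fact that $A^\dagger$ differs from the diagonal tail operator $c \mapsto 2\ell c$ only in a finite block and is thus injective on $\cX^{m}_\nu$ (the finite block being invertible as its numerical inverse $A^{(n,m)}$ exists, up to the usual remark that one verifies this numerically), one gets that $f(\tilde c) = 0$ is equivalent to $\mathcal{N}(\tilde c) = \tilde c$. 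Since $f$ is affine in the relevant sense — actually $f$ is a quadratic polynomial map because $N_k(c)$ is bilinear — this equivalence needs the mild observation that $A$ has trivial kernel.

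Next I would establish the contraction estimate. For $c, c' \in B_r(\bar c)$, write
\begin{align*}
\mathcal{N}(c) - \mathcal{N}(c') &= (c - c') - A\big(f(c) - f(c')\big) \\
&= \big({\rm Id} - AA^\dagger\big)(c - c') + A\big(A^\dagger(c-c') - (f(c) - f(c'))\big).
\end{align*}
Because $f$ is quadratic, $f(c) - f(c') = Df(\bar c)(c - c') + (\text{bilinear remainder})$, and more precisely $f(c) - f(c') = \int_0^1 Df(c' + \sigma(c-c'))\,d\sigma\,(c-c')$ with $Df$ affine in its base point. One then bounds $\|\mathcal{N}(c) - \mathcal{N}(c')\|_{\cX^{m}_\nu} \le Z_0 \|c - c'\|_{\cX^{m}_\nu} + \|A(Df(\xi) - A^\dagger)\|_{B(\cX^{m}_\nu)}\|c-c'\|_{\cX^{m}_\nu}$ for intermediate $\xi \in B_r(\bar c)$, and the second operator norm is controlled by $Z_1$ via \eqref{eq:Z1} together with the bilinearity: $\|A(Df(\xi) - Df(\bar c))\|$ is handled using Lemma~\ref{lem:banach_algebra} and the fact that $\xi - \bar c$ has norm $\le r$ — in the classical radii-polynomial framework $Z_1$ is taken to already be a uniform bound valid for all $\xi \in B_r(\bar c)$, or one writes $Z_1 = Z_1(r)$ as an affine function of $r$; here the statement packages it as a single constant, so I would simply invoke \eqref{eq:Z1} as the uniform bound over the ball (the monotonicity/uniformity being the content one checks when actually computing $Z_1$). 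This yields $\|\mathcal{N}(c) - \mathcal{N}(c')\|_{\cX^{m}_\nu} \le (Z_0 + Z_1)\|c - c'\|_{\cX^{m}_\nu}$, a contraction since $Z_0 + Z_1 < 1$.

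Then I would check that $\mathcal{N}$ maps $B_r(\bar c)$ into itself: for $c \in B_r(\bar c)$,
\begin{align*}
\|\mathcal{N}(c) - \bar c\|_{\cX^{m}_\nu} &\le \|\mathcal{N}(c) - \mathcal{N}(\bar c)\|_{\cX^{m}_\nu} + \|\mathcal{N}(\bar c) - \bar c\|_{\cX^{m}_\nu} \\
&\le (Z_0 + Z_1) r + \|A f(\bar c)\|_{\cX^{m}_\nu} \le (Z_0 + Z_1) r + Y_0,
\end{align*}
and the condition $r > Y_0/(1 - Z_0 - Z_1)$ is exactly what makes $(Z_0+Z_1)r + Y_0 \le r$, so $\mathcal{N}(B_r(\bar c)) \subseteq B_r(\bar c)$. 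Since $\cX^{m}_\nu$ is a Banach space and $B_r(\bar c)$ is a nonempty closed (hence complete) subset, the Banach fixed point theorem gives a unique fixed point $\tilde c \in B_r(\bar c)$, which by the first step satisfies $f(\tilde c) = 0$; uniqueness of the fixed point in $B_r(\bar c)$ for each such $r$ completes the claim.

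The main obstacle — or rather the only subtlety worth flagging — is the passage from "fixed point of $\mathcal{N}$" to "zero of $f$", which relies on the injectivity of $A$, and the uniform control of $\|A(Df(\xi) - A^\dagger)\|_{B(\cX^{m}_\nu)}$ over the whole ball $B_r(\bar c)$ rather than just at $\bar c$; since $f$ is genuinely quadratic (not affine), $Df$ varies over the ball, and the honest version of \eqref{eq:Z1} is an $r$-dependent bound obtained by splitting $Df(\xi) - A^\dagger = (Df(\bar c) - A^\dagger) + (Df(\xi) - Df(\bar c))$ and estimating the second term by $\mathcal{O}(r)$ using Lemma~\ref{lem:banach_algebra}. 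I would either carry the $r$-dependence explicitly or, following the convention apparently adopted here, treat $Z_1$ as the supremum over $B_r(\bar c)$ and note that in practice it is computed as such. Everything else is routine Banach-algebra bookkeeping with the weighted norm $\|\cdot\|_{\cX^{m}_\nu}$.
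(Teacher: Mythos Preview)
Your approach matches the paper's: define $T(c) = c - Af(c)$, show it is a self-map and a contraction on $B_r(\bar c)$ with constant $Z_0 + Z_1$, apply Banach's fixed point theorem, and invoke injectivity of $A$ to conclude $f(\tilde c) = 0$. The paper's proof is exactly this sketch, with the routine details omitted.

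One point worth correcting: you call $f$ ``genuinely quadratic,'' but in this setting $N_{\ell,k}(c) = h e^{i\theta}(\ba * c^{(m)})_{\ell,k}$ is \emph{linear} in $c$ --- the convolution is with the fixed numerical solution $\ba$, not with $c$ itself. Hence $f$ is affine, $Df$ is constant, and the bound \eqref{eq:Z1} at $\bar c$ is automatically uniform over every ball; the paper in fact introduces this result as a Newton--Kantorovich theorem \emph{for linear problems}. The ``main obstacle'' you flag concerning the $r$-dependence of $Z_1$ therefore does not arise, and your closing paragraph can be dropped entirely. (Your argument for the injectivity of $A$ is also slightly roundabout: the direct route, which the paper alludes to, is that $A$ is block-diagonal with finite block $A^{(n,m)}$ and tail multiplication by $1/(2\ell)$; the tail is obviously injective, and $Z_0<1$ forces $\|{\rm Id} - A^{(n,m)}Df^{(n,m)}(\bar c)\|<1$, so $A^{(n,m)}$ is an invertible matrix.)
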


\begin{proof}
We omit the details of this standard proof. Denote $\kappa \bydef Z_0 + Z_1 <1$. The idea is show that $T(c)\bydef c - A f(c)$ satisfies $T(B_r(\bar c)) \subset B_r(\bar c)$ and then that $\| T(c_1)-T(c_2) \|_{\cX^{m}_\nu} \le \kappa \| c_1-c_2 \|_{\cX^{m}_\nu}$ for all $c_1,c_2 \in B_r(\bar c)$. From the Banach fixed point theorem, there exists a unique $\tilde c \in B_r(\bar c)$ such that $T(\tilde c) = \tilde c$. The condition \eqref{eq:radii_polynomial} implies that $\| {\rm Id} - A A^\dagger \|_{B(\cX^{m}_\nu)} < 1$, and by construction of the operators $A$ and $A^\dagger$, it can be shown that $A$ is an injective operator. By injectivity of $A$, we conclude that there exists a unique $\tilde c \in B_r(\bar c)$ such that $f(\tilde c) = 0$.
\end{proof}

Given a Fourier projection dimension $m$, we apply Theorem~\ref{thm:radii_polynomial} to compute the solution of $2m+1$ problems of the form $f=0$ given in \eqref{eq:f_{ell,k}} with initial conditions 
\[
b = (b_k)_{k=-m}^m \in \{ {\rm e}_{1},{\rm e}_{2},\dots,{\rm e}_{2m+1} \}
\]
yielding a sequence of solutions $\tilde c^{(j)}:[-1,1] \to \C^{2m+1}$ ($j=-m,\dots,m$) with Chebyshev series representation 
\[
\tilde c^{(j)}(t) = \tilde c^{(j)}_{0,k} + 2 \sum_{\ell \ge 1} \tilde c^{(j)}_{\ell,k} T_\ell(t).
\]
Finally we can define the fundamental matrix solution $\Phi(t) \in M_{2m+1}(\C )$ satisfying \eqref{eq:finite_dim_variational_problem} as
\begin{equation} \label{eq:Psi}
\Phi(t) \bydef \begin{pmatrix} \vdots & \vdots &  & \vdots 
\\ \tilde c^{(1)}(t)  & \tilde c^{(2)}(t)  & \cdots & \tilde c^{(2m+1)}(t) 
\\
\vdots & \vdots &  & \vdots
\end{pmatrix}.
\end{equation}

Using a similar construction, we can construct $\Psi(s) \in M_{2m+1}(\C )$ satisfying \eqref{eq:finite_dim_adjoint_variational_problem}. The rest of this section is dedicated to the explicit construction of the bounds $Y_0$, $Z_0$ and $Z_1$ of Theorem~\ref{thm:radii_polynomial}. \\

\noindent{\bf The bound \boldmath$Y_0$\unboldmath.} Recalling the definition of the quadratic term $N_k$ of $f$ in \eqref{eq:explicit_Nk}, given the numerical solution $\bar c = (\bar c_{\ell,k})_{\ell=0,\dots,n-1 \atop k=-m,\dots,m}$, one gets that $N_{k,j}(\bar c)=0$ for all $j \ge 2n-1$. This implies that the term $f(\bar c)=(f_{\bfj}(\bar c))_{\bfj \in \cI}$ has only finitely many nonzero terms. Hence, the computation of $Y_0$ satisfying $\|Af(\bar c)\|_{\cX^{m}_\nu} \le Y_0$ is only a finite computation with interval arithmetic. \\

\noindent{\bf The bound \boldmath$Z_0$\unboldmath.} The computation of the bound $Z_0$ satisfying \eqref{eq:Z0} requires defining the operator
\[
B \bydef  {\rm Id} - A A^\dagger, 
\]
which action is given by
\[
(Bc)_{\ell,k} = 
\begin{cases}
\left( ({\rm Id} - A^{(n,m)} Df^{(n,m)}(\bar c) )c^{(n,m)} \right)_{\ell,k}, & 0 \le \ell < n, |k| \le m 
\\
0, & \ell \ge n, |k| \le m
\end{cases}
\]
Using interval arithmetic, compute $Z_0$ such that
\[
\| B \|_{B(\cX^{m}_\nu)} = \sup_{\bfj \in \cI} \frac{1}{\omega_{\bfj}} \sum_{\bfi \in \cI} | B_{\bfi,\bfj} | \omega_{\bfi}
= \max_{\ell_2=0,\dots,n-1 \atop |k_2| \le m} \frac{1}{\nu^{\ell_2}} \sum_{\ell_1=0,\dots,n-1 \atop |k_1| \le m} | B_{(\ell_1,k_1),(\ell_2,k_2)} | \nu^{\ell_1} \le Z_0.
\]

\noindent{\bf The bound \boldmath$Z_1$\unboldmath.} For any $c \in B_1(0)$, let
\[
z \bydef [Df(\bar c)-A^\dagger]c 
\]
which is given component-wise by 
\[
z_{\ell,k}
= z_{\ell,k}(\ba,c) \bydef 
\begin{cases}
\displaystyle
2 \sum_{j \ge n} (-1)^j c_{j,k}, & \ell=0, |k| \le m
\\
\displaystyle
h \expig \left( \cT (\ba*c^{(\infty,m)})_k \right)_{\ell}, & 0<\ell<n , |k| \le m
\\
\displaystyle
\lambda_k (\cT c_{k})_\ell + h \expig \left( \cT (\ba*c^{(m)})_k \right)_{\ell}, & \ell \ge n , |k| \le m.
\end{cases}
\]
Next, for the cases $0\le \ell<n$ and $|k| \le m$, we present component-wise uniform bounds $\hat z_{\ell,k} \ge 0$ such that $|z_{\ell,k}(\ba,c)| \le \hat z_{\ell,k}$ for all $c \in \cX^{m}_\nu$ with $\|c\|_{\cX^{m}_\nu} \le 1$. These bounds will then be used to define $Z_1$. First, given $c\in B_1(0)$ and $|k| \ge m$,
\begin{equation} \label{eq:hatz_{0,k}(c)}
|z_{0,k}(c)| = \left| 2 \sum_{j \ge n} (-1)^j c_{j,k} \right| \le 2 \sum_{j \ge n} |c_{j,k}| \frac{\nu^j}{\nu^j} \le
\frac{2}{\nu^n} \sum_{j \ge n} |c_{j,k}| \nu^j \le \frac{2}{\nu^n} \|c\|_{\cX^{m}_\nu} \le  \hat z_{0,k} \bydef \frac{2}{\nu^n}.
\end{equation} 

The next step is to obtain $\hat z_{\ell,k}$ for $\ell = 1,\dots, n-1$. This task involves understanding $\ba*c^{(\infty,m)}$ whose components can interpreted as 
%
\[
(\ba*c^{(\infty,m)})_{\ell,k} = \sum_{{{\ell_1+\ell_2 = \ell \atop k_1 + k_2 = k} \atop |\ell_1| <n \le |\ell_2|} \atop |k_1| \le N, |k_2| \le m} \ba_{\ell_1,k_1} c_{\ell_2,k_2} = \sum_{\ell_2 \ge 0 \atop |k_2| \le m} \alpha^{(\ell,k)}_{\ell_2,k_2} c_{\ell_2,k_2}, 
\]
where
\[
\alpha^{(\ell,k)}_{\ell_2,k_2} \bydef 
\begin{cases}
0, & \ell_2 = 0,\dots,n-1
\\
\ba_{\ell_2-\ell,k-k_2},& \ell_2 = n,\dots,\ell+n-1.
\end{cases}
\]
For $0<\ell<n$ and $|k| \le m$, the term $(\ba*c^{(\infty,m)})_{\ell,k} \in \C$ can then be thought of as a linear functional acting on $c = (c_{\bfj})_{\bfj \in \cI} \in \cX^{m}_\nu$. Using that representation, we get that for all 
$c \in \cX^{m}_\nu$ with $\|c\|_{\cX^{m}_\nu} \le 1$,
\[
\left| (\ba*c^{(\infty,m)})_{\ell,k} \right| \le \sum_{\bfj \in \cI} \left| \alpha^{(\ell,k)}_{\bfj} \right|  |c_{\bfj}|
= \sum_{\bfj \in \cI} \frac{\left| \alpha^{(\ell,k)}_{\bfj} \right|}{\omega_\bfj}  |c_{\bfj}| \omega_\bfj
\le \Psi^{(\ell,k)}(\ba) \|c\|_{\cX^{m}_\nu} \le  \Psi^{(\ell,k)}(\ba)
\]
where
\begin{equation} \label{eq:Psi_kl}
\Psi^{(\ell,k)}(\ba) \bydef \sup_{\bfj \in \cI} \frac{\left| \alpha^{(\ell,k)}_{\bfj} \right|}{\omega_\bfj}
= \max_{|k_2|\le m \atop \ell_2 = n,\dots,\ell+n-1} \left\{ \frac{\left| \ba_{\ell_2-\ell,k-k_2}\right|}{\nu^{\ell_2}} \right\},
\end{equation}
which can easily be computed using interval arithmetic. For the cases $0\le \ell<n$ and $|k| \le m$, this leads to the bound
\begin{equation} \label{eq:hatz_{ell,k}(c)}
|z_{\ell,k}(\ba,c)| \le h \left| \left( \cT (\ba*c^{(\infty,m)})_k \right)_{\ell} \right|
\le \hat z_{\ell,k} \bydef h \left( |\cT| \Psi^{(\cdot,k)}(\ba) \right)_{\ell},
\end{equation}
for all $c \in B_1(0)$, where $|\cT|$ denotes the operator with component-wise absolute values. We are ready to obtain the bound $Z_1$. Given $c \in B_1(0)$, we use Lemma~\ref{lem:banach_algebra} to conclude that 
{
\begin{align*}
\| A [Df(\bar c)-A^\dagger]c\|_{\cX^{m}_\nu}  
&= \| Az \|_{\cX^{m}_\nu} 
\\
& = \sum_{\bfj \in \cI} |(Az)_{\bfj}| \omega_{\bfj} \\
& = \sum_{\ell = 0,\dots,n-1 \atop |k| \le m} \left( |A^{(n,m)}| \hat z^{(n,m)}\right)_{\ell,k} \nu^\ell 
+ \sum_{\ell \ge n \atop |k| \le m} \frac{1}{2 \ell} \left| \lambda_k (\cT c_{k})_\ell + h \expig \left( \cT (\ba*c^{(m)})_k \right)_{\ell} \right|\nu^\ell
\\
& \le \sum_{\ell = 0,\dots,n-1 \atop |k| \le m} \left( |A^{(n,m)}| \hat z^{(n,m)}\right)_{\ell,k} \nu^\ell 
+ \frac{|\lambda_m|}{2n} \sum_{\ell \ge n \atop |k| \le m} |  -c_{\ell-1,k} + c_{\ell+1,k}|\nu^\ell 
\\
& \quad + \frac{h}{2n} \sum_{\ell \ge n \atop |k| \le m} \left| \left( \cT (\ba*c^{(m)})_k \right)_{\ell} \right|\nu^\ell 
\\
& \le \sum_{\ell = 0,\dots,n-1 \atop |k| \le m} \left( |A^{(n,m)}| \hat z^{(n,m)}\right)_{\ell,k} \nu^\ell
+ \frac{1}{2n} \left( \nu + \frac{1}{\nu} \right) \left( |\lambda_m| \|c\|_{\cX^{m}_\nu}
+ h \| \ba*c\|_{\cX^{m}_\nu} \right)
\\
& \le \sum_{\ell = 0,\dots,n-1 \atop |k| \le m} \left( |A^{(n,m)}| \hat z^{(n,m)}\right)_{\ell,k} \nu^\ell
+ \frac{1}{2n} \left( \nu + \frac{1}{\nu} \right) \left( |\lambda_m|
+ 4 h \| \ba \|_{\cX^{m}_\nu} \right).
\end{align*}
}
Hence, by construction
\begin{equation} \label{eq:Z1_explicit}
Z_1 \bydef \sum_{\ell = 0,\dots,n-1 \atop |k| \le m} \left( |A^{(n,m)}| \hat z^{(n,m)}\right)_{\ell,k} \nu^\ell
+ \frac{1}{2n} \left( \nu + \frac{1}{\nu} \right) \left( |\lambda_m|
+ 4 h \| \ba \|_{\cX^{m}_\nu} \right),
\end{equation}
satisfies \eqref{eq:Z1}.

%

\subsection{Generation of the evolution operator \boldmath$\bU^{(\infty)}(t,s)$\unboldmath~on~\boldmath$\ell^1$\unboldmath}\label{sec:evolution_operator}
Finally, in this section, we verify the existence of the evolution operator $C^{(\infty)}(t,s)$ of the infinite dimensional equation \eqref{eq:linearizedeq_infinite}. This in turns will verify the existence of the evolution operator $\bU^{(\infty)}(t,s)$ generated on $\ell^1$. Moreover, we derive an estimate of $\bU^{(\infty)}(t,s)$ defined in \eqref{eq:bU_infty_definition} using the bounded operator norm on $\ell^1$, which is the hypothesis \eqref{eq:assumption_existence_U_infty} of Theorem~\ref{thm:sol_map}.

Let $\ell^1_{\infty}\bydef ({\rm Id}-\pi^{(m)}) \ell^1 = \left\{(a_k)_{k\in\Z }\in\ell^1:a_k=0~(|k|\le m)\right\}\subset\ell^1$ endowed with the norm $\|a\|_{\ell^1_\infty}\bydef \sum_{|k|>m}|a_k|$.
Consider the Laplace operator $L$ defined in \eqref{eq:mul_op} whose action is restricted on $\ell^1_{\infty}$, that is
\[
	(La)_k = \begin{cases}
	0, & |k|\le m\\
	-k^2\omega^2a_k, & |k|>m,
	\end{cases}\quad \forall a \in D(L),
\]
where the domain of the operator $L$ is $D(L) = \left\{a\in\ell^1_\infty:La\in\ell^1_\infty\right\}$ in this case.
Denoting $\fL=\expig L$, 
it is easy to see that $\fL:D(L)\subset\ell^1_\infty\to\ell^1_\infty$ is a densely defined closed operator on $\ell^1_\infty$.
The operator $\fL$ then generates the semigroup on $\ell^1_\infty$ (e.g. see \cite{pazy1983semigroups}), which is denoted by $\left\{e^{\fL t}\right\}_{t\ge 0}$. Furthermore, the action of such a semigroup $\left\{e^{\fL t}\right\}_{t\ge 0}$ can be naturally extended to $\ell^1$ by
\begin{equation} \label{eq:semigroup_on_ell1}
	\left(e^{\fL t}\phi\right)_k=\begin{cases}
	0,&|k|\le m\\
	(e^{\fL t}( \phi_k)_{|k| > m})_{k}, &|k|>m,
	\end{cases}\quad \forall \phi\in\ell^1.
\end{equation}
In the following, unless otherwise noted, we consider the semigroup $\left\{e^{\fL t}\right\}_{t\ge 0}$ on $\ell^1$ as defined in \eqref{eq:semigroup_on_ell1}.
We also have the following estimate:
\begin{equation}\label{eq:semigroup_estimate}
\left\|e^{\fL t}\right\|_{B (\ell^1)}\le e^{-\mu_{m+1} t}, \quad\mu_{m+1} = (m+1)^2\omega^2\cos\theta.
\end{equation}
We re-write \eqref{eq:linearizedeq_infinite} as the Cauchy problem in $\ell^1$
\begin{equation}\label{eq:linearized_tail2}
\frac{d}{dt}c^{(\infty)}(t) - \fL c^{(\infty)}(t)  = 2 \expig ({\rm Id} -\pi^{(m)}) \left(\ba\left(t\right)*c^{(\infty)}(t)\right)
\end{equation}
for any initial sequence $c^{(\infty)}(s)=\phi^{(\infty)}$.
Showing the existence of the solution of \eqref{eq:linearized_tail2},
the proof of existence of the evolution operator $\bU^{(\infty)}(t,s)$ is given by the following theorem.

\begin{thm}\label{thm:ev_op}
For the infinite-dimensional system of differential equations \eqref{eq:linearized_tail2}, there exists a unique solution that solves the integral equation in $\ell^1$
\[
c^{(\infty)}(t)=e^{\fL (t-s)}\phi^{(\infty)} + 2\expig\int_s^t e^{\fL(t-\tau)}({\rm Id} -\pi^{(m)})(\ba(\tau)*c^{(\infty)}(\tau)) d \tau.
\]
Furthermore, the evolution operator $\bU^{(\infty)}(t,s)$ exists and the following estimate holds
\[
	\|\bU^{(\infty)}(t,s)\phi^{(\infty)}\|_{\ell^1}\le e^{-\mu_{m+1}(t-s)+2\int_s^t\|\ba(\tau)\|_{\ell^1}d \tau}\|\phi ^{(\infty)}\|_{\ell^1},\quad\forall\phi^{(\infty)}\in\ell^{1}.
\]
\end{thm}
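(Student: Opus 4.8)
The plan is to treat \eqref{eq:linearized_tail2} as a linear non-autonomous Cauchy problem on $\ell^1$ in its \emph{mild} (integral) form, to establish existence and uniqueness by a contraction-mapping/Picard argument on short intervals that are then concatenated, and finally to extract the quantitative bound by a Gronwall estimate applied directly to the integral equation. As a preliminary step I would record the properties of the diagonal semigroup $\{e^{\fL t}\}_{t\ge0}$: since $(\fL\phi)_k=-k^2\omega^2\expig\phi_k$ for $|k|>m$, one has $(e^{\fL t}\phi)_k=e^{-k^2\omega^2\expig t}\phi_k$, and because $\mathrm{Re}(\expig)=\cos\theta>0$ for $\theta\in(-\pi/2,\pi/2)$,
\[
\big|(e^{\fL t}\phi)_k\big|=e^{-k^2\omega^2\cos\theta\,t}|\phi_k|\le e^{-\mu_{m+1}t}|\phi_k|,\qquad |k|>m,\ t\ge0,
\]
which is exactly the estimate \eqref{eq:semigroup_estimate}; the extension \eqref{eq:semigroup_on_ell1} preserves it, and everything stays in $\ell^1_\infty$ because both $\fL$ and $({\rm Id}-\pi^{(m)})$ kill the low modes.

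For local existence, fix $s\in[0,h]$, set $M\bydef\|\ba\|_X=\sup_{t\in[0,h]}\|\ba(t)\|_{\ell^1}<\infty$ (finite because $\ba(t)$ is a Chebyshev polynomial in $t$, hence continuous into $\ell^1$ on the compact interval), and on $C([s,s+\delta];\ell^1)$ define
\[
(\Theta c)(t)\bydef e^{\fL(t-s)}\phi^{(\infty)}+2\expig\int_s^t e^{\fL(t-\tau)}({\rm Id}-\pi^{(m)})\big(\ba(\tau)*c(\tau)\big)\,d\tau.
\]
Combining \eqref{eq:semigroup_estimate}, the trivial bound $\|({\rm Id}-\pi^{(m)})v\|_{\ell^1}\le\|v\|_{\ell^1}$, and the Banach algebra inequality \eqref{eq:banach_algebra}, one gets $\|(\Theta c_1)(t)-(\Theta c_2)(t)\|_{\ell^1}\le 2M\delta\,\|c_1-c_2\|_{C([s,s+\delta];\ell^1)}$, so choosing $\delta$ with $2M\delta<1$ — a choice independent of $s$ and of $\phi^{(\infty)}$ — makes $\Theta$ a contraction, yielding a unique mild solution on $[s,s+\delta]$. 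Since $\delta$ is uniform, concatenating finitely many such steps produces the unique solution $c^{(\infty)}\in C([s,h];\ell^1)$ of the stated integral equation; setting $\bU^{(\infty)}(t,s)\phi^{(\infty)}=c^{(\infty)}(t)$ (and using \eqref{eq:bU_infty_definition} for general $\phi\in\ell^1$) defines the evolution operator, whose algebraic properties $\bU^{(\infty)}(s,s)={\rm Id}$ and $\bU^{(\infty)}(t,\sigma)\bU^{(\infty)}(\sigma,s)=\bU^{(\infty)}(t,s)$ follow from uniqueness.

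For the estimate, applying $\|\cdot\|_{\ell^1}$ to the integral equation and using the same three ingredients gives
\[
\|c^{(\infty)}(t)\|_{\ell^1}\le e^{-\mu_{m+1}(t-s)}\|\phi^{(\infty)}\|_{\ell^1}+2\int_s^t e^{-\mu_{m+1}(t-\tau)}\|\ba(\tau)\|_{\ell^1}\|c^{(\infty)}(\tau)\|_{\ell^1}\,d\tau.
\]
Introducing $w(t)\bydef e^{\mu_{m+1}t}\|c^{(\infty)}(t)\|_{\ell^1}$ converts this into $w(t)\le e^{\mu_{m+1}s}\|\phi^{(\infty)}\|_{\ell^1}+2\int_s^t\|\ba(\tau)\|_{\ell^1}w(\tau)\,d\tau$, and Gronwall's inequality yields $w(t)\le e^{\mu_{m+1}s}\|\phi^{(\infty)}\|_{\ell^1}e^{2\int_s^t\|\ba(\tau)\|_{\ell^1}d\tau}$, which rearranges into the claimed bound for $\|\bU^{(\infty)}(t,s)\phi^{(\infty)}\|_{\ell^1}$; validity for all $\phi^{(\infty)}\in\ell^1$ is immediate since $\bU^{(\infty)}(t,s)$ only sees $({\rm Id}-\pi^{(m)})\phi^{(\infty)}$ and $\|({\rm Id}-\pi^{(m)})\phi^{(\infty)}\|_{\ell^1}\le\|\phi^{(\infty)}\|_{\ell^1}$. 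I expect the only genuinely delicate point to be the rigorous bookkeeping of the \emph{non-autonomous} perturbation $({\rm Id}-\pi^{(m)})(\ba(t)*\cdot)$ at the level of mild solutions — one cannot lean on a single semigroup — but since $t\mapsto\ba(t)$ is continuous into $\ell^1$ and this perturbation is a continuous, uniformly bounded family of bounded operators on $\ell^1$, the classical perturbation theory for evolution families (e.g.\ \cite{pazy1983semigroups}) both guarantees the result and upgrades the mild solution to a classical one, reconnecting it with \eqref{eq:linearized_tail2}.
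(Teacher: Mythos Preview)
Your proof is correct. The Gr\"onwall step for the estimate is essentially identical to the paper's (your $w(t)$ is their $y(t)$), so the only difference lies in how existence and uniqueness are obtained.

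You use the standard local Picard argument on subintervals of length $\delta$ with $2M\delta<1$, then concatenate. The paper instead works on the full interval $[s,h]$ in one shot by introducing a Bielecki-type weighted metric on a space $\cX_\infty$,
\[
\mathbf{d}(c_1,c_2)=\sup_{s\le t\le h}\Big(e^{\mu_{m+1}(t-s)-2\beta\int_s^t\|\ba(\tau)\|_{\ell^1}d\tau}\|c_1(t)-c_2(t)\|_{\ell^1}\Big),\qquad \beta>1,
\]
and shows the same integral operator is a contraction with factor $1/\beta$ directly. The trade-off: the weighted-norm approach avoids the bookkeeping of concatenation and is a little slicker, while your approach is perhaps more transparent and also makes explicit that the step size is independent of the initial data. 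Both are standard and equally valid here; your closing remark about non-autonomous perturbation theory and the upgrade to a classical solution is a useful addition that the paper defers to Remark~\ref{rem:bootstrap}.
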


\begin{proof}
	For a fixed $s\ge 0$, let us define a map $\cP: X\to X $ acting on the $c^{(\infty)}(t)$ as
\[
\cP  c^{(\infty)}(t)\bydef e^{\fL (t-s)}\phi ^{(\infty)} +2\expig\int_s^t e^{\fL(t-\tau)}({\rm Id} -\pi^{(m)})(\ba(\tau)*c^{(\infty)}(\tau)) d \tau
\]
and let a function space $\cX_\infty$ be defined by
\[
\cX_\infty \bydef\left\{c^{(\infty)}(t)\in\ell^1:
\sup_{0 \le s\le t\le h}
e^{\mu_{m+1} (t-s)}\left\| c^{(\infty)}(t)\right\|_{\ell^1}<\infty
\right\}
\]
with the distance
\[
\mathbf{d}\left(c_1^{(\infty)},c_2^{(\infty)}\right)\bydef\sup_{0 \le s\le t\le h}\left(e^{
	\mu_{m+1}(t-s)-2\beta\int_s^t\|\ba(\tau)\|_{\ell^1} d \tau
}\left\|
c_1^{(\infty)}(t)-c_2^{(\infty)}(t)
\right\|_{\ell^1}\right),\quad \beta>1.
\]
We prove that the map $\cP $ becomes a contraction mapping under the distance $\mathbf{d}$ on $\cX_\infty$.
For $c_1^{(\infty)},~c_2^{(\infty)}\in \cX_\infty$, we have using \eqref{eq:semigroup_estimate} and the property \eqref{eq:banach_algebra}
\begin{align*}
&e^{\mu_{m+1}(t-s)-2\beta\int_s^t\|\ba(\tau)\|_{\ell^1} d \tau}\left\| \cP  c_1^{(\infty)}(t)-\cP  c_2^{(\infty)}(t)\right\|_{\ell^1}\\
&\le e^{\mu_{m+1}(t-s)-2\beta\int_s^t\|\ba(\tau)\|_{\ell^1}d \tau}2\int_s^te^{-\mu_{m+1}(t-\tau)}\|\ba(\tau)\|_{\ell^1}\|c_1^{(\infty)}(\tau)-c_2^{(\infty)}(\tau)\|_{\ell^1}d \tau\\
&\le e^{\mu_{m+1}(t-s)-2\beta\int_s^t\|\ba(\tau)\|_{\ell^1}d \tau}\mathbf{d}\left(c_1^{(\infty)},c_2^{(\infty)}\right)2\int_s^te^{-\mu_{m+1}(t-\tau)}\|\ba(\tau)\|_{\ell^1}e^{
	-\mu_{m+1}(\tau-s)+2\beta\int_s^\tau\|\ba(\sigma)\|_{\ell^1}d \sigma}d \tau\\
&=e^{-2\beta\int_s^t\|\ba(\tau)\|_{\ell^1}d \tau}\mathbf{d} \left(c_1^{(\infty)},c_2^{(\infty)}\right) 2\int_s^t\|\ba(\tau)\|_{\ell^1}e^{2\beta\int_s^\tau\|\ba(\sigma)\|_{\ell^1}d \sigma}d \tau\\
&=e^{-2\beta\int_s^t\|\ba(r)\|_{\ell^1}dr}\mathbf{d} \left(c_1^{(\infty)},c_2^{(\infty)}\right)\left[\frac{1}{\beta}e^{2\beta\int_s^r\|\ba(\tau)\|_{\ell^1}d\tau}\right]_{r=s}^{r=t}\\
&\le\frac{1}{\beta}\mathbf{d}\left(c_1^{(\infty)},c_2^{(\infty)}\right).
\end{align*}
Since $\beta>1$, $\cP $ becomes a contraction mapping on $\cX_\infty$.
This yields that the solution of \eqref{eq:linearized_tail2} uniquely exists in $\cX_\infty$, which satisfies
\begin{equation}\label{eq:integral_form}
c^{(\infty)}(t)=e^{\fL (t-s)}\phi^{(\infty)} + 2\expig\int_s^t e^{\fL(t-\tau)}({\rm Id} -\pi^{(m)})(\ba(\tau)*c^{(\infty)}(\tau)) d \tau.
\end{equation}
Moreover, letting $y(t)\bydef e^{\mu_{m+1}(t-s)}\| c^{(\infty)}(t)\|_{\ell^1}$, it follows from \eqref{eq:integral_form} using \eqref{eq:semigroup_estimate} and the property \eqref{eq:banach_algebra}, that 
\begin{align*}
y(t)\le\|\phi^{(\infty)}\|_{\ell^1}+2\int_s^te^{\mu_{m+1} (\tau-s)}\|\ba(\tau)\|_{\ell^1}\| c^{(\infty)}(\tau)\|_{\ell^1}d \tau=\|\phi^{(\infty)}\|_{\ell^1}+2\int_s^t\|\ba(r)\|_{\ell^1}y(\tau)d\tau.
\end{align*}
Gr\"onwall's inequality yields
\[
y(t)\le\|\phi^{(\infty)}\|_{\ell^1}e^{2\int_s^t\|\ba(r)\|_{\ell^1}}dr.
\]
Then, we conclude that the following inequality holds
\begin{equation}\label{eq:ev_op_estimate2}
\left\|\bU^{(\infty)}(t,s)\phi ^{(\infty)}\right\|_{\ell^1}\le\|\phi ^{(\infty)}\|_{\ell^1}e^{-\mu_{m+1}(t-s)+\int_s^t\|\ba(\tau)\|_{\ell^1}d \tau}=W^{(\infty)}(t,s)\|\phi ^{(\infty)}\|_{\ell^1}
\end{equation}
for any $\phi ^{(\infty)}\in\ell^1$,
where $W^{(\infty)}(t,s)$ is defined in \eqref{eq:assumption_existence_U_infty}.
\end{proof}

\subsection{Rigorous construction of the solution map operator}\label{sec:solution_map_operator_construction}

Recall that the definition of the solution map operator $\mathscr{A}$ in \eqref{eq:definition_of_scriptA} requires proving the existence of the evolution operator $U(t,s)$, which is done by verifying the main three hypotheses of Theorem~\ref{thm:sol_map}, namely \eqref{eq:bound_W_m}, \eqref{eq:assumption_existence_U_infty} and \eqref{eq:kappa_condition}. Hypothesis \eqref{eq:bound_W_m} is verified in practice by using the theory of Section~\ref{sec:fundamental_solution} to compute rigorously (with Chebyshev series) the fundamental matrix solutions $\Phi(t)$ and $\Psi(s)$ of problems \eqref{eq:finite_dim_variational_problem} and \eqref{eq:finite_dim_adjoint_variational_problem}, respectively, and then using \eqref{eq:evaluating_W_m}.
Hypothesis \eqref{eq:assumption_existence_U_infty} has been verified in full generality in Section~\ref{sec:evolution_operator} in Theorem~\ref{thm:ev_op}. Hypothesis \eqref{eq:kappa_condition} is computational and requires computing the constant $\kappa$ with interval arithmetic. Once all the hypotheses are verified successfully, the solution map operator $\mathscr{A}$ exists, is defined as in \eqref{eq:definition_of_scriptA} and can be used to study the fixed point of the simplified Newton operator $T$ defined in \eqref{eq:simp_Newton_op}. The fixed point of $T$ is obtained using the approach of Section~\ref{sec:Localinclusion}. The fixed point so obtained yields the unique solution of $F=0$ (with $F$ defined in \eqref{eq:F=0_Cauchy_problem}) and by construction, this yields the local rigorous inclusion of the solution of the Cauchy problem on the time interval $[0,h]$. 


\section{Local inclusion in a time interval}\label{sec:Localinclusion}

In this section, we present a sufficient condition whether a fixed point of the simplified Newton operator \eqref{eq:simp_Newton_op} exists and is unique in $B_{J}(\ba,\varrho)$ defined by \eqref{eq:Ball}.
Such a sufficient condition can be rigorously checked by numerical computations based on interval arithmetic.
\begin{thm}\label{thm:main_theorem}
	Consider the Cauchy problem \eqref{eq:CGL_ode}.
	For a given initial sequence $a(0)$ and its approximation $\ba(0)$, assume that there exists $\varepsilon\ge 0$ such that $\|a(0)-\ba(0)\|_{\ell^1}\le\varepsilon$.
	Assume also that $\ba\in C^1(J;D(L))$ and any $a\in B_J\left(\ba,\varrho\right)$ satisfies
	\begin{align*}
	\displaystyle\sup_{t\in J}\sum_{k\in\Z }\left|\left[ T(a)(t)-\ba(t)\right]_k\right|\le f_{\varepsilon}\left(\varrho\right),
	\end{align*}
	where $f_{\varepsilon}(\varrho)$ is defined by
	\begin{align}\label{eq:f}
	f_{\varepsilon}\left(\varrho\right)\bydef \bm{W_h}\left[\varepsilon+h\left(2\varrho^2+\delta\right)\right].
	\end{align}
	Here, $\bm{W_h}>0$ and $\delta> 0$ satisfy
$\sup_{(t,s)\in\mathcal{S}_h}\left\|U(t,s)\right\|_{B (\ell^1)}\le \bm{W_h}$ and
$\left\|F(\ba)\right\|_{X}\le\delta$, respectively.
	If
	\[
	f_{\varepsilon}\left(\varrho\right)\le\varrho
	\]
	holds, then the Fourier coefficients $\ta$ of the solution of \eqref{eq:CGL} are rigorously included in $B_J\left(\ba,\varrho\right)$ and are unique in $B_J\left(\ba,\varrho\right)$.
\end{thm}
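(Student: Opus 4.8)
The plan is to verify that the simplified Newton operator $T$ defined in \eqref{eq:simp_Newton_op} is a contraction on the closed ball $B_J(\ba,\varrho)$ and then invoke the Banach fixed point theorem. Since $X$ is a Banach space and $B_J(\ba,\varrho)$ is a closed subset of $X$ (the affine constraint $a(0)=\varphi$ together with $\|a-\ba\|_X\le\varrho$ defines a closed convex set), it suffices to show (i) $T$ maps $B_J(\ba,\varrho)$ into itself, and (ii) $T$ is Lipschitz on $B_J(\ba,\varrho)$ with constant strictly less than $1$. The hypothesis $f_\varepsilon(\varrho)\le\varrho$ will deliver (i), and a companion estimate on $\|T(a_1)-T(a_2)\|_X$ will deliver (ii).

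First I would establish the self-mapping property. For $a\in B_J(\ba,\varrho)$ we write, using Remark~\ref{rem:simplified_newton_op}, $T(a)-\ba = a - \ba - \mathscr{A}F(a)$, but the cleaner route is to use the form already given: since $a(0)=\varphi=\ba(0)$ up to the initial error, one has $\|T(a)-\ba\|_X \le \|T(a)-T(\ba)\|_X + \|T(\ba)-\ba\|_X$. Here $T(\ba)-\ba = -\mathscr{A}F(\ba)$ modulo the initial-condition term, so using the uniform bound $\|U(t,s)\|_{B(\ell^1)}\le\bm{W_h}$ from Theorem~\ref{thm:sol_map} together with the definition \eqref{eq:definition_of_scriptA} of $\mathscr{A}$, the contribution of the initial discrepancy is bounded by $\bm{W_h}\,\varepsilon$ and the contribution of the forcing term $F(\ba)$ is bounded by $\bm{W_h}\,h\,\delta$ via $\|F(\ba)\|_X\le\delta$. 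The quadratic term is handled through the Banach algebra property \eqref{eq:banach_algebra}: for $a\in B_J(\ba,\varrho)$ the difference $a*a-2\ba*a = (a-\ba)*(a-\ba) - \ba*\ba$ up to sign bookkeeping, and since $\|a-\ba\|_X\le\varrho$ this produces the $2\varrho^2$ term after multiplying by $\bm{W_h} h$ (absorbing $|\expig|=1$). Collecting these gives precisely $\sup_{t\in J}\|[T(a)(t)-\ba(t)]_k\|_{\ell^1}\le \bm{W_h}[\varepsilon+h(2\varrho^2+\delta)] = f_\varepsilon(\varrho)$, which is exactly the assumed bound, and the hypothesis $f_\varepsilon(\varrho)\le\varrho$ then yields $T(B_J(\ba,\varrho))\subseteq B_J(\ba,\varrho)$.

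Next I would prove the contraction estimate. For $a_1,a_2\in B_J(\ba,\varrho)$, the quadratic nature of $T$ gives, after expanding,
\[
T(a_1)-T(a_2) = \mathscr{A}\left[\expig\big((a_1-a_2)*(a_1+a_2-2\ba)\big)\right],
\]
so applying the evolution operator bound $\bm{W_h}$, the factor $h$ from the time integral, and \eqref{eq:banach_algebra} twice yields
\[
\|T(a_1)-T(a_2)\|_X \le \bm{W_h}\, h\, \|a_1+a_2-2\ba\|_X\, \|a_1-a_2\|_X \le 2\bm{W_h}\, h\, \varrho\, \|a_1-a_2\|_X,
\]
since $\|a_1+a_2-2\ba\|_X\le\|a_1-\ba\|_X+\|a_2-\ba\|_X\le 2\varrho$. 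One then observes that the hypothesis $f_\varepsilon(\varrho)\le\varrho$ forces $2\bm{W_h}h\varrho^2\le\varrho$, i.e. $2\bm{W_h}h\varrho\le 1$; in fact because $\bm{W_h}\varepsilon\ge 0$ and $\bm{W_h}h\delta>0$ the inequality is strict unless $\varrho=0$, giving a genuine contraction constant $2\bm{W_h}h\varrho<1$. The Banach fixed point theorem then provides a unique $\ta\in B_J(\ba,\varrho)$ with $T(\ta)=\ta$, and by Remark~\ref{rem:simplified_newton_op} this $\ta$ lies in $C^1(J;D(L))$ and satisfies $\mathscr{A}F(\ta)$-consistency, hence $F(\ta)=0$ with $\ta(0)=\varphi$, so $\ta$ is the Fourier coefficient sequence of the solution of \eqref{eq:CGL}.

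The main obstacle I anticipate is the careful bookkeeping that converts the abstract estimate $\|\mathscr{A}g\|_X\le \bm{W_h}(\varepsilon + h\|g\|_X)$ — which combines the homogeneous term $U(t,s)\phi$ with the Duhamel integral $\int_s^t U(t,\tau)g(\tau)\,d\tau$ and uses $\sup_{\tau\in J}\int_s^t d\tau\le h$ — into the exact form of $f_\varepsilon(\varrho)$, while correctly tracking how the initial error $\varepsilon$ enters through $\phi = a(0)-\ba(0)$ and how $|\expig|=1$ neutralizes the $\expig$ prefactors. The algebraic identity rewriting $a*a-2\ba*a$ in terms of $(a-\ba)$ must also be done cleanly so that only the $\varrho^2$ contribution remains after the $\ba$-dependent pieces are reassigned to $F(\ba)$ (and hence to $\delta$); this is the one spot where a sign or cross-term error would propagate into the final bound.
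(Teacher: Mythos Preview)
Your proposal is correct and follows essentially the same route as the paper: both split $T(a)-\ba$ into a quadratic remainder, an initial-error term bounded by $\bm{W_h}\varepsilon$, and a defect term bounded by $\bm{W_h}h\delta$, then obtain the Lipschitz constant $2\bm{W_h}h\varrho<1$ from $f_\varepsilon(\varrho)\le\varrho$. The only cosmetic difference is that the paper packages the quadratic identities via a mean-value lemma (Lemma~\ref{lem:mean-value-form}) rather than your direct factorizations $(a-\ba)*(a-\ba)$ and $(a_1-a_2)*(a_1+a_2-2\ba)$; for a purely quadratic nonlinearity these are equivalent, and in fact your version yields $\varrho^2$ rather than $2\varrho^2$ in the self-map estimate, which is harmlessly sharper.
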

The proof of this theorem is based on Banach fixed point theorem.
Before proving Theorem~\ref{thm:main_theorem}, we prepare the following lemma:

\begin{lem}\label{lem:mean-value-form}
For a given sequence $\ba$, the discrete convolution follows for two bi-infinite sequences $a=(a_k)_{k\in \Z }$ and $b=(b_k)_{k\in\Z }$
\begin{align*}
a*a-b*b-2\ba*(a-b)=2\left\{\int_0^1\left[\eta\left(a-\ba\right)+(1-\eta)\left(b-\ba\right)\right]d\eta\right\}*(a-b).
\end{align*}
\end{lem}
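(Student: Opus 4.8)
The plan is to verify the identity by a direct computation, exploiting bilinearity of the discrete convolution and the fundamental theorem of calculus applied to a scalar function of $\eta$. The key observation is that $a*a-b*b$ is the difference of a quadratic form evaluated at two points, so it should be expressible as a convolution of the ``increment'' $a-b$ with some ``average'' argument; introducing the shift by $\ba$ is then just algebraic bookkeeping.

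First I would define, for fixed $\eta\in[0,1]$, the interpolating sequence
\[
q(\eta) \bydef \eta a + (1-\eta) b,
\]
so that $q(1)=a$, $q(0)=b$, and $\frac{d}{d\eta}q(\eta) = a-b$ (a constant sequence in $\eta$). Then $q(\eta)*q(\eta)$ is a sequence-valued quadratic polynomial in $\eta$, and by the fundamental theorem of calculus applied component-wise,
\[
a*a - b*b = q(1)*q(1) - q(0)*q(0) = \int_0^1 \frac{d}{d\eta}\bigl(q(\eta)*q(\eta)\bigr)\,d\eta.
\]
Using bilinearity and symmetry of ``$*$'', $\frac{d}{d\eta}(q(\eta)*q(\eta)) = 2\,q(\eta)*(a-b)$, hence $a*a-b*b = 2\int_0^1 q(\eta)*(a-b)\,d\eta$.

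Next I would subtract the term $2\ba*(a-b)$. Since $\int_0^1 d\eta = 1$, we may write $2\ba*(a-b) = 2\int_0^1 \ba*(a-b)\,d\eta$, and then by linearity of the convolution in its first argument and linearity of the integral,
\[
a*a - b*b - 2\ba*(a-b) = 2\int_0^1 \bigl(q(\eta)-\ba\bigr)*(a-b)\,d\eta.
\]
Finally, rewriting $q(\eta)-\ba = \eta a + (1-\eta) b - \ba = \eta(a-\ba) + (1-\eta)(b-\ba)$ (using $\eta + (1-\eta) = 1$) gives exactly the claimed right-hand side, after pulling the $\eta$-independent factor $(a-b)$ out of the integral against the bracketed sequence. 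One should note that all manipulations are justified component-wise: for each fixed $k\in\Z$ the relevant quantities are genuine scalar-valued (here $\C$-valued) integrals of polynomials in $\eta$, so no convergence or interchange subtleties arise beyond the absolute convergence already built into working with $\ell^1$ sequences.

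I do not expect a serious obstacle here; the only points requiring a little care are (i) making precise that the convolution is bilinear and symmetric so that the product rule $\frac{d}{d\eta}(q*q) = 2\,q*\dot q$ holds, and (ii) observing that $a-b$ is constant with respect to $\eta$ so it factors out of the integral. Both are routine, so the ``hard part'' is essentially just presenting the bookkeeping cleanly.
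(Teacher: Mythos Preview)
Your proof is correct and follows essentially the same approach as the paper's: both introduce the interpolation $q(\eta)=\eta a+(1-\eta)b$ and apply the fundamental theorem of calculus to $q(\eta)*q(\eta)$, using bilinearity and symmetry of the convolution. The only cosmetic difference is that the paper packages the correction term as $\frac{d}{d\eta}\bigl(-2\eta\,\ba*(a-b)\bigr)$ inside the integrand from the outset, whereas you subtract $2\ba*(a-b)=2\int_0^1\ba*(a-b)\,d\eta$ afterwards; the computations are otherwise identical.
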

\begin{proof}
From the fundamental theorem of calculus, it is easy to see 
\begin{align*}
a*a-b*b-2\ba*(a-b)
&=\int_0^1\frac{d}{d\eta}\left[\left(\eta a+(1-\eta)b\right)*\left(\eta a+(1-\eta)b\right)-2\eta\ba*(a-b)\right]d\eta\\
&=2\left\{\int_0^1\left(\eta a+(1-\eta)b-\ba\right)d\eta\right\}*(a-b). \qedhere
\end{align*}
\end{proof}

\begin{proof}[Proof of Theorem \ref{thm:main_theorem}]
On the basis of Banach fixed point theorem, we prove that the simplified Newton operator $T$ defined by \eqref{eq:simp_Newton_op} becomes a contraction mapping on $B_J(\ba,\varrho)$.
It is sufficient to show that the following two conditions hold:
\begin{enumerate}
	\item $T(a)\in B_J(\ba,\varrho) $ for any $a\in B_J(\ba,\varrho)$,
	\item there exists $\kappa\in [0,1)$ such that $\mathbf{d}(T(a_1),T(a_2))\le \kappa \mathbf{d}(a_1,a_2)$ for $a_1,a_2\in B_J(\ba,\varrho) $ with a distance $\mathbf{d}$ in $B_J(\ba,\varrho)$.
\end{enumerate}

Firstly, for a sequence $a\in B_J(\ba,\varrho) $ , we have using \eqref{eq:simp_Newton_op}
\begin{align}\label{eq:Ta-ta}
T(a)-\ba&=T(a)-T(\ba)+T(\ba)-\ba\nonumber\\
&= \mathscr{A}\left[\expig(a*a-\ba*\ba-2 \ba*(a-\ba))\right]-\mathscr{A}F\left(\ba\right).
\end{align}
Let $z\bydef a-\ba$.
Using Lemma \ref{lem:mean-value-form} with $b=\ba$, the first term of \eqref{eq:Ta-ta} is given by 
\begin{align*}
\mathscr{A}\left[\expig(a*a-\ba*\ba-2 \ba*(a-\ba))\right]= \mathscr{A}\left[2\expig\int_0^1\eta d\eta(z*z)\right].
\end{align*}
Thus, \eqref{eq:Ta-ta} is represented by
\begin{align*}
T(a)-\ba
&= \mathscr{A}\left(2\expig\int_0^1\eta d\eta(z*z)_k-F_k(\ba)\right)_{k\in\Z },
\end{align*}
where $\mathscr{A}$ is the solution map operator defined in Section \ref{sec:solution_operator} and
\begin{align}\label{eq:defect}
F_k(\ba)=
\begin{cases}
\frac{d}{dt}\ba_k-\expig\left((L\ba)_k+(\ba*\ba)_k\right),&|k|\le N\\[1mm]
-\expig (\ba*\ba)_k,&|k|>N.
\end{cases}
\end{align}
Taking $\ell^1$ norm of $T(a)-\ba$, we have
\begin{align}\label{eq:Ta-ta_norm}
\|T(a)-\ba\|_{\ell^1}&=\sum_{k\in\Z }\left|(T(a)-\ba)_k\right|\nonumber\\
&=\left\| U(t,0)z(0)+\int_0^tU(t,s)g(s)ds\right\|_{\ell^1}\nonumber\\
&\le\left\| U(t,0)z(0)\right\|_{\ell^1}+\int_0^t\left\|U(t,s)g(s)\right\|_{\ell^1}ds,
\end{align}
where
\begin{equation}\label{eq:g(s)}
g(s)\bydef2\expig\int_0^1\eta d\eta(z(s)*z(s))-\left(F(\ba)\right)(s).
\end{equation}
Taking $\ell^1$-norm of $g$, we have using the property \eqref{eq:banach_algebra}
\begin{align}\label{eq:g(s)_norm}\nonumber
\left\|g(s)\right\|_{\ell^1}&\le \sum_{k\in\Z }\left|2\expig(z(s)*z(s))_k\right|+\left\|\left(F(\ba)\right)(s)\right\|_{\ell^1}\\
&\le 2\|z(s)\|_{\ell^1}^2+\delta,
\end{align}
where $\delta$ satisfies $\sup_{s\in J}\left\|\left(F(\ba)\right)(s)\right\|_{\ell^1}\le\delta$.
Since  $a\in B_J(\ba,\varrho)$, $\|z\|_{X}\le\varrho$ holds.
Finally, \eqref{eq:Ta-ta_norm} is bounded by using the uniform bound $\bm{W_h}$ \eqref{eq:W_h_constant} discussed in the previous section as
\begin{align*}
\sup_{t\in J}\sum_{k\in\Z }\left|\left((T(a))(t)-\ba(t)\right)_k\right|&\le \sup_{t\in J}\left\| U(t,0)z(0)\right\|_{\ell^1}+ \sup_{t\in J}\int_0^t\left\|U(t,s)g(s)\right\|_{\ell^1}ds\\
&\le \bm{W_h} \left[\varepsilon+h\left(2\varrho^2+\delta\right)\right]=f_{\varepsilon}\left(\varrho\right),
\end{align*}
where $\varepsilon$ is the upper bound of the initial error such that $\left\|z(0)\right\|_{\ell^1}\le \varepsilon$.
From the assumption $f_{\varepsilon}(\varrho)\le\varrho$, $T(a)\in B_J(\ba,\varrho) $ holds for any $a\in B_J(\ba,\varrho) $.

Secondly, we will show the contraction property of $T$.
For sequences $a_1,a_2\in B_J(\ba,\varrho)$, we define the distance in $B_J(\ba,\varrho)$ as
\[
	\mathbf{d}(a_1,a_2)\bydef \left\|a_1-a_2\right\|_X.
\]
The analogous discussion above yields
\begin{align}\label{eq:Ta-Tb}
T(a_1)-T(a_2) = \mathscr{A}\left[\expig\left(a_1*a_1-a_2*a_2-2\ba*(a_1-a_2)\right)\right].
\end{align}
Let $\zeta\bydef a_1-a_2$ with $\zeta(0)=0$. It follows
\begin{align}\label{eq:Ta-Tb_norm}
\left\|T(a_1)-T(a_2)\right\|_{\ell^1}=\sum_{k\in\Z }\left|\left(T(a_1)-T(a_2)\right)_k\right|
\le\int_0^t\left\|U(t,s)\tilde{g}(s)\right\|_{\ell^1}ds,
\end{align}
where $\tilde{g}$ is defined using Lemma \ref{lem:mean-value-form} with $a=a_1$ and $b=a_2$ by
\begin{align*}
\tilde{g}(s)&\bydef2\expig\left(\int_0^1\left[\eta (a_1-\ba)+(1-\eta)(a_2-\ba)\right]d\eta*\zeta\right).
\end{align*}
Since $B_J(\ba,\varrho)$ is convex, $\eta (a_1-\ba)+(1-\eta)(a_2-\ba)\in B_J(0,\varrho)$ holds for any $\eta\in(0,1)$.
Then, from \eqref{eq:Ta-Tb} and \eqref{eq:Ta-Tb_norm}, the distance is estimated by
\begin{align*}
\mathbf{d}\left(T(a_1),T(a_2)\right)\le\left(2\bm{W_h} h \varrho\right)\mathbf{d}(a_1,a_2).
\end{align*}
Taking $\kappa = 2\bm{W_h}h\varrho$,
it follows $\kappa <f_{\varepsilon}(\varrho)/\varrho\le1$ from the assumption of theorem.
It is proved that the simplified Newton operator $T$ becomes the contraction mapping on $B_J(\ba,\varrho)$.
\end{proof}

\begin{rem}
Our task in the practical implementation is to rigorously compute the minimum values $\varrho$ 
such that
$f_{\varepsilon}\left(\varrho\right)\le\varrho$
by using interval arithmetic.
\end{rem}

\subsection{The bound \boldmath$\varepsilon$\unboldmath}
We show how we get the $\varepsilon$ bound such that $\|a(0)-\ba(0)\|_{\ell^1}\le\varepsilon$.
From \eqref{eq:app_sol}
\begin{align*}
\ba_k(0)=\ba_{0,k} + 2\sum_{\ell=1}^{n-1}\ba_{\ell,k}T_{\ell}(0)=\ba_{0,k}-2\ba_{1,k}+2\ba_{2,k}-\dots+(-1)^{n-1}2\ba_{n-1,k}\quad(|k|\le N),
\end{align*}
where we used the fact $T_{\ell}(0)=(-1)^{\ell}$.
Then, using interval arithmetic, $\varepsilon$ is given by
\begin{align*}
\varepsilon\bydef\sum_{|k| \le N}\left|\varphi_k-\left(\ba_{0,k}-2\ba_{1,k}+2\ba_{2,k}-\dots+(-1)^{n-1}2\ba_{n-1,k}\right)\right|.
\end{align*}

\subsection{The bound \boldmath$\delta$\unboldmath}\label{sec:defect_bound}
We also show how we get the defect bound of $F$ at the approximate solution $\ba$.
From \eqref{eq:defect}, we recall
\[
F_k(\ba)=
\begin{cases}
\frac{d}{dt}\ba_k-\expig\left(-k^2\omega^2\ba_k+(\ba*\ba)_k\right),&|k|\le N,\\[1mm]
-\expig (\ba*\ba)_k,&|k|>N.
\end{cases}
\]
Here, we suppose that the first derivative of $\ba$ is expressed by
\[
\frac{d}{dt}\ba_k(t)=\sum_{\ell=0}^{n-2} \ba_{\ell,k}^{(1)}T_{\ell}(t),
\]
where $\ba_{\ell,k}^{(1)}\in\mathbb{C}$ can be computed by an recursive algorithm (see, e.g., \cite[page 34]{bib:mason2002}).
For $|k|\le N$, we have
\begin{align*}
	F_k(\ba) &= \frac{d}{dt}\ba_k-\expig\left(-k^2\omega^2\ba_k+(\ba*\ba)_k\right)\\
	&= \sum_{\ell=0}^{n-2} \ba_{\ell,k}^{(1)}T_{\ell}(t)+\sum_{\ell=0}^{n-1}\left(\expig k^2\omega^2\mathfrak{w}_{\ell}\ba_{\ell,k}-\sum_{{\ell_1+\ell_2 = \pm\ell \atop k_1 + k_2 = k} \atop |\ell_i| <n , |k_i| \le N}\ba_{|\ell_1|,k_1}\ba_{|\ell_2|,k_2}\right)T_{\ell}(t)\\
	&\hphantom{=}\quad - \sum_{\ell\ge n}\left(\sum_{{\ell_1+\ell_2 = \pm\ell \atop k_1 + k_2 = k} \atop |\ell_i| <n , |k_i| \le N}\ba_{|\ell_1|,k_1}\ba_{|\ell_2|,k_2}\right)T_{\ell}(t),
\end{align*}
where $\mathfrak{w}_{\ell}$ denotes a weight for the Chebyshev coefficients such that
\[
	\mathfrak{w}_{\ell}=\begin{cases}
	1, & \ell=0\\
	2, & \ell> 0.
	\end{cases}
\]
It then follows
\begin{align*}
&\left|\frac{d}{dt}\ba_k-\expig\left(-k^2\omega^2\ba_k+(\ba*\ba)_k\right) \right|\\
&\le \sum_{\ell=0}^{n-2} \left|\ba_{\ell,k}^{(1)}+\expig k^2\omega^2\mathfrak{w}_{\ell}\ba_{\ell,k}-\sum_{{\ell_1+\ell_2 = \pm\ell \atop k_1 + k_2 = k} \atop |\ell_i| <n , |k_i| \le N}\ba_{|\ell_1|,k_1}\ba_{|\ell_2|,k_2}\right|+\left|2\expig k^2\omega^2\ba_{n-1,k}-\sum_{{\ell_1+\ell_2 = \pm(n-1) \atop k_1 + k_2 = k} \atop |\ell_i| <n , |k_i| \le N}\ba_{|\ell_1|,k_1}\ba_{|\ell_2|,k_2}\right|\\
&\hphantom{\le}\quad + \sum_{\ell\ge n}\left|\sum_{{\ell_1+\ell_2 = \pm\ell \atop k_1 + k_2 = k} \atop |\ell_i| <n , |k_i| \le N}\ba_{|\ell_1|,k_1}\ba_{|\ell_2|,k_2}\right|.
\end{align*}

Furthermore, for $|k|>N$, the tail part is given by
\[
	\left|(\ba*\ba)_k\right| \le\sum_{\ell\ge 0}\left| \sum_{{\ell_1+\ell_2 = \pm\ell \atop k_1 + k_2 = k} \atop |\ell_i| <n , |k_i| \le N}\ba_{|\ell_1|,k_1}\ba_{|\ell_2|,k_2}\right|.
\]

Finally, the defect bound $\delta$ is given by
\begin{align*}
&\sup_{t\in J}\left\|(F(\ba))(t)\right\|_{\ell^1}\\
&=\sup_{t\in J}\left(\sum_{|k| \le N}\left|F_k(\ba)\right| + \sum_{|k|>N}\left|F_k(\ba)\right|\right)\\
&\le \sum_{|k| \le N}\left(
\sum_{\ell=0}^{n-2} \left|\ba_{\ell,k}^{(1)}+\expig k^2\omega^2\mathfrak{w}_{\ell}\ba_{\ell,k}-\sum_{{\ell_1+\ell_2 = \pm\ell \atop k_1 + k_2 = k} \atop |\ell_i| <n , |k_i| \le N}\ba_{|\ell_1|,k_1}\ba_{|\ell_2|,k_2}\right|\right.\\
&\hphantom{\sum_{|k| \le N}\Bigg(}\left.\quad+\left|2\expig k^2\omega^2\ba_{n-1,k}-\sum_{{\ell_1+\ell_2 = \pm(n-1) \atop k_1 + k_2 = k} \atop |\ell_i| <n , |k_i| \le N}\ba_{|\ell_1|,k_1}\ba_{|\ell_2|,k_2}\right|+ \sum_{\ell\ge n}\left|\sum_{{\ell_1+\ell_2 = \pm\ell \atop k_1 + k_2 = k} \atop |\ell_i| <n , |k_i| \le N}\ba_{|\ell_1|,k_1}\ba_{|\ell_2|,k_2}\right|
\right)\\
&\quad + \sum_{|k|>N}\sum_{\ell\ge 0}\left| \sum_{{\ell_1+\ell_2 = \pm\ell \atop k_1 + k_2 = k} \atop |\ell_i| <n , |k_i| \le N}\ba_{|\ell_1|,k_1}\ba_{|\ell_2|,k_2}\right|\bydef\delta.
\end{align*}
This seems to be infinite sum but thanks to the finiteness of nonzero elements of $\ba_{\ell,k}$.
It becomes finite sum and can be rigorously computed based on interval arithmetic.
For the rigorous computing of the discrete convolution, one can consult a FFT based algorithm by, e.g., \cite{Lessard2018}.

\section{Time stepping scheme}\label{sec:timestepping}


Once the rigorous inclusion of Fourier coefficients is obtained, we consider extending the time interval, say \emph{time step}, in which the existence of the solution is verified.
For this purpose the initial sequence in the next time step is replaced by a sequence at the endpoint of the current time step (e.g., $a(h)$ for the first time step).
Replacing $J=(h,2h)$, we apply Theorem \ref{thm:main_theorem} for the initial-boundary value problem on the next time step and recursively repeat this process several times.
We introduce such a time stepping scheme in this section.

For $K\in\N$, let $0=t_0<t_1<\dots<t_K<\infty$ be grid points of the time variable.
We call $J_{i}=(t_{i-1},t_{i})$ the $i$\,th time step and let $h_i=t_{i}-t_{i-1}$ ($i=1,2,\dots,K$) be the step size.
Now we assume that the solution $a(t)=(a_k(t))_{k\in\Z}$ of \eqref{eq:CGL_ode} is rigorously included until $J_K$, i.e., 
\[a(t)\in B_{J_i}\left(\ba,\varrho_i\right)\bydef\left\{a:\|a-\ba\|_{C(J_i;\ell^1)} \le \varrho_i~\mbox{with initial data}~a(t_{i-1})\right\} \quad (t\in J_i)
\]
holds for some $\varrho_i>0$.
In the following, we derive the error bound $\varepsilon_{i}$ ($i=1,2,\dots,K$) at each endpoint of time interval, which satisfies $\|a(t_{i})-\ba(t_{i})\|_{\ell^1}\le\varepsilon_i$.
We call such an error estimate the \emph{point-wise error estimate}.

Let us show the case of first time step.
Recall that $z(t)= a(t)-\ba(t)$ ($t\in J_1$).
At the endpoint of the time step $t= t_1$, it follows from \eqref{eq:Ta-ta_norm}
\begin{equation}\label{eq:err_step1}
	\|z(t_1)\|_{\ell^1}\le\left\|U(t_1,t_0)z(t_0)\right\|_{\ell^1}+\int_{t_0}^{t_1}\left\|U(t_1,s)g(s)\right\|_{\ell^1}ds,
\end{equation}
where $g(s)$ is defined in \eqref{eq:g(s)} and $\left\|g(s)\right\|_{\ell^1}\le 2\varrho_1^2+\delta_1$ holds from \eqref{eq:g(s)_norm}.
Here, the positive constant $\delta_1$ satisfies $\sup_{s\in J_1}\left\|\left(F(\ba)\right)(s)\right\|_{\ell^1} \le \delta_1$, which is given in Section \ref{sec:defect_bound}.
We assume that the initial error is bounded by $\|z(t_0)\|_{\ell^1}\le\varepsilon_0$, which is equal to the hypothesis of Theorem \ref{thm:main_theorem} in Section \ref{sec:Localinclusion}.
Then, to derive the bound $\varepsilon_{1}$, we need two positive constants ${W_{J_1}}>0$ and ${W_{t_1}}>0$ such that
$\sup_{s\in J_1}\|U(t_1,s)\|_{B(\ell^1)}\le W_{J_1}$ and $\|U(t_1,t_0)\|_{B(\ell^1)}\le W_{t_1}$, respectively.

We reconsider the linearized problem \eqref{eq:linearized_problem} and represent the solution of  \eqref{eq:linearized_problem} as $b_s(t)\equiv b(t)$.
As the analogous discussion in Section \ref{sec:solution_operator}, we split the solution $b_s(t)$ into the finite mode $b^{(m)}_s$ and the tail mode $b^{(\infty)}_s$.
The finite mode is given by plugging $t=t_1$ in \eqref{eq:b^{m}_integral_equation}
\[
	b^{(m)}_s(t_1)=\bU^{(m)}(t_1, s) \phi^{(m)}+2 \expig\int_{s}^{t_1} \bU^{(m)}(t_1, \tau) \pi^{(m)}\left(\ba(\tau) * b^{(\infty)}_s(\tau)\right) d \tau.
\]
By the definition of $\bU^{(m)}(t,s)$ given in \eqref{eq:bU_m_definition}, $\|\bU^{(m)}(t,s)\|_{B(\ell^1)}=\|C^{(m)}(t,s)\|_1$ holds for $(t,s)\in\mathcal{S}_h$.
Furthermore, using the form $C^{(m)}(t_1, s)=\Phi(t_1)\Psi(s)$ defined in \eqref{eq:Cm_decomposition}, the property \eqref{eq:banach_algebra} and \eqref{eq:b_inf_norm}, we have
\begin{align*}
\left\|b_s^{(m)}(t_1)\right\|_{\ell^1}
&\leq \left\|\Phi(t_1)\right\|_{1}\left(\|\Psi(s)\|_1\left\|\phi^{(m)}\right\|_{\ell^1}+2 \int_{s}^{t_1}\|\Psi(\tau)\|_1\left\|\ba^{(\bm{s})}(\tau)\right\|_{\ell^1}\left\|b^{(\infty)}_s(\tau)\right\|_{\ell^1} d \tau\right)\\
&\leq \left\|\Phi(t_1)\right\|_{1}\left(\|\Psi(s)\|_1\left\|\phi^{(m)}\right\|_{\ell^1}+2h_1 \sup_{\tau\in J_1}\|\Psi(\tau)\|_1\left\|\ba^{(\bm{s})}\right\|_{X}\left\|b^{(\infty)}_s\right\|_{X}\right)\\
&\leq \left\|\Phi(t_1)\right\|_{1}\Bigg\{\|\Psi(s)\|_1\left\|\phi^{(m)}\right\|_{\ell^1}+2h_1 \sup_{\tau\in J_1}\|\Psi(\tau)\|_1\left\|\ba^{(\bm{s})}\right\|_{X}\cdot\\
&\hphantom{\leq}\quad\left[2W_mW_\infty\|\ba^{(\bm{s})}\|_X\kappa^{-1}\left\|\phi^{(m)}\right\|_{\ell^1}+\left(W_{\infty}^{\sup}+4 W_mW_\infty^{2}\|\ba^{(\bm{s})}\|_{X}^{2}\kappa^{-1}\right)\left\|\phi^{(\infty)}\right\|_{\ell^{1}}\right]\Bigg\}.
\end{align*}
Taking the supremum norm with respect to $s$, it follows
\begin{align}\label{eq:bs_finite}\nonumber
\sup_{s\in J_1}\left\|b_s^{(m)}(t_1)\right\|_{\ell^1}
&\leq \left\|\Phi(t_1)\right\|_{1}\sup_{s\in J_1}\|\Psi(s)\|_1\left(1+4h_1W_mW_\infty\left\|\ba^{(\bm{s})}\right\|^2_{X}\kappa^{-1}\right)\left\|\phi^{(m)}\right\|_{\ell^1}\\
&\hphantom{\leq}\quad +2\left\|\Phi(t_1)\right\|_{1}h_1 \sup_{s\in J_1}\|\Psi(s)\|_1 \left\|\ba^{(\bm{s})}\right\|_{X}\left(W_{\infty}^{\sup}+4 W_mW_\infty^{2}\|\ba^{(\bm{s})}\|_{X}^{2}\kappa^{-1}\right)\left\|\phi^{(\infty)}\right\|_{\ell^{1}}.
\end{align}
Let
\[
	W_{J_1}^{(1,1)}\bydef \left\|\Phi(t_1)\right\|_{1}\sup_{s\in J_1}\|\Psi(s)\|_1\left(1+4h_1W_mW_\infty\left\|\ba^{(\bm{s})}\right\|^2_{X}\kappa^{-1}\right)
\]
and
\[
	W_{J_1}^{(1,2)}\bydef2\left\|\Phi(t_1)\right\|_{1}h_1 \sup_{s\in J_1}\|\Psi(s)\|_1 \left\|\ba^{(\bm{s})}\right\|_{X}\left(W_{\infty}^{\sup}+4 W_mW_\infty^{2}\|\ba^{(\bm{s})}\|_{X}^{2}\kappa^{-1}\right).
\]

Next, the tail mode is given by plugging $t=t_1$ in \eqref{eq:b^{infty}_integral_equation}
\[
b^{(\infty)}_s(t_1)=\bU^{(\infty)}(t_1, s) \phi^{(\infty)}+2 \expig \int_{s}^{t_1} \bU^{(\infty)}(t_1, \tau) ({\rm Id} -\pi^{(m)})\left(\ba(\tau)*b^{(m)}_s(\tau)\right) d \tau.
\]
Using the bound \eqref{eq:ev_op_estimate2}, the property \eqref{eq:banach_algebra} and \eqref{eq:b_m_norm}, we also have
\begin{align*}
&\left\|b^{(\infty)}_s(t_1)\right\|_{\ell^1}\\
&\le W^{(\infty)}(t_1,s)\left\|\phi^{(\infty)}\right\|_{\ell^1}+2\int_{s}^{t_1} W^{(\infty)}(t_1, \tau)\left\|\ba^{(\bm{s})}(\tau)\right\|_{\ell^1}\left\|b^{(m)}_s(\tau)\right\|_{\ell^1} d \tau\\
&\le W^{(\infty)}(t_1,s)\left\|\phi^{(\infty)}\right\|_{\ell^1}+2W_{\infty}\left\|\ba^{(\bm{s})}\right\|_{X}\left\|b^{(m)}_s\right\|_{X}\\
&\le W^{(\infty)}(t_1,s)\left\|\phi^{(\infty)}\right\|_{\ell^1}+2W_{\infty}\left\|\ba^{(\bm{s})}\right\|_{X}\left(W_m\kappa^{-1}\left\|\phi^{(m)}\right\|_{\ell^1}+2 W_mW_\infty\|\ba^{(\bm{s})}\|_{X}\kappa^{-1}\left\|\phi^{(\infty)}\right\|_{\ell^1}\right).
\end{align*}
Taking the supremum norm with respect to $s$, it follows
\begin{align}\label{eq:bs_tail}
\sup_{s\in J_1}\left\|b_s^{(\infty)}(t_1)\right\|_{\ell^1}\leq \left(2W_mW_{\infty}\left\|\ba^{(\bm{s})}\right\|_{X}\kappa^{-1} \right)\left\|\phi^{(m)}\right\|_{\ell^1}+\left(W_{\infty}^{\sup}+4W_mW_{\infty}^2\left\|\ba^{(\bm{s})}\right\|_{X}^2\kappa^{-1}\right)\left\|\phi^{(\infty)}\right\|_{\ell^{1}}.
\end{align}
Moreover, let us define
\[
	W_{J_1}^{(2,1)}\bydef 2W_mW_{\infty}\left\|\ba^{(\bm{s})}\right\|_{X}\kappa^{-1}\quad
	\mbox{and}\quad
	W_{J_1}^{(2,2)}\bydef W_{\infty}^{\sup}+4W_mW_{\infty}^2\left\|\ba^{(\bm{s})}\right\|_{X}^2\kappa^{-1}.
\]

Summing up \eqref{eq:bs_finite} and \eqref{eq:bs_tail}, we get the $W_{J_1}$ bound
\begin{align*}
\sup_{s\in J_1}\|U(t_1,s)\phi\|_{\ell^1}
&\le\sup_{s\in J_1}\left\|b_s^{(m)}(t_1)\right\|_{\ell^1}+\sup_{s\in J_1}\left\|b_s^{(\infty)}(t_1)\right\|_{\ell^1}\\
&=\left\|\begin{bmatrix}
\sup_{s\in J_1}\left\|b_s^{(m)}(t_1)\right\|_{\ell^1}\\[2mm]
\sup_{s\in J_1}\left\|b_s^{(\infty)}(t_1)\right\|_{\ell^1}
\end{bmatrix}\right\|_1\\
&\le\left\|\begin{bmatrix}
W_{J_1}^{(1,1)} & W_{J_1}^{(1,2)}\\[1mm]
W_{J_1}^{(2,1)} & W_{J_1}^{(2,2)}
\end{bmatrix}\begin{bmatrix}
\|\phi^{(m)}\|_{\ell^1}\\\|\phi^{(\infty)}\|_{\ell^1}
\end{bmatrix}\right\|_1\le W_{J_1}\|\phi\|_{\ell^1},\quad\forall\phi\in\ell^1,
\end{align*}
where
\[
	W_{J_1}\bydef\left\|\begin{bmatrix}
	W_{J_1}^{(1,1)} & W_{J_1}^{(1,2)}\\[1mm]
	W_{J_1}^{(2,1)} & W_{J_1}^{(2,2)}
	\end{bmatrix}\right\|_1.
\]

Setting $s=t_1$, \eqref{eq:bs_finite} and \eqref{eq:bs_tail} are changed by
\begin{align}\label{eq:bt1_finite}\nonumber
\left\|b_{t_0}^{(m)}(t_1)\right\|_{\ell^1}
&\leq \left\|\Phi(t_1)\right\|_{1}\left(1+4h_1\sup_{s\in J_1}\|\Psi(s)\|_1W_mW_\infty\left\|\ba^{(\bm{s})}\right\|^2_{X}\kappa^{-1}\right)\left\|\phi^{(m)}\right\|_{\ell^1}\\
&\hphantom{\leq}\quad +2\left\|\Phi(t_1)\right\|_{1} h_1\left\|\ba^{(\bm{s})}\right\|_{X}\left(W_{\infty}^{\sup}+4 W_mW_\infty^{2}\|\ba^{(\bm{s})}\|_{X}^{2}\kappa^{-1}\right)\left\|\phi^{(\infty)}\right\|_{\ell^{1}}.
\end{align}
and
\begin{align}\label{eq:bt1_tail}
\left\|b_{t_0}^{(\infty)}(t_1)\right\|_{\ell^1}\leq \left(2W_mW_{\infty}\left\|\ba^{(\bm{s})}\right\|_{X}\kappa^{-1} \right)\left\|\phi^{(m)}\right\|_{\ell^1}+\left(W^{(\infty)}(t_1,t_0)+4W_mW_{\infty}^2\left\|\ba^{(\bm{s})}\right\|_{X}^2\kappa^{-1}\right)\left\|\phi^{(\infty)}\right\|_{\ell^{1}},
\end{align}
respectively.
The analogous discussion yields the $W_{t_1}$ bound by
\[
W_{t_1}\bydef\left\|\begin{bmatrix}
W_{t_1}^{(1,1)} & W_{t_1}^{(1,2)}\\[1mm]
W_{t_1}^{(2,1)} & W_{t_1}^{(2,2)}
\end{bmatrix}\right\|_1,
\]
where
\[
	W_{t_1}^{(1,1)}\bydef \left\|\Phi(t_1)\right\|_{1}\left(1+4h_1\sup_{s\in J_1}\|\Psi(s)\|_1W_mW_\infty\left\|\ba^{(\bm{s})}\right\|^2_{X}\kappa^{-1}\right),
\]
\[
	W_{t_1}^{(1,2)}\bydef 2\left\|\Phi(t_1)\right\|_{1} h_1\left\|\ba^{(\bm{s})}\right\|_{X}\left(W_{\infty}^{\sup}+4 W_mW_\infty^{2}\|\ba^{(\bm{s})}\|_{X}^{2}\kappa^{-1}\right),
\]
\[
	W_{t_1}^{(2,1)}\bydef 2W_mW_{\infty}\left\|\ba^{(\bm{s})}\right\|_{X}\kappa^{-1},\quad\mbox{and}\quad W_{t_1}^{(2,2)}\bydef W^{(\infty)}(t_1,t_0)+4W_mW_{\infty}^2\left\|\ba^{(\bm{s})}\right\|_{X}^2\kappa^{-1}.
\]

Consequently, the point-wise error is estimated by
\begin{align*}
	\|z(t_1)\|_{\ell^1}&\le\left\|U(t_1,t_0)z(t_0)\right\|_{\ell^1}+\int_{t_0}^{t_1}\left\|U(t_1,s)g(s)\right\|_{\ell^1}ds\\
	&\le W_{t_1}\varepsilon_0+W_{J_1}h_1\left(2\varrho_1+\delta_1\right)= \varepsilon_{1}.
\end{align*}

For the next time step, updating $\varepsilon\equiv\varepsilon_{1}$, we apply Theorem \ref{thm:main_theorem} on $J_2$ and derive the point-wise error $\varepsilon_2$ at the endpoint recursively.
By repeating the time stepping, we have $\varepsilon_{i}= W_{t_i}\varepsilon_{i-1}+W_{J_i}h_i\left(2\varrho_i+\delta_i\right)$ ($i=1,2,\dots,K$).

\begin{rem}
	The point-wise error $\varepsilon_{i}$ can be smaller than the previous one, i.e., $\varepsilon_{i}\le\varepsilon_{i-1}$ holds when $W_{t_i}<1$ with sufficiently small $\varrho_i$ and $\delta_i$.
	This implies that the diffusion property of solution makes the point-wise error small.
	
	Furthermore, we note that the tiny numerical error may include at the end point.
	That is, the value of approximate solution at $t=t_1$ in $J_1$, say $\ba^{J_1}(t_1)$, and that in $J_2$, say $\ba^{J_2}(t_1)$, may be different.
	It is because the Chebyshev polynomial approximates a function globally in each time interval.
	In such a case, we should take care of the numerical error by using the following form:
	\begin{align*}
	\epsilon_1 &\bydef\left\|\ba^{J_1}(t_1)-\ba^{J_2}(t_1)\right\|_{\ell^1}\\
	&= \sum_{|k| \le N}\left|\ba^{J_1}_{0,k}+2\ba^{J_1}_{1,k}+2\ba^{J_1}_{2,k}+\dots+2\ba^{J_1}_{n-1,k} - (\ba^{J_2}_{0,k}-2\ba^{J_2}_{1,k}+2\ba^{J_2}_{2,k}-\dots+(-1)^{n-1}2\ba^{J_2}_{n-1,k})\right|.
	\end{align*}
	Hence, we should add $\epsilon_1$ in the point-wise error, i.e., $\varepsilon_{1}= W_{t_1}\varepsilon_0+W_{J_1}h_1\left(2\varrho_1+\delta_1\right) + \epsilon_1$.
\end{rem}

\section{Global existence in time} \label{sec:center-stable-manifold}
After several time steppings, we prove global existence of the solution.
That is related to calculation of a center-stable manifold arising from the Cauchy problem \eqref{eq:CGL_ode}.
\subsection{Calculating part of a center-stable manifold}
Starting from the PDE \eqref{eq:CGL}
we consider the system of differential equations in $ \ell^1$ given by \eqref{eq:CGL_ode}
In this section, we use the Lyapunov-Perron method for computing a foliation of portion of the center-stable manifold of the equilibrium at $ a \equiv 0$.  
A good reference for this method in ODEs is 
\cite{chicone2006ODE}, and for PDEs see \cite{sell2002dynamics}.  
In \cite{BergPREPApproximating} this method is applied to give computer-assisted proofs of the stable manifold theorem in the Swift-Hohenberg PDE.

Let us define subspaces 
\begin{align*}
X_c &\bydef \{a \in \ell^1 | a_k = 0 \; \forall k \neq 0 \}   \\ 
X_s &\bydef \{a \in \ell^1 | a_0 = 0 \} 	.
\end{align*}
We may then rewrite \eqref{eq:CGL_ode} as the following system: 
\begin{align}
\label{eq:CenterProjectedEquation}	
\dot{\xx}_c & = \cN_c( \xx_c , \xx_s )   \\ 
\dot{\xx}_s & = \fL \xx_s  + \cN_s( \xx_c , \xx_s ) ,
\label{eq:StableProjectedEquation}	
\end{align}
where for $ a = ( \xx_c , \xx_s ) $ we define:
\begin{align*}
(\fL a)_k &\bydef - e^{i \theta } k^2\omega^2 a_k ,
&
\cN_c(a_c,a_s) &\bydef e^{i \theta } \sum_{k=0}^\infty a_k a_{-k},
&
\left(\cN_s(a_c,a_s)\right)_k &\bydef e^{i \theta } \sum_{\substack{k_1 + k_2 = k \\ k_1,k_2 \in \Z}} a_{k_1} a_{k_2} .
\end{align*}
Note $ \|e^{\fL t}\|_{B(\ell^1)} \leq e^{- \omega^2 \cos \theta \;t}$, $|\cN_c(\xx_c,\xx_s)| \leq |\xx_c|^2 + \|\xx_s\|_{\ell^1}^2$ and $\|\cN_s(\xx_c,\xx_s)\|_{\ell^1} \leq 2 |\xx_c| \|\xx_s\|_{\ell^1} + \|\xx_s\|_{\ell^1}^2$.
To abbreviate, let us define $ \mu = \omega^2\cos \theta$.

For the equilibrium at zero, the center manifold is precisely $ X_c$. 
We can solve \eqref{eq:CenterProjectedEquation} restricted to this subspace: 
\begin{align}
\dot{\xx}_c = e^{i \theta } \xx_c^2 , \label{eq:CenterRestrictedDE}	
\end{align}
using the fact that the differential equation is separable.  
For an initial condition  $ \phi \in \C $ then the solution of  \eqref{eq:CenterRestrictedDE} is given by: 
\[
\Phi(t,\phi) \bydef \frac{\phi}{1 - \phi t e^{i \theta}}.
\]
Note that if $\mathrm{Re}( \phi e^{i \theta}) >0$ and  $\mathrm{Im}( \phi e^{i \theta}) =0$, then the solution $\Phi(t,\phi)$ blows up in finite time.

For  $ r_c , r_s >0$ let us define the following sets 
\begin{align*}
B_c(r_c) &= \{ \xx_c \in X_c : | \xx_c | \leq r_c, \mathrm{Re}(e^{i \theta} \xx_c)  \leq 0  \} \\
B_s(r_s) &= \{ \xx_s \in X_s : \| \xx_s \|_{\ell^1} \leq r_s \}.
\end{align*}
Note that if $ \phi \in B_c(r_c)$ then  $ \Phi(t) \in B_c(r_c)$ for all $ t \geq 0 $, and additionally $ | \Phi(t,\phi)| \leq r_c$. 
For a fixed $\rho \in \R_+$ we define the following set of functions:
\begin{align*}
\cB &= \left\{ \alpha \in \mbox{Lip}\left( B_c(r_c) \times B_s(r_s) , X_c \right) : \alpha (\xx_c,0)=\xx_c, \; | \alpha (\xx_c,\xx_1) - \alpha (\xx_c,\xx_2)| \leq \rho | \xx_1 - \xx_2|  \right\}.
\end{align*}

Continuing with the Lyapunov-Perron method, for a fixed $ \alpha \in \cB, \phi \in B_c(r_c), \xi \in B_s(r_s)$, we define $ x(t,\phi,\xi,\alpha)
$ as a solution with initial conditions $(\phi , \xi)$ of the  differential equation below:
\begin{align} 
\dot{\xx}_s &= \fL \xx_s + \cN_s \left( \alpha ( \Phi(t,\phi) ,\xx_s) , \xx_s \right) . 
\label{eq:ProjectedSystem}
\end{align}

Now we define the Lyapunov-Perron Operator  for $ \alpha \in \cB$ as follows:
\begin{align}
\Psi [\alpha ] ( \phi , \xi) &= - \int_0^\infty \cN_c \left( \alpha \big( \Phi(t,\phi)  , x(t,\phi,\xi,\alpha) \big) ,x(t,\phi,\xi , \alpha ) \right) dt .
\end{align}
In Proposition \ref{prop:Endomorphism}, we show that $ \Psi : \cB\to \cB$ is a well defined operator. 
As is the case for  Lyapunov-Perron operators, from the variation of constants formula, if $ \Psi[\alpha]=\alpha$, then for all $ (\phi,\xi) \in B_c\times B_s$, the trajectory $\alpha( \Phi(t,\phi),x(t,\phi,\xi,\alpha)
)$ satisfies  \eqref{eq:CenterProjectedEquation}. 
Hence $(\alpha( \Phi(t,\phi),x(t,\phi,\xi,\alpha)
), x(t,\phi,\xi,\alpha)
)$ satisfies our original equation \eqref{eq:CGL_ode}. 

Since $ \Phi(t,\phi)$ limits to zero, such a fixed point $ \alpha = \Psi[\alpha]$ gives us a foliation of the center stable manifold over $ B_c(r)$. 
By Corollary \ref{prop:AlphaImage} we obtain an explicit neighborhood within which all points limit to the zero equilibrium. 
To prove the existence of a fixed point to our Lyapunov-Perron operator, and obtain explicit bounds, we prove the following theorem.  
\begin{thm}
	\label{prop:MainGlobalTheorem}
	Fix $ r_c ,r_s, \rho >0$ and define the following constants
	\begin{align*}
	\delta_1 &\bydef 2 r_c+ (1+ 2\rho ) r_s, \\
	\delta_2 &\bydef 2 r_c+ 2(1+ \rho ) r_s, \\
	\delta_3 &\bydef  2 (\rho (r_c + \rho r_s) +   r_s ),\\
	\delta_4 &\bydef 2 (r_c + 2 \rho r_s + r_s),\\
	\lambda &\bydef \frac{ 2 (  \rho(r_c+ \rho r_s) + r_s )    2 r_s	}{(\mu-\delta_1)(\mu-\delta_4)}
	+ \frac{2( r_c + \rho r_s)    }{\mu-\delta_1} .
	\end{align*}
	If the following inequalities are satisfied 
	\begin{align}
	\delta_1, \delta_2 ,\delta_4&< \mu, &
	\frac{\delta_3}{\mu - \delta_2} & < \rho ,
	&
	\lambda&<1,
	\end{align}
	then there is a unique map $ \alpha \in \cB$ such that $ \Psi[\alpha] = \alpha$. 
	
\end{thm}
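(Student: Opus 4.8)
The plan is to set up the Banach fixed point theorem on $\cB$. Since Proposition~\ref{prop:Endomorphism} already asserts — under the hypotheses $\delta_1,\delta_2,\delta_4<\mu$ and $\delta_3/(\mu-\delta_2)<\rho$ — that $\Psi$ maps $\cB$ into itself, the whole task reduces to two points: equip $\cB$ with a complete metric, and show that $\Psi$ contracts it with ratio $\lambda<1$. For the metric I would use a weighted supremum, $\mathbf{d}(\alpha,\beta)\bydef\sup|\alpha(\xx_c,\xx_s)-\beta(\xx_c,\xx_s)|/\|\xx_s\|_{\ell^1}$ (well defined because $\alpha-\beta$ vanishes on $X_c$ thanks to the normalization $\alpha(\xx_c,0)=\xx_c$), up to a harmless rescaling; $\cB$ is complete under $\mathbf{d}$ since it is a closed subset of a Banach space of Lipschitz maps (the constraints $\alpha(\xx_c,0)=\xx_c$ and $\mathrm{Lip}_{\xx_s}\alpha\le\rho$ survive uniform limits).

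The core is the contraction bound. Fix $\alpha,\beta\in\cB$, $\phi\in B_c(r_c)$, $\xi\in B_s(r_s)$ and write $x^\alpha(t)\bydef x(t,\phi,\xi,\alpha)$, $x^\beta(t)\bydef x(t,\phi,\xi,\beta)$ for the solutions of \eqref{eq:ProjectedSystem}. I would first rerun the a priori Grönwall estimate behind Proposition~\ref{prop:Endomorphism}: using $\|e^{\fL t}\|_{B(\ell^1)}\le e^{-\mu t}$, $|\alpha(\Phi(t,\phi),\xx_s)|\le r_c+\rho\|\xx_s\|_{\ell^1}$ and $\|\cN_s(\xx_c,\xx_s)\|_{\ell^1}\le 2|\xx_c|\|\xx_s\|_{\ell^1}+\|\xx_s\|_{\ell^1}^2$, one gets $\|\cN_s(\alpha(\Phi,\xx_s),\xx_s)\|_{\ell^1}\le\delta_1\|\xx_s\|_{\ell^1}$ on $B_s(r_s)$, hence $\|x^\alpha(t)\|_{\ell^1}\le\|\xi\|_{\ell^1}e^{-(\mu-\delta_1)t}\le r_s$ (and likewise for $x^\beta$), where $\delta_1<\mu$ enters.

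Next I would subtract the two copies of \eqref{eq:ProjectedSystem}, expand $\cN_s$ via its bilinear form, and split $\cN_s(\alpha(\Phi,x^\alpha),x^\alpha)-\cN_s(\beta(\Phi,x^\beta),x^\beta)$ into an $(\alpha-\beta)$-part, controlled by $2\|x^\alpha(t)\|_{\ell^1}\,\mathbf{d}(\alpha,\beta)$ together with the decay just obtained, and an $(x^\alpha-x^\beta)$-part, controlled by $\delta_4\|x^\alpha(t)-x^\beta(t)\|_{\ell^1}$ (the Lipschitz constant of $\xx_s\mapsto\cN_s(\beta(\Phi(t,\phi),\xx_s),\xx_s)$ on $B_s(r_s)$, which is where $\delta_4<\mu$ is used). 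A second Grönwall argument, integrated in $t$ with Fubini, then gives $\int_0^\infty\|x^\alpha(t)-x^\beta(t)\|_{\ell^1}\,dt\le\frac{2r_s}{(\mu-\delta_1)(\mu-\delta_4)}\mathbf{d}(\alpha,\beta)$. Finally I would plug everything into $\Psi[\alpha](\phi,\xi)-\Psi[\beta](\phi,\xi)=-\int_0^\infty\big(\cN_c(\alpha(\Phi,x^\alpha),x^\alpha)-\cN_c(\beta(\Phi,x^\beta),x^\beta)\big)\,dt$, again splitting the integrand into an $(\alpha-\beta)$-contribution and an $(x^\alpha-x^\beta)$-contribution; using the bilinearity of $\cN_c$, the bounds $|\cN_c(\xx_c,\xx_s)|\le|\xx_c|^2+\|\xx_s\|_{\ell^1}^2$ and $|\alpha(\Phi(t,\phi),\xx_s)|\le r_c+\rho r_s$, and the normalization $\alpha(\xx_c,0)=\beta(\xx_c,0)=\xx_c$ (which forces the $(\alpha-\beta)$-contribution to carry an extra decaying factor $\|x^\alpha(t)\|_{\ell^1}$), the first contribution integrates to $\frac{2(r_c+\rho r_s)}{\mu-\delta_1}$ and the second to $\frac{\delta_3\cdot 2r_s}{(\mu-\delta_1)(\mu-\delta_4)}$, i.e. $\mathbf{d}(\Psi[\alpha],\Psi[\beta])\le\lambda\,\mathbf{d}(\alpha,\beta)$ with $\lambda$ as in the statement. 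Since $\lambda<1$ by hypothesis, Banach's theorem yields the unique fixed point $\alpha=\Psi[\alpha]$ in $\cB$.

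The main obstacle, as usual for Lyapunov--Perron arguments, is not the fixed point machinery but controlling the improper integrals. Along any trajectory the center coordinate stays near $\Phi(t,\phi)=\phi/(1-\phi t e^{i\theta})$, which decays only like $1/t$ — and genuinely fails to be integrable when $\mathrm{Re}(\phi e^{i\theta})=0$ — so every occurrence of it must be paired against the exponentially small stable factor $\|x^\alpha(t)\|_{\ell^1}\le\|\xi\|_{\ell^1}e^{-(\mu-\delta_1)t}$; this is exactly why the normalization $\alpha(\xx_c,0)=\xx_c$ (and hence the weighted metric on $\cB$) is indispensable. One must also keep the two decay rates $\mu-\delta_1$ and $\mu-\delta_4$ straight through the nested Grönwall estimates so that the constant comes out precisely equal to $\lambda$; obtaining $\lambda<1$, rather than mere finiteness, is what pins down the quantitative smallness requirements on $r_c,r_s,\rho$ built into the hypotheses.
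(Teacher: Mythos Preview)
Your proposal is correct and follows essentially the same route as the paper: the weighted metric you introduce is exactly the paper's norm $\|\cdot\|_{\cE}$ (Definition~\ref{def:WeakerNorm}), your a~priori decay $\|x^\alpha(t)\|_{\ell^1}\le\|\xi\|_{\ell^1}e^{-(\mu-\delta_1)t}$ is Proposition~\ref{prop:NormBound}, your second Gr\"onwall step for $x^\alpha-x^\beta$ is Proposition~\ref{prop:DifferentMapTracking}, and your final splitting of $\cN_c$ reproduces the paper's contraction estimate verbatim with the same constant $\lambda$. The only cosmetic slip is that your intermediate bound $\int_0^\infty\|x^\alpha-x^\beta\|_{\ell^1}\,dt\le \tfrac{2r_s}{(\mu-\delta_1)(\mu-\delta_4)}\mathbf{d}(\alpha,\beta)$ drops a factor of $\|\xi\|_{\ell^1}$, which is precisely what gets divided out when you pass to $\mathbf{d}(\Psi[\alpha],\Psi[\beta])$; the final inequality is unaffected.
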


The proof is given in Section \ref{sec:GlobalExistenceProof}.  

\begin{cor} \label{prop:TrackingEstimate}
	Suppose the hypothesis of Theorem \ref{prop:MainGlobalTheorem} is satisfied and $ \hat{\alpha} \in \cB$ is such that $ \Psi(\hat{\alpha}) = \hat{\alpha}$.  
	Fix  $(\phi,\xi) \in B_c(r_c) \times B_s(r_s)$. 
	Let $ X(t)$ denote the solution to \eqref{eq:CGL_ode} with initial conditions $( \hat{\alpha}(\phi,\xi) ,\xi)$.  
	Then $\lim_{t\to \infty} X(t) =0$ and moreover
	\[
	\| X(t) - (\Phi(t,\phi),0) \| \leq  e^{-(\mu-\delta_1)t}  (1 + \rho ) | \xi|.
	\]
	
\end{cor}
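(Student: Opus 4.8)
The plan is to first unwind what the solution $X(t)$ looks like in the $(\xx_c,\xx_s)$ splitting, and then reduce everything to a single decay estimate for the stable component. Since $\hat\alpha = \Psi[\hat\alpha]$, the discussion preceding Theorem~\ref{prop:MainGlobalTheorem} shows that $t \mapsto \big(\hat\alpha(\Phi(t,\phi),x(t,\phi,\xi,\hat\alpha)),\,x(t,\phi,\xi,\hat\alpha)\big)$ solves \eqref{eq:CGL_ode}, and at $t=0$ it equals $(\hat\alpha(\phi,\xi),\xi)$ because $\Phi(0,\phi)=\phi$ and $x(0,\phi,\xi,\hat\alpha)=\xi$; by uniqueness of solutions of the Cauchy problem this trajectory must be $X(t)$. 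Writing $x(t)\bydef x(t,\phi,\xi,\hat\alpha)$ for brevity, so that $X(t)=(\hat\alpha(\Phi(t,\phi),x(t)),x(t))$, and using $\hat\alpha(\xx_c,0)=\xx_c$ together with the Lipschitz bound $|\hat\alpha(\xx_c,\xx_1)-\hat\alpha(\xx_c,\xx_2)|\le\rho|\xx_1-\xx_2|$ from the definition of $\cB$, one gets
\[
\|X(t)-(\Phi(t,\phi),0)\| = |\hat\alpha(\Phi(t,\phi),x(t))-\Phi(t,\phi)| + \|x(t)\|_{\ell^1} \le (1+\rho)\|x(t)\|_{\ell^1},
\]
using that the $\ell^1$ norm on $\ell^1 = X_c\oplus X_s$ splits as $|\xx_c|+\|\xx_s\|_{\ell^1}$. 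So the statement reduces to proving the decay estimate $\|x(t)\|_{\ell^1}\le e^{-(\mu-\delta_1)t}|\xi|$.

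To obtain that, I would apply the variation of constants formula to \eqref{eq:ProjectedSystem}, namely
\[
x(t) = e^{\fL t}\xi + \int_0^t e^{\fL(t-\tau)}\cN_s\big(\hat\alpha(\Phi(\tau,\phi),x(\tau)),x(\tau)\big)\,d\tau,
\]
and then estimate. Using $\|e^{\fL t}\|_{B(\ell^1)}\le e^{-\mu t}$, the bound $\|\cN_s(\xx_c,\xx_s)\|_{\ell^1}\le 2|\xx_c|\,\|\xx_s\|_{\ell^1}+\|\xx_s\|_{\ell^1}^2$, and — as long as $x(\tau)\in B_s(r_s)$, so that $\hat\alpha$ is defined there, $\|x(\tau)\|_{\ell^1}\le r_s$, and $|\Phi(\tau,\phi)|\le r_c$, which together give $|\hat\alpha(\Phi(\tau,\phi),x(\tau))|\le r_c+\rho r_s$ — one finds $\|\cN_s(\hat\alpha(\Phi(\tau,\phi),x(\tau)),x(\tau))\|_{\ell^1}\le (2r_c+2\rho r_s+r_s)\|x(\tau)\|_{\ell^1}=\delta_1\|x(\tau)\|_{\ell^1}$. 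Hence $y(t)\bydef e^{\mu t}\|x(t)\|_{\ell^1}$ satisfies $y(t)\le|\xi|+\delta_1\int_0^t y(\tau)\,d\tau$, and Grönwall's inequality yields $y(t)\le|\xi|e^{\delta_1 t}$, i.e.\ $\|x(t)\|_{\ell^1}\le e^{-(\mu-\delta_1)t}|\xi|$. Since $\mu-\delta_1>0$ and $|\xi|\le r_s$, this in turn shows $\|x(t)\|_{\ell^1}<r_s$ for $t>0$, so a standard continuation argument confirms that $x(t)$ never leaves $B_s(r_s)$ and exists globally, and the estimate holds for all $t\ge 0$. Finally $\|x(t)\|_{\ell^1}\to 0$, and since $\phi\in B_c(r_c)$ forces $\Phi(t,\phi)=\phi/(1-\phi t e^{\im\theta})\to 0$ (no blow-up occurs because $\mathrm{Re}(e^{\im\theta}\phi)\le 0$), we conclude $X(t)\to 0$.

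The only genuinely delicate point is the interplay between "$x(t)$ stays in $B_s(r_s)$" — needed so that $\hat\alpha$ and the $\cN_s$ estimate apply — and the decay estimate that itself guarantees this; the bootstrap resolving it relies precisely on the hypothesis $\delta_1<\mu$. Everything else (the variation of constants formula, the Grönwall step, the product-norm decomposition) is routine, and much of the a priori control of $x(t,\phi,\xi,\hat\alpha)$ inside $B_s(r_s)$ should already be available from the well-definedness part of the proof of Theorem~\ref{prop:MainGlobalTheorem}, so here it can largely be cited rather than re-derived.
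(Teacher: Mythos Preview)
Your argument is correct and matches the paper's approach: the paper's proof of this corollary is the single line ``The proof largely follows from Proposition~\ref{prop:NormBound},'' and you have unpacked precisely what that entails---identifying $X(t)$ with $\big(\hat\alpha(\Phi(t,\phi),x(t)),x(t)\big)$, bounding the center component via $\hat\alpha(\xx_c,0)=\xx_c$ and the $\rho$-Lipschitz condition, and controlling the stable component by the decay estimate of Proposition~\ref{prop:NormBound}. Your re-derivation of that decay estimate is exactly the proof of Proposition~\ref{prop:NormBound} itself, as you correctly note at the end; citing it would suffice.
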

The proof largely follows from Proposition \ref{prop:NormBound}.

\begin{cor}
	\label{prop:AlphaImage}
	Fix  $\rho, r_c,r_s >0$ and write $ B_c = B_c(r_c)$ and $B_s= B_s(r_s)$. Define the set $U \subseteq B_c  \times B_s  $ as follows:
	\[
	U \bydef \{ (\xx_c,\xx_s) \in  B_c \times B_s  : \rho \|\xx_s\|_{\ell^1}  < dist(\xx_c, \partial B_c) \},
	\]
	where $\partial B_c$ denotes the boundary of $ B_c$.  
	Suppose the hypothesis of Theorem \ref{prop:MainGlobalTheorem} is satisfied and there exists some $\alpha \in \cB$ such that $ \Psi[\alpha ] = \alpha$. Then  $U$ is contained in the $\alpha$-skew image of $B_c \times B_s$; that is to say   $U \subseteq  \{  ( \alpha(\xx_c,\xx_s),\xx_s) : (\xx_c,\xx_s) \in B_c \times B_s\}$. 
	Furthermore, $\lim_{t \to \infty} a(t) =0$ under the differential equation  \eqref{eq:CGL_ode} for all $ a(0) \in U$. 
\end{cor}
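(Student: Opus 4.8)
The plan is to prove the two assertions separately. The first one---that $U$ lies in the $\alpha$-skew image of $B_c\times B_s$---reduces to showing that for every $(\xx_c,\xx_s)\in U$ the equation $\alpha(\phi,\xx_s)=\xx_c$ has a solution $\phi\in B_c$; the second assertion will then follow immediately by applying Corollary~\ref{prop:TrackingEstimate} with $(\phi,\xi)=(\phi,\xx_s)$.

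For the first assertion I would fix $(\xx_c,\xx_s)\in U$, so that $\xx_s\in B_s(r_s)$ and $\rho\|\xx_s\|_{\ell^1}<\mathrm{dist}(\xx_c,\partial B_c)$; in particular $\xx_c$ is an interior point of $B_c$. The first step is the elementary geometric observation that, since $B_c=B_c(r_c)$ is a closed \emph{convex} subset of $X_c\cong\C$ (it is the intersection of the disk $|\xx_c|\le r_c$ with the half-plane $\mathrm{Re}(e^{i\theta}\xx_c)\le 0$), the closed ball $D\bydef\{w\in X_c:|w-\xx_c|\le\rho\|\xx_s\|_{\ell^1}\}$ is contained in $B_c$: a segment from the interior point $\xx_c$ to any point outside $B_c$ must cross $\partial B_c$ at distance at most $\rho\|\xx_s\|_{\ell^1}$ from $\xx_c$, contradicting the defining inequality of $U$. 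The second step is to consider the continuous near-identity map $G:B_c\to X_c$, $G(w)\bydef w-\alpha(w,\xx_s)+\xx_c$. Using $\alpha(w,0)=w$ together with the bound $|\alpha(w,\xx_1)-\alpha(w,\xx_2)|\le\rho\|\xx_1-\xx_2\|_{\ell^1}$ from the definition of $\cB$, one gets $|G(w)-\xx_c|=|\alpha(w,\xx_s)-\alpha(w,0)|\le\rho\|\xx_s\|_{\ell^1}$ for all $w\in B_c$, hence $G(B_c)\subseteq D\subseteq B_c$. As $B_c$ is a nonempty compact convex subset of the finite-dimensional space $X_c\cong\R^2$, Brouwer's fixed point theorem yields $\phi\in B_c$ with $G(\phi)=\phi$, that is, $\alpha(\phi,\xx_s)=\xx_c$. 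Consequently $(\xx_c,\xx_s)=(\alpha(\phi,\xx_s),\xx_s)$ with $(\phi,\xx_s)\in B_c\times B_s$, which is the asserted inclusion $U\subseteq\{(\alpha(\xx_c,\xx_s),\xx_s):(\xx_c,\xx_s)\in B_c\times B_s\}$.

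For the second assertion I would take any $a(0)=(\xx_c,\xx_s)\in U$, let $\phi\in B_c(r_c)$ be the point constructed above (so that $a(0)=(\alpha(\phi,\xx_s),\xx_s)$ with $\xx_s\in B_s(r_s)$), and then invoke Corollary~\ref{prop:TrackingEstimate} with the fixed point $\hat\alpha=\alpha$ of $\Psi$ and $(\phi,\xi)=(\phi,\xx_s)$. That corollary states that the solution of \eqref{eq:CGL_ode} with initial condition $(\alpha(\phi,\xi),\xi)=a(0)$ converges to $0$ as $t\to\infty$ (with an explicit exponential rate), which is exactly $\lim_{t\to\infty}a(t)=0$.

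The only nonroutine step is the first assertion, and the crucial point there is to solve $\alpha(\phi,\xx_s)=\xx_c$ not by a contraction argument---the definition of $\cB$ controls the Lipschitz constant of $\alpha$ only in its second argument, not in its first---but by a topological fixed point argument for the near-identity map $G$, whose self-mapping property on $B_c$ rests precisely on the convexity of $B_c$ and the boundary-distance condition encoded in $U$ (which is what keeps the small ball $D$ inside $B_c$); one also uses the finite-dimensionality of $X_c$ so that Brouwer's theorem is available. The remaining verifications are straightforward bookkeeping.
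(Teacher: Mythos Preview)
Your proof is correct, and it takes a genuinely different---and more economical---route than the paper's own argument.

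The paper proceeds by a homotopy continuation: it first uses Corollary~\ref{prop:TrackingEstimate} to argue that $\alpha(\cdot,\xi):B_c\to X_c$ is injective for each fixed $\xi$, invokes invariance of domain to make it a homeomorphism onto its image, and then builds a homotopy $H(\phi,\tau)=(\alpha(\phi,\tau\xi),\tau)$ to continue the solution of $\alpha(\phi,\tau\xi)=\phi_0$ from $\tau=0$ (where $\phi=\phi_0$ trivially works) up to $\tau=1$, ruling out escape through $\partial B_c$ via the distance condition defining $U$. Your argument bypasses all of this: you observe that $|\alpha(w,\xx_s)-w|=|\alpha(w,\xx_s)-\alpha(w,0)|\le\rho\|\xx_s\|_{\ell^1}$ directly from the definition of $\cB$, so the near-identity map $G(w)=w-\alpha(w,\xx_s)+\xx_c$ sends $B_c$ into the small disk $D$ about $\xx_c$, which the condition $\rho\|\xx_s\|_{\ell^1}<\mathrm{dist}(\xx_c,\partial B_c)$ forces inside $B_c$; Brouwer then gives the fixed point. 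Your approach needs neither injectivity of $\alpha(\cdot,\xi)$ nor invariance of domain, only the convexity and compactness of $B_c\subset\C$, and it makes transparent exactly how the Lipschitz bound in the \emph{second} slot of $\alpha$ is what drives the argument (as you correctly emphasize, no control in the first slot is available from $\cB$). The paper's approach, on the other hand, yields as a byproduct that the preimage varies continuously in $\xi$ along the homotopy, which is not needed here but could be useful in other contexts.
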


\begin{proof}
	Fix $( \phi_0 , \xi) \in U \subseteq B_c \times B_s$. 
	We wish to show there exists some $ \phi \in B_c$ such that $ \alpha ( \phi, \xi ) = \phi_0$. Let us define 
	\[
	\tau_0 = \sup \{  \tau \in [0,1] :  \emptyset \neq \alpha^{-1} ( \phi_0 , \tau \xi ) \in \mbox{int}(B_c)  \} .
	\] 
	To show  $U$ is contained in the $\alpha$-skew image of $B_c \times B_s$, it suffices to show that $ \tau_0=1$. 
	Since $ \alpha(\phi,0) = \phi$ for all $ \phi \in B_c$, then $ \tau_0 >0$.
	
	From Corollary \ref{prop:TrackingEstimate} it follows that for each fixed  $ \xi \in B_s $ the map $ \alpha ( \cdot , \xi) : B_c \to B_c$ is injective. By the invariance of domain theorem, it follows that $\alpha ( \cdot , \xi)$ is a homeomorphism from $ \mbox{int} ( B_c )$ onto its image. 
We may then define a homotopy $H: \mbox{int}(B_c) \times [0,1] \to \C \times [0,1]$ by $ H(\phi,\tau) = ( \alpha(\phi,\tau \xi),\tau)$, which is also a homeomorphism from $ \mbox{int} (B_c) \times [0,1]$ onto its image. 	
Hence $ H^{-1}\big(\phi_0,(0,\tau_0)\big)= \{ \big ( \alpha^{-1} ( \phi_0 , \tau \xi ) , \tau \big) \in \mbox{int}(B_c) \times [0,1]: \tau \in ( 0, \tau_0) \}$ is a continuous curve, and we may define a function $ \Gamma : (0,\tau_0) \to \mbox{int}(B_c)$ by  $ \Gamma(\tau) \bydef H^{-1}(\phi_0,\tau \xi )$. 
Fix $ \phi_1 \bydef \lim_{\tau \to \tau_0} \Gamma(\tau)    \in B_c$. 
	

		By way of contradiction, let us suppose that $ \tau_0 < 1$. 
	If $\phi_1 \in \mbox{int}( B_c)$, then $H$ maps an open set about $ (\phi_1, \tau_0)$ onto an open set about $(\phi_0,\tau_0)$. 
	Hence the initial choice of $ \tau_0$ as the supremum above was incorrect, and $\tau_0$ could have been made even larger.   	 
	Otherwise suppose  $ \phi_1 \in \partial B_c$. 
	But this is contradicted by our definition of $U$, as seen by the following calculation: 
	\begin{align*}
	0= | \phi_0 - \alpha(\phi_1, \tau_0 \xi) |\geq 
	\inf_{\phi \in \partial B_c} \left| \phi_0 - \alpha(\phi,\tau_0  \xi)  \right| 
	\geq  \mbox {dist} ( \phi_0 , \partial B_c ) - \rho \tau_0 \|\xi\|_{\ell^1} 
	>0.
	\end{align*}
	Hence $ \tau_0 =1$, and there is some $ \phi \in B_c$ such that $ \alpha(\phi,\xi) = \phi_0$. 
	Thereby $U$ is contained in the $\alpha$-skew image of $B_c \times B_s$.
	
	Furthermore, if $a(0) \in U$, then $a(0)$ is in the $\alpha$-skew image of $B_c \times B_s$. 
	That is to say $a(0)$ is in the center stable manifold of the equilibrium $0$; $\lim_{t \to \infty} a(t) =0$.

\end{proof}

\begin{rem}
	\label{rem:ChooseRho}
	By isolating the $ \rho$ terms in the inequality $\frac{\delta_3}{\mu - \delta_2} $, we obtain the following: 
	\begin{align*}
	\rho^2  + \rho \left( \frac{- \mu +  4 r_c + 2 r_s}{4 r_s} \right)   +2 r_s  &<0 .
	\end{align*}
	Hence, we need at a minimum $ \mu > 4 r_c +2 r_s$ in order to satisfy  the hypotheses of Theorem \ref{prop:MainGlobalTheorem}. 
	Using the quadratic formula, we obtain a necessary inequality,
	\begin{align*}
	0 < \frac{ \mu -  4 r_c - 2 r_s }{8 r_s} - \frac{1}{2}   \sqrt{\left(\frac{\mu -  4 r_c - 2 r_s  }{4 r_s} \right)^2- 8 r_s }  < \rho.
	\end{align*}
	We can use this to define a nearly optimal $\rho$ in terms of $ r_c$, $ r_s$ and $ \mu$. 
\end{rem}

\begin{rem}
	For a given $ \theta$, there is something to be said for how to select $r_c$ and $r_s$ so that the hypothesis of Theorem \ref{prop:MainGlobalTheorem} is satisfied. 
	For the best bounds, $\rho$ should be taken as small as possible, and nearly optimal bounds are explicitly given in Remark \ref{rem:ChooseRho}. Furthermore, we necessarily need to take $  r_c < \tfrac{1}{4} \omega^2\cos \theta$. 
	Below are some constants which will satisfy Theorem \ref{prop:MainGlobalTheorem} and try to maximize $r_c$. 
	\[
	\begin{array}{c|ccccc}
	\theta 	& \tfrac{1}{4} \omega^2 \cos \theta &r_c 	& r_s 	&\rho& \lambda  \\
	\hline 
	0 		&9.87	& 9.77 & 0.01 	& 0.06 & 0.99\\
	\pi/8 	&9.12	& 9.02& 0.01 	& 0.06 & 0.98\\
	\pi / 4	&6.98	& 6.88 & 0.01 	& 0.06 & 0.98\\
	3 \pi /8&3.78	& 3.68 & 0.01 	& 0.06 & 0.96
	\end{array}
	\]
\end{rem}
%

\subsection{Proof of Theorem \ref{prop:MainGlobalTheorem}}
\label{sec:GlobalExistenceProof}
The proof of Theorem \ref{prop:MainGlobalTheorem} is organized as follows. 
First in Propositions \ref{prop:NormBound}--\ref{prop:ExponentialTrackingCenter} we derive bounds on the norm of solutions to \eqref{eq:ProjectedSystem} and their dependence on initial conditions. 
In Proposition \ref{prop:Endomorphism}, we show that $\Psi : \cB\to \cB$ is a well defined endomorphism. 
In Definition \ref{def:WeakerNorm}, we define a norm $ \| \cdot \| _{\cE}$ on $\cB$. 
In Proposition \ref{prop:DifferentMapTracking}, we show that if two maps $ \alpha , \beta \in \cB$ are close in the $\| \cdot \|_\cE$ norm, then their solutions to \eqref{eq:ProjectedSystem}, taken with the same initial conditions but different maps $\alpha$ and $\beta$, will also remain close. 
Finally, we finish the proof of Theorem \ref{prop:MainGlobalTheorem} by bounding the Lipschitz constant of $\Psi$ in this new norm. If this Lipschitz constant is less than $1$, then $ \Psi$ is a contraction mapping, and hence it will have a unique fixed point. 
Throughout, we will often make use of the following Gr\"onwall type lemma below~\cite{BergPREPApproximating}. 
\begin{lem}
	\label{lem:GronwallType}
	Fix constants $c_0,c_1,c_2 ,c_3 \in \R$ with $c_1,c_2 ,c_3\geq 0$. 
	Define $ \mu_0 = c_0 + c_2$ and fix $ \mu_1 \in \R$ such that $\mu_0 \neq \mu_1$.   If we have the inequality: 
	\begin{equation*}
	e^{-c_0 t} u_0 (t) \leq \left( c_1 + \int_0^t e^{-c_0 s} e^{\mu_1 s} c_3  \, ds \right) + \int_0^t c_2 e^{-c_0 s} u_0(s) ds,
	\end{equation*}
	then:  
	\begin{equation}
	u_0(t) \leq c_1 e^{\mu_0 t }  +  c_3 \frac{e^{\mu_1 t} - e^{\mu_0 t}}{\mu_1 -\mu_0}  .
	\end{equation}
\end{lem}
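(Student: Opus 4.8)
The plan is to reduce the stated integral inequality, via the substitution $v(t)\bydef e^{-c_0t}u_0(t)$, to a linear Gr\"onwall inequality with constant kernel, and then to identify the resulting explicit majorant with the claimed right-hand side by solving an elementary linear ODE. Setting
\[
G(t)\bydef \int_0^t e^{-c_0 s}e^{\mu_1 s}c_3\,ds=c_3\int_0^t e^{(\mu_1-c_0)s}\,ds,
\]
the hypothesis reads $v(t)\le G(t)+c_2\int_0^t v(s)\,ds$ for $t\ge 0$, where $G$ is a nonnegative, nondecreasing $C^1$ function (because $c_1,c_3\ge 0$), with $G(t)=c_1+\tfrac{c_3}{\mu_1-c_0}\big(e^{(\mu_1-c_0)t}-1\big)$ when $\mu_1\ne c_0$ and $G(t)=c_1+c_3t$ when $\mu_1=c_0$. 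Note that no sign assumption on $c_0$ is needed, since $e^{-c_0s}>0$ always; only $c_2\ge 0$ matters.

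Next I would set $w(t)\bydef\int_0^t v(s)\,ds$, so that $w(0)=0$ and $w'(t)=v(t)\le G(t)+c_2 w(t)$, i.e. $\tfrac{d}{dt}\big(e^{-c_2 t}w(t)\big)\le e^{-c_2 t}G(t)$. Integrating over $[0,t]$ and multiplying by $e^{c_2 t}$ gives $w(t)\le\int_0^t e^{c_2(t-s)}G(s)\,ds$, and hence
\[
v(t)\le G(t)+c_2\int_0^t e^{c_2(t-s)}G(s)\,ds.
\]
This is just the variation-of-constants form of Gr\"onwall's inequality; it is important to use this form rather than the cruder monotone-majorant bound $v(t)\le e^{c_2t}G(t)$, which multiplies out to a strictly weaker estimate when $c_2\ne 0$ and does not collapse to the asserted expression.

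It remains to evaluate the right-hand side. I would observe that the function $\psi$ defined by $\psi(t)\bydef G(t)+c_2\int_0^t e^{c_2(t-s)}G(s)\,ds$ is exactly the solution of the linear initial value problem $\psi'(t)=c_2\psi(t)+G'(t)=c_2\psi(t)+c_3e^{(\mu_1-c_0)t}$, $\psi(0)=c_1$ (integrate by parts, using $G(0)=c_1$). Solving this ODE explicitly,
\[
\psi(t)=c_1e^{c_2t}+c_3e^{c_2t}\int_0^t e^{(\mu_1-c_0-c_2)s}\,ds
=c_1e^{c_2t}+\frac{c_3}{\mu_1-\mu_0}e^{c_2t}\big(e^{(\mu_1-\mu_0)t}-1\big),
\]
where I used the identities $\mu_1-c_0-c_2=\mu_1-\mu_0$ and $c_0+c_2=\mu_0$, and where the hypothesis $\mu_0\ne\mu_1$ legitimizes the division. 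Finally, multiplying $v(t)\le\psi(t)$ by $e^{c_0t}>0$ yields
\[
u_0(t)=e^{c_0t}v(t)\le e^{c_0t}\psi(t)=c_1e^{\mu_0t}+\frac{c_3}{\mu_1-\mu_0}\big(e^{\mu_1t}-e^{\mu_0t}\big),
\]
which is the asserted bound.

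There is no real obstacle; the argument is a textbook Gr\"onwall estimate. The only points that need a little care are the exponent bookkeeping — the collapse to the stated exponentials hinges on the identities $c_0+c_2=\mu_0$ and $\mu_1-c_0-c_2=\mu_1-\mu_0$ — and the insistence on the variation-of-constants form of Gr\"onwall's inequality rather than a monotone majorant, since only the former produces the sharp constant $\tfrac{c_3}{\mu_1-\mu_0}$. The degenerate case $\mu_1=c_0$ (affine $G$) is covered by the same computation, and the standing hypothesis $\mu_0\ne\mu_1$ ensures the final expression is well defined.
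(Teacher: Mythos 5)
Your proof is correct. Note that the paper itself states this lemma without proof, citing the reference \cite{BergPREPApproximating}, so there is no in-paper argument to compare against; your self-contained derivation is exactly the standard one that such a statement rests on: substitute $v(t)=e^{-c_0t}u_0(t)$, apply Gr\"onwall in its variation-of-constants form via $w(t)=\int_0^t v(s)\,ds$ (using $c_2\ge 0$ only in the final multiplication by $c_2$), identify the majorant with the solution of $\psi'=c_2\psi+c_3e^{(\mu_1-c_0)t}$, $\psi(0)=c_1$, and use $c_0+c_2=\mu_0$, $\mu_1-c_0-c_2=\mu_1-\mu_0\neq 0$ to collapse the exponentials. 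The exponent bookkeeping and the treatment of the case $\mu_1=c_0$ (where $c_2\neq 0$ is forced by $\mu_0\neq\mu_1$, so the final division is still legitimate) are handled correctly. One cosmetic slip: your displayed definition of $G$ omits the additive constant $c_1$, while every subsequent use ($G(0)=c_1$ and the explicit formula for $G$) includes it; the definition should read $G(t)=c_1+\int_0^t e^{-c_0 s}e^{\mu_1 s}c_3\,ds$. You also implicitly assume enough regularity of $u_0$ (e.g.\ continuity) for $w$ to be differentiable, which is harmless here since the lemma is applied to norms of continuous solutions.
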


\begin{prop}
	\label{prop:NormBound}
	Fix $ \alpha \in \cB, \phi \in B_c(r_c), \xi \in B_s(r_s)$, and  define 
	\begin{align*}
	\delta_1 = 2 r_c+ (1+ 2\rho ) r_s.
	\end{align*}
	If $ \delta_1 <\mu $ then: 
	\begin{align*}
	\|x(t,\phi,\xi,\alpha) \|_{\ell^1} \leq e^{-(\mu-\delta_1) t} \| \xi\|_{\ell^1} .
	\end{align*}
	Furthermore, $x(t,\phi,\xi,\alpha) \in B_s(r_s)$ for all $ t \geq 0$. 
	
\end{prop}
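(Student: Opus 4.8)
The plan is to work with the variation-of-constants (Duhamel) representation of the mild solution of \eqref{eq:ProjectedSystem}, to use the semigroup contraction estimate $\|e^{\fL t}\|_{B(\ell^1)}\le e^{-\mu t}$, and to close the argument with the Gr\"onwall-type Lemma~\ref{lem:GronwallType} together with a continuity (bootstrap) argument. Since $\fL$ generates a $C_0$-semigroup on $X_s$ and the map $\xx_s\mapsto\cN_s(\alpha(\Phi(t,\phi),\xx_s),\xx_s)$ is locally Lipschitz on $B_s(r_s)$ (being the composition of the $\rho$-Lipschitz map $\alpha$ with the bounded bilinear map $\cN_s$), there is a unique mild solution $x(t)=x(t,\phi,\xi,\alpha)$, defined on a maximal interval $[0,T)$ along which it stays in $B_s(r_s)$, and it satisfies
\[
x(t)=e^{\fL t}\xi+\int_0^t e^{\fL(t-s)}\,\cN_s\big(\alpha(\Phi(s,\phi),x(s)),\,x(s)\big)\,ds .
\]
Taking $\|\cdot\|_{\ell^1}$ and using the semigroup bound yields
\[
\|x(t)\|_{\ell^1}\le e^{-\mu t}\|\xi\|_{\ell^1}+\int_0^t e^{-\mu(t-s)}\big\|\cN_s\big(\alpha(\Phi(s,\phi),x(s)),\,x(s)\big)\big\|_{\ell^1}\,ds .
\]

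Next I would estimate the nonlinear term. Since $\alpha\in\cB$ satisfies $\alpha(\xx_c,0)=\xx_c$ and is $\rho$-Lipschitz in its second argument, and since $\Phi(s,\phi)\in B_c(r_c)$ for all $s\ge0$ (so $|\Phi(s,\phi)|\le r_c$), we get $|\alpha(\Phi(s,\phi),x(s))|\le|\Phi(s,\phi)|+\rho\|x(s)\|_{\ell^1}\le r_c+\rho\|x(s)\|_{\ell^1}$. Substituting this into the quadratic bound $\|\cN_s(\xx_c,\xx_s)\|_{\ell^1}\le 2|\xx_c|\,\|\xx_s\|_{\ell^1}+\|\xx_s\|_{\ell^1}^2$ and using the a priori bound $\|x(s)\|_{\ell^1}\le r_s$, valid on $[0,T)$ by definition of $T$, gives
\[
\big\|\cN_s\big(\alpha(\Phi(s,\phi),x(s)),\,x(s)\big)\big\|_{\ell^1}\le\big(2r_c+(1+2\rho)r_s\big)\|x(s)\|_{\ell^1}=\delta_1\|x(s)\|_{\ell^1}.
\]

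Combining the two displays and multiplying by $e^{\mu t}$, I obtain $e^{\mu t}\|x(t)\|_{\ell^1}\le\|\xi\|_{\ell^1}+\delta_1\int_0^t e^{\mu s}\|x(s)\|_{\ell^1}\,ds$ on $[0,T)$, so Lemma~\ref{lem:GronwallType} with $c_0=-\mu$, $c_1=\|\xi\|_{\ell^1}$, $c_2=\delta_1$, $c_3=0$ (the choice of $\mu_1\ne\mu_0=-(\mu-\delta_1)$ being immaterial since $c_3=0$) gives $\|x(t)\|_{\ell^1}\le e^{-(\mu-\delta_1)t}\|\xi\|_{\ell^1}$ for $t\in[0,T)$. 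Because $\delta_1<\mu$, the right-hand side is $\le\|\xi\|_{\ell^1}\le r_s$, and strictly less than $r_s$ for every $t>0$; hence $x(\cdot)$ cannot approach $\partial B_s(r_s)$ as $t\to T$, and, being bounded with bounded right-hand side, it cannot blow up either. Therefore $T=\infty$, which simultaneously closes the a priori bound used in the previous step and establishes $x(t,\phi,\xi,\alpha)\in B_s(r_s)$ for all $t\ge0$, completing both assertions.

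The main obstacle is the bootstrap step: the nonlinear estimate becomes linear in $\|x(s)\|_{\ell^1}$ only once the trajectory is known to remain in $B_s(r_s)$, so the continuity/maximality argument must be set up with care, in particular in the borderline case $\|\xi\|_{\ell^1}=r_s$, where one relies on the strict decay for $t>0$. Everything else is routine: the Duhamel formula, the semigroup bound $\|e^{\fL t}\|_{B(\ell^1)}\le e^{-\mu t}$, and the structural properties of $\cB$.
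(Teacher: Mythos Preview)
Your proof is correct and follows essentially the same approach as the paper's: Duhamel formula, the same linearization of the nonlinear estimate via $|\alpha(\Phi,\xx_s)|\le r_c+\rho r_s$ and $\|\xx_s\|_{\ell^1}\le r_s$, and Gr\"onwall's inequality. You are in fact more careful than the paper, which silently uses $\|x_s\|_{\ell^1}\le r_s$ in the nonlinear bound and leaves the bootstrap/continuation argument implicit, whereas you spell it out.
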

\begin{proof}
	Let us abbreviate $x_s = x_s(t) = x(t,\phi,\xi,\alpha)
	$. 
	By variation of constants we have 
	\begin{align*}
	\|x_s(t )\|_{\ell^1}  &\leq e^{-\mu t} |\xi| + \int_0^t e^{-\mu (t-\tau)} \|\cN_s\left( \alpha ( \Phi(\tau,\phi) ,x_s(\tau)) , x_s(\tau) \right) \|_{\ell^1} d \tau .
	\end{align*}
	Since $|  \Phi(t,\phi)| \leq r_c$, then it follows that $ | \alpha ( \Phi(t,\phi) ,x_s)| \leq r_c + \rho r_s$. Hence   
	\begin{align*}
	\|\cN_s\left( \alpha ( \Phi(t,\phi) ,x_s) , x_s \right) \|_{\ell^1} & \leq 2(r_c + \rho r_s) \|x_s\|_{\ell^1}  + \|x_s\|_{\ell^1}^2 \\
	& \leq \delta_1 \|x_s\|_{\ell^1}.
	\end{align*}
	Returning to our variation of constants formula, we have:
	\begin{align*}
	e^{\mu t} x_s(t )  &\leq \|\xi\|_{\ell^1} + \int_0^t e^{\mu \tau} \delta_1 \|x_s(\tau)\|_{\ell^1} d \tau .
	\end{align*}
	The proposition follows from Gr\"onwall's inequality. 
\end{proof}

\begin{prop} \label{prop:ExponentialTrackingStable}
	Fix $ \alpha \in \cB, \phi \in B_c(r_c)$, and $ \xi, \zeta \in B_s(r_s)$, and  define 
	\begin{align*}
	\delta_2 = 2 r_c+ 2(1+ \rho ) r_s.
	\end{align*}
	If $ \delta_2 <\mu $ then 
	\begin{align*}
	\|x(t,\phi,\xi,\alpha) -x(t,\phi, \zeta , \alpha) \|_{\ell^1} \leq e^{-(\mu -\delta_2) t} \| \xi - \zeta \|_{\ell^1} .
	\end{align*}

\end{prop}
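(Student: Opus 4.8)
The plan is to run the standard variation-of-constants and Gr\"onwall argument on the difference of the two trajectories. Abbreviate $x(t)\bydef x(t,\phi,\xi,\alpha)$ and $y(t)\bydef x(t,\phi,\zeta,\alpha)$; both solve \eqref{eq:ProjectedSystem} with the \emph{same} map $\alpha$ and the \emph{same} $\phi$, only the initial condition differing. Subtracting the two variation-of-constants representations yields
\[
x(t)-y(t)=e^{\fL t}(\xi-\zeta)+\int_0^t e^{\fL(t-\tau)}\Big[\cN_s\big(\alpha(\Phi(\tau,\phi),x(\tau)),x(\tau)\big)-\cN_s\big(\alpha(\Phi(\tau,\phi),y(\tau)),y(\tau)\big)\Big]\,d\tau ,
\]
so, taking $\|\cdot\|_{\ell^1}$ and invoking $\|e^{\fL t}\|_{B(\ell^1)}\le e^{-\mu t}$, the whole matter reduces to a Lipschitz-type estimate for the bracketed difference of nonlinear terms.

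For that estimate I would use that $\cN_s$ is the $X_s$-part of a convolution square: writing $a\bydef(\alpha(\Phi(\tau,\phi),x(\tau)),x(\tau))$ and $b\bydef(\alpha(\Phi(\tau,\phi),y(\tau)),y(\tau))$ as sequences in $\ell^1$, the bracket is $\expig$ times the restriction to $X_s$ of $a*a-b*b=a*(a-b)+(a-b)*b$, to which the Banach algebra inequality \eqref{eq:banach_algebra} applies. Plugging in the a priori facts from Proposition~\ref{prop:NormBound} (namely $x(\tau),y(\tau)\in B_s(r_s)$, hence $\|x(\tau)\|_{\ell^1},\|y(\tau)\|_{\ell^1}\le r_s$), the bound $|\Phi(\tau,\phi)|\le r_c$ together with the resulting $|\alpha(\Phi(\tau,\phi),\cdot)|\le r_c+\rho r_s$, and the Lipschitz property $|\alpha(\Phi(\tau,\phi),x(\tau))-\alpha(\Phi(\tau,\phi),y(\tau))|\le\rho\|x(\tau)-y(\tau)\|_{\ell^1}$ built into the definition of $\cB$, one obtains
\[
\big\|\cN_s\big(\alpha(\Phi(\tau,\phi),x(\tau)),x(\tau)\big)-\cN_s\big(\alpha(\Phi(\tau,\phi),y(\tau)),y(\tau)\big)\big\|_{\ell^1}\le\delta_2\,\|x(\tau)-y(\tau)\|_{\ell^1},
\]
with $\delta_2=2r_c+2(1+\rho)r_s$ as in the statement.

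Substituting this into the first display, the scalar function $u(t)\bydef e^{\mu t}\|x(t)-y(t)\|_{\ell^1}$ satisfies $u(t)\le\|\xi-\zeta\|_{\ell^1}+\delta_2\int_0^t u(\tau)\,d\tau$, so Gr\"onwall's inequality (equivalently Lemma~\ref{lem:GronwallType} with $c_0=-\mu$, $c_1=\|\xi-\zeta\|_{\ell^1}$, $c_2=\delta_2$, $c_3=0$) gives $u(t)\le e^{\delta_2 t}\|\xi-\zeta\|_{\ell^1}$, that is $\|x(t)-y(t)\|_{\ell^1}\le e^{-(\mu-\delta_2)t}\|\xi-\zeta\|_{\ell^1}$; the hypothesis $\delta_2<\mu$ is what makes this a genuine exponential-tracking bound. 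The one delicate point—the rest being a routine parallel of the proof of Proposition~\ref{prop:NormBound}—is the middle step: the bilinear difference $a*a-b*b$ has to be split so that the Lipschitz constant $\rho$ of $\alpha$ multiplies the small factors $r_s$ rather than $r_c$, and each summand bounded by its sharpest a priori constant, in order for the resulting Lipschitz constant to be exactly $\delta_2$.
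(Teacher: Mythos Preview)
Your proposal is correct and follows essentially the same approach as the paper: both apply variation of constants to the difference $x-y$, estimate $\|\cN_s(x_c,x_s)-\cN_s(y_c,y_s)\|_{\ell^1}$ by $\delta_2\|x_s-y_s\|_{\ell^1}$ using the a priori bounds from Proposition~\ref{prop:NormBound} together with the Lipschitz property of $\alpha$, and then close with Gr\"onwall's inequality. The paper carries out the Lipschitz estimate via the intermediate split $\cN_s(x_c,x_s)-\cN_s(x_c,z_s)+\cN_s(x_c,z_s)-\cN_s(z_c,z_s)$ rather than the symmetric $a*(a-b)+(a-b)*b$ you describe, but the rest is identical.
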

\begin{proof}
	Let us abbreviate $x_s = x_s(t) = x(t,\phi,\xi,\alpha)
	$ and $z_s = z_s(t) = x(t,\phi, \zeta , \alpha)$. Additionally, let us abbreviate $ x_c = \alpha ( \Phi(t,\phi),x_s(t)) $ and $ z_c = \alpha ( \Phi(t,\phi),z_s(t)) $.
	By variation of constants we have 
	\begin{align*}
	\|x_s(t ) - z_s(t)\|_{\ell^1} &\leq e^{- \mu t} \|\xi - \zeta\|_{\ell^1} + \int_0^t e^{-\mu (t-\tau)} \|\cN_s\left( x_c, x_s \right) -\cN_s\left( z_c , z_s \right)\|_{\ell^1} d \tau .
	\end{align*}
	We calculate
	\begin{align*}
	\|\cN_s\left( x_c, x_s \right) -\cN_s\left( z_c , z_s \right)\|_{\ell^1} & \leq  \|\cN_s\left( x_c, x_s \right) -\cN_s\left( x_c, z_s \right) \|_{\ell^1} + \|\cN_s\left( x_c, z_s \right)  -\cN_s\left( z_c , z_s \right)\|_{\ell^1} \\
	&\leq		2 | x_c| \| x_s - z_s\|_{\ell^1} +\|x_s^2 - z_s^2\|_{\ell^1} + 2 |x_c -z_c| \|z_s\|_{\ell^1} \\
	& \leq \|x_s -z_s\|_{\ell^1} (2 r_c + 2 r_s) + 2 r_s \rho \|x_s - z_s\|_{\ell^1} \\
	&= \delta_2 \|x_s - z_s\|_{\ell^1} .
	\end{align*}
	Returning to our variation of constants formula, we have:
	\begin{align*}
	e^{\mu t}\| x_s(t ) - z_s(t)\|_{\ell^1}  &\leq \|\xi\|_{\ell^1} + \int_0^t e^{\mu \tau} \delta_2 \|x_s(\tau) - z_s (\tau) \|_{\ell^1} d \tau . 
	\end{align*}
	The proposition follows from Gr\"onwall's inequality. 
\end{proof}

\begin{prop} \label{prop:ExponentialTrackingCenter}
	Fix $ \alpha \in \cB$, $\phi_1 , \phi_2 \in B_c(r_c)$ and $ \xi \in B_s (r_s)$.  Fix $ \epsilon >0$  such that $ | \alpha(\phi_1,\zeta) - \alpha(\phi_2 , \zeta)| < \epsilon |\phi_1 - \phi_2|$ for all $ \zeta \in B_s(r_s)$.  Then we have: 
	\[
	\| x(t, \phi_1 , \xi , \alpha) - x(t, \phi_2,\xi,\alpha)\|_{\ell^1} < 2 \epsilon r_s | \phi_1 - \phi_2 | 
	\frac{e^{-(\mu-\delta_1)t} -  e^{-(\mu-\delta_2)t} }{-(\mu-\delta_1) +(\mu-\delta_2)}
	.
	\]
\end{prop}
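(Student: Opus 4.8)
The plan is to write the difference of the two solutions via the variation of constants formula and close a Gr\"onwall estimate with Lemma~\ref{lem:GronwallType}. Write $x_s(\tau)\bydef x(\tau,\phi_1,\xi,\alpha)$, $z_s(\tau)\bydef x(\tau,\phi_2,\xi,\alpha)$ and $w(\tau)\bydef x_s(\tau)-z_s(\tau)$. Both $x_s$ and $z_s$ solve \eqref{eq:ProjectedSystem} with the \emph{same} initial datum $\xi$; only the center-driving term $\Phi(\tau,\phi_i)$ fed into $\alpha$ differs, so the initial-condition contribution cancels. Using $\|e^{\fL t}\|_{B(\ell^1)}\le e^{-\mu t}$ one gets
\[
e^{\mu t}\|w(t)\|_{\ell^1}\le\int_0^t e^{\mu\tau}\,\big\|\cN_s(x_c,x_s)-\cN_s(z_c,z_s)\big\|_{\ell^1}\,d\tau,
\]
where I abbreviate $x_c=\alpha(\Phi(\tau,\phi_1),x_s(\tau))$ and $z_c=\alpha(\Phi(\tau,\phi_2),z_s(\tau))$.

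The heart of the argument is to split the integrand by inserting $\cN_s(\hat x_c,x_s)$ with $\hat x_c=\alpha(\Phi(\tau,\phi_2),x_s(\tau))$. The term $\cN_s(x_c,x_s)-\cN_s(\hat x_c,x_s)$ changes only the center component, in which $\cN_s$ is bilinear, so $\|\cN_s(x_c,x_s)-\cN_s(\hat x_c,x_s)\|_{\ell^1}\le 2|x_c-\hat x_c|\,\|x_s\|_{\ell^1}$. Here $|x_c-\hat x_c|\le\epsilon\,|\Phi(\tau,\phi_1)-\Phi(\tau,\phi_2)|$, and from the explicit separable flow one computes
\[
\Phi(\tau,\phi_1)-\Phi(\tau,\phi_2)=\frac{\phi_1-\phi_2}{(1-\phi_1\tau e^{i\theta})(1-\phi_2\tau e^{i\theta})},
\]
whose denominator has modulus $\ge1$ because $\mathrm{Re}(e^{i\theta}\phi_i)\le0$ and $\tau\ge0$ force each factor to have real part $\ge1$; hence $|x_c-\hat x_c|\le\epsilon|\phi_1-\phi_2|$. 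Combining with Proposition~\ref{prop:NormBound}, which gives $\|x_s(\tau)\|_{\ell^1}\le e^{-(\mu-\delta_1)\tau}\|\xi\|_{\ell^1}\le r_s e^{-(\mu-\delta_1)\tau}$, the first piece is bounded by $2\epsilon r_s|\phi_1-\phi_2|\,e^{-(\mu-\delta_1)\tau}$. The remaining term $\cN_s(\hat x_c,x_s)-\cN_s(z_c,z_s)$ varies only the second argument of $\alpha$, with common first argument $\Phi(\tau,\phi_2)$, which is verbatim the computation in the proof of Proposition~\ref{prop:ExponentialTrackingStable} and gives the bound $\delta_2\|w(\tau)\|_{\ell^1}$.

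Putting the two bounds together,
\[
e^{\mu t}\|w(t)\|_{\ell^1}\le\int_0^t 2\epsilon r_s|\phi_1-\phi_2|\,e^{\delta_1\tau}\,d\tau+\int_0^t\delta_2\,e^{\mu\tau}\|w(\tau)\|_{\ell^1}\,d\tau,
\]
which is exactly the hypothesis of Lemma~\ref{lem:GronwallType} with $c_0=-\mu$, $c_1=0$, $c_2=\delta_2$, $c_3=2\epsilon r_s|\phi_1-\phi_2|$ and $\mu_1=-(\mu-\delta_1)$, so that $\mu_0=c_0+c_2=-(\mu-\delta_2)\ne\mu_1$ (recall $\delta_2-\delta_1=r_s>0$). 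The lemma then produces precisely the claimed inequality, the strictness being inherited from the strict $\epsilon$-hypothesis.

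I expect the only genuinely delicate point to be the first-argument perturbation of $\alpha$: converting the $\epsilon$-Lipschitz hypothesis — stated at $\phi_1,\phi_2$ — into a bound at the evolved points $\Phi(\tau,\phi_1),\Phi(\tau,\phi_2)$. This is exactly what the non-expansivity of the center flow $\Phi(\tau,\cdot)$ on the half-plane $\{\mathrm{Re}(e^{i\theta}\phi)\le0\}$ is for. Everything else is a reassembly of the convolution/Lipschitz estimates for $\cN_s$ and the Gr\"onwall machinery already established in Propositions~\ref{prop:NormBound}--\ref{prop:ExponentialTrackingStable}.
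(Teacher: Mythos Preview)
Your argument is correct and matches the paper's: variation of constants plus Lemma~\ref{lem:GronwallType}, with the only cosmetic difference that you split $\cN_s$ by inserting the intermediate point $\hat x_c=\alpha(\Phi(\tau,\phi_2),x_s)$, whereas the paper expands $\cN_s$ directly and instead splits $|x_c-w_c|\le\rho\|x_s-w_s\|+\epsilon|\Phi(t,\phi_1)-\Phi(t,\phi_2)|$; both rearrangements feed the identical integral inequality into Lemma~\ref{lem:GronwallType}. One clarification on your closing remark: the $\epsilon$-bound is not transported to the evolved pair $(\Phi(\tau,\phi_1),\Phi(\tau,\phi_2))$ \emph{by} non-expansivity---both you and the paper tacitly read $\epsilon$ as a global Lipschitz constant for $\alpha(\cdot,\zeta)$ (which is exactly how the proposition is invoked in Proposition~\ref{prop:Endomorphism}), and non-expansivity of $\Phi(\tau,\cdot)$ is used only afterward to bound $|\Phi(\tau,\phi_1)-\Phi(\tau,\phi_2)|\le|\phi_1-\phi_2|$.
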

\begin{proof}
	Let us define $ x_s = x_s(t) = x(t, \phi_1 , \xi , \alpha) $ and $ w_s = w_s(t) = x(t, \phi_2, \xi , \alpha)$. Let us define $ x_c = x_c(t) = \alpha ( \Phi(t,\phi_1),x_s(t))$ and define $ w_c = w_c(t) = \alpha( \Phi(t, \phi_2 ) , w_s(t))$. 
	
	We compute and find 
	\begin{align*}
	\| \cN_s ( x_c , x_s ) - \cN_s(w_c,w_s)\| &\leq 2 | x_c| \|x_s - w_s\|_{\ell^1} + \| x_s^2 - w_s^2\|_{\ell^1} + 2 |x_c - w_c| \| w_s\|_{\ell^1}  \\
	& \leq \|x_s - w_s \|_{\ell^1} ( 2  r _c + 2 r_s) + 2 \| w_s \|_{\ell^1} |x_c - w_c| .
	\end{align*}
	We estimate $| x_c - w_c |$ below: 
	\begin{align*}
	| x_c - w_c | &\leq | \alpha( \Phi(t,\phi_1),x_s(t) ) - \alpha( \Phi(t,\phi_1) , w_s(t) ) | + | \alpha( \Phi(t,\phi_1),w_s(t) ) - \alpha( \Phi(t,\phi_2) , w_s(t) ) | \\
	&\leq \rho | x_s - w_s | + \epsilon | \Phi ( t, \phi_1 ) - \Phi (t, \phi_2) | .
	\end{align*}
	By our assumption  $ \mbox{Re}(e^{i \theta} \phi_1)  \leq 0 $ it follows that $ |1- \phi_1 t e^{i\theta}|^2 \geq  1 + |\phi_1 t|^2$ for $ t \geq 0$. 
	We bound $|\Phi(t,\phi_1) - \Phi(t,\phi_2) | $ below: 
	\begin{align}
	|\Phi(t,\phi_1) - \Phi(t,\phi_2) | 
	&= 
	\frac{|\phi_1 - \phi_2|}{|1- \phi_1 t e^{i\theta} | \cdot |1 - \phi_2 t e^{i\theta} |} \nonumber \\ 
	&\leq 
	\frac{|\phi_1 - \phi_2|}{\sqrt{1+ |\phi_1 t |^2} \sqrt{1+ |\phi_2 t |^2}}  \label{eq:TightPhiDifferenceBound} \\
	&\leq |\phi_1- \phi_2| . \nonumber
	\end{align}
	Hence, we obtain the bound: 
	\begin{align*}
	\| \cN_s ( x_c , x_s ) - \cN_s(w_c,w_s)\|_{\ell^1} &\leq  2 \epsilon \|w_s\|_{\ell^1}  | \phi_1 - \phi_2|  + 2 (r_c + r_s + \rho r_s ) \| x_s - w_s\|_{\ell^1} .
	\end{align*}
	By variation of constants we obtain the following: 
	\begin{align*}
	\| x_s(t) - w_s(t) \|_{\ell^1} &=\int_0^t e^{-\mu (t-\tau)} \|\cN_s\left( x_c, x_s \right) -\cN_s\left( w_c , w_s \right)\|_{\ell^1} d \tau  \\
	&\leq \int_0^t e^{-\mu(t-\tau) } 2 \epsilon r_s e^{-(\mu-\delta_1)t} |\phi_1 - \phi_2| d\tau
	+
	\int_0^t e^{-\mu(t-\tau) } \delta_2 \|x_s(\tau) -  w_s(\tau)\|_{\ell^1}  d\tau .
	\\
	e^{\mu t } \| x_s(t) - w_s(t) \|_{\ell^1} &\leq \int_0^t e^{\mu \tau } 2 \epsilon r_s e^{-(\mu-\delta_1)t} |\phi_1 - \phi_2| d\tau
	+
	\int_0^t e^{\mu \tau } \delta_2 \|x_s(\tau) -  w_s(\tau)\|_{\ell^1}  d\tau .
	\end{align*}
	By Lemma \ref{lem:GronwallType}  we have: 
	\[
	\|x_s(t)  - w_s(t) \|_{\ell^1} \leq 2 \epsilon r_s | \phi_1 - \phi_2|  \frac{e^{-(\mu-\delta_1)t} -  e^{-(\mu-\delta_2)t} }{-(\mu-\delta_1) +(\mu-\delta_2)} .
	\] 
\end{proof}

\begin{prop}
	\label{prop:Endomorphism}
	Define:  
	\begin{align}
	\delta_3 =   2 ( \rho ( r_c + \rho r_s) + r_s).
	\end{align}
	If $ \delta_2  <\mu $ and $\frac{\delta_3}{\mu - \delta_2}< \rho$, then the map $ \Psi: \cB \to \cB$ is a well defined endomorphism.   
\end{prop}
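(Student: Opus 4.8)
The plan is to fix an arbitrary $\alpha\in\cB$ and verify, one at a time, the three defining properties of $\cB$ for $\Psi[\alpha]$. First, for each $(\phi,\xi)\in B_c(r_c)\times B_s(r_s)$ the mild solution $x(t,\phi,\xi,\alpha)$ of \eqref{eq:ProjectedSystem} exists for all $t\ge0$: local existence for the semilinear equation combined with the a priori bound of Proposition~\ref{prop:NormBound} keeps it inside $B_s(r_s)$, and that proposition gives $\|x(t,\phi,\xi,\alpha)\|_{\ell^1}\le e^{-(\mu-\delta_1)t}\|\xi\|_{\ell^1}$ (note $\delta_1<\delta_2$, so the hypothesis $\delta_2<\mu$ already forces $\delta_1<\mu$). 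Using $|\cN_c(a_c,a_s)|\le|a_c|^2+\|a_s\|_{\ell^1}^2$, the bound $|\alpha(\Phi(t,\phi),x_s)|\le|\Phi(t,\phi)|+\rho\|x_s\|_{\ell^1}$ (from the $\rho$-Lipschitz property and $\alpha(\cdot,0)=\mathrm{Id}$), and the decay $|\Phi(t,\phi)|\le|\phi|(1+|\phi|^2t^2)^{-1/2}$ (which uses $\mathrm{Re}(\expig\phi)\le0$), the integrand of $\Psi[\alpha](\phi,\xi)$ is dominated by an integrable function of $t$, so $\Psi[\alpha]$ is a well-defined map into $X_c$. The boundary condition is then immediate: for $\xi=0$ the zero function solves \eqref{eq:ProjectedSystem} since $\cN_s(\cdot,0)=0$, hence $x(t,\phi,0,\alpha)\equiv0$ and $\alpha(\Phi(t,\phi),0)=\Phi(t,\phi)$, and with $\dot\Phi=\expig\Phi^2$, $\Phi(t,\phi)\to0$ we get
\[
\Psi[\alpha](\phi,0)=-\int_0^\infty\expig\,\Phi(t,\phi)^2\,dt=-\int_0^\infty\dot\Phi(t,\phi)\,dt=\phi.
\]

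Second, I would establish the quantitative core, the $\rho$-Lipschitz bound in the stable variable. Writing $x_s^i(t)=x(t,\phi,\xi_i,\alpha)$ and $v^i(t)=\bigl(\alpha(\Phi(t,\phi),x_s^i(t)),x_s^i(t)\bigr)$, and using that $\cN_c$ is a quadratic form so that $\cN_c(v^1)-\cN_c(v^2)$ is bounded, through the associated symmetric bilinear form, by $|\delta_c||\sigma_c|+\|\delta_s\|_{\ell^1}\|\sigma_s\|_{\ell^1}$ with $(\delta_c,\delta_s)=v^1-v^2$ and $(\sigma_c,\sigma_s)=v^1+v^2$, the estimates $|\delta_c|\le\rho\|x_s^1-x_s^2\|_{\ell^1}$, $|\sigma_c|\le2(r_c+\rho r_s)$, $\|\sigma_s\|_{\ell^1}\le2r_s$ give
\[
\bigl|\cN_c(v^1(t))-\cN_c(v^2(t))\bigr|\le\delta_3\,\|x_s^1(t)-x_s^2(t)\|_{\ell^1}.
\]
Proposition~\ref{prop:ExponentialTrackingStable} supplies $\|x_s^1(t)-x_s^2(t)\|_{\ell^1}\le e^{-(\mu-\delta_2)t}\|\xi_1-\xi_2\|_{\ell^1}$, so integrating over $t\ge0$ gives $|\Psi[\alpha](\phi,\xi_1)-\Psi[\alpha](\phi,\xi_2)|\le\tfrac{\delta_3}{\mu-\delta_2}\|\xi_1-\xi_2\|_{\ell^1}\le\rho\|\xi_1-\xi_2\|_{\ell^1}$, the last step being precisely the hypothesis.

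Third, to conclude $\Psi[\alpha]\in\cB$ it remains to check that $\Psi[\alpha]$ is jointly Lipschitz. I would bound $|\Psi[\alpha](\phi_1,\xi)-\Psi[\alpha](\phi_2,\xi)|$ by the same bilinear decomposition, now controlling $\|x(t,\phi_1,\xi,\alpha)-x(t,\phi_2,\xi,\alpha)\|_{\ell^1}$ with Proposition~\ref{prop:ExponentialTrackingCenter} and $|\Phi(t,\phi_1)-\Phi(t,\phi_2)|$ with the refined estimate \eqref{eq:TightPhiDifferenceBound}. I expect this last point to be the main obstacle: the center coordinate $\Phi(t,\phi)$ decays only like $t^{-1}$, so the time integral governing the $\phi$-dependence is only borderline convergent. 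The key observation is that in the bilinear decomposition every occurrence of $|\Phi(t,\phi_1)-\Phi(t,\phi_2)|$ is multiplied either by a factor $|\Phi(t,\phi_i)|$ --- so that, using \eqref{eq:TightPhiDifferenceBound}, the integrand is $O(t^{-2})$ --- or by $\|x_s^i(t)\|_{\ell^1}$, which decays exponentially; in either case the integral over $[0,\infty)$ is finite, and this has to be checked on a few cases (including $\phi_1$ or $\phi_2$ near $0$). All remaining estimates reduce to the a priori decay bounds of Propositions~\ref{prop:NormBound}--\ref{prop:ExponentialTrackingCenter} and elementary scalar integrals such as $\int_0^\infty|\phi|(1+|\phi|^2t^2)^{-1}\,dt=\pi/2$.
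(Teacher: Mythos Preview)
Your proposal is correct and follows essentially the same approach as the paper: the paper's proof is likewise organized around (i) the $\rho$-Lipschitz estimate in $\xi$ via the $\delta_3$ bound on $|\cN_c(x_c,x_s)-\cN_c(z_c,z_s)|$ combined with Proposition~\ref{prop:ExponentialTrackingStable}, and (ii) a Lipschitz estimate in $\phi$ via Proposition~\ref{prop:ExponentialTrackingCenter} together with the refined bound \eqref{eq:TightPhiDifferenceBound}, where the borderline $t^{-1}$ decay of $\Phi$ is tamed exactly by pairing it either with another factor of $|\Phi|$ (giving the $\int_0^\infty |\phi|(1+|\phi t|^2)^{-1}\,dt=\pi/2$ contribution you anticipate) or with the exponentially decaying $\|x_s\|_{\ell^1}$. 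Your treatment of well-definedness of the integral and of the boundary condition $\Psi[\alpha](\phi,0)=\phi$ via $-\int_0^\infty\dot\Phi\,dt=\phi$ is a bit more explicit than the paper's, but otherwise the arguments coincide.
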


\begin{proof}

	\textbf{Step 1.} 
	Fix some $ \alpha \in \cB$ and  $	\phi \in B_c(r_c)$ and $ \xi ,\zeta \in B_s(r_s)$.
	We show that	$ |  \Psi[\alpha](\phi ,\xi) - \Psi[\alpha](\phi ,\zeta) | \leq \rho \| \xi - \zeta\|_{\ell^1}$.
	Let us abbreviate $x_s = x_s(t) = x(t,\phi,\xi,\alpha)
	$ and $z_s = z_s(t) = x(t,\phi, \zeta , \alpha)$. Additionally, let us abbreviate $ x_c = \alpha ( \Phi(t,\phi),x_s(t)) $ and $ z_c = \alpha ( \Phi(t,\phi),z_s(t)) $. 
	We calculate: 
	\begin{align*}
	|		\cN_c( x_c ,x_s) - \cN_c( z_c , z_s) |  &\leq | x_c^2 - z_c^2 | + \|x_s^2 - z_s^2 \|_{\ell^1} \\ 
	&= 2 (r_c + \rho r_s) |x_c - z_c| + 2 r_s \| x_s - z_s\|_{\ell^1} \\
	&= \delta_3  \|x_s - z_s \|_{\ell^1} .
	\end{align*} 
	We calculate: 
	\begin{align*}
	| \Psi [\alpha ] (\phi , \xi) - \Psi [\alpha ] (\phi , \zeta )| &\leq \int_0^\infty  |		\cN_c( x_c ,x_s) - \cN_c( z_c , z_s) |  dt \\
	&\leq \int_0^\infty \delta_3 \| x_s - z_s \|_{\ell^1} dt \\
	&\leq \int_0^\infty  \delta_3  e^{-(\mu-\delta_2)t} \|\xi - \zeta \|_{\ell^1}  dt  \\
	& = \frac{\delta_3}{\mu - \delta_2 } \|\xi - \zeta\|_{\ell^1}.  
	\end{align*}
	By our assumption that $\frac{\delta_3}{\mu - \delta_2}< \rho$, it follows that we show that	$ |  \Psi[\alpha](\phi ,\xi) - \Psi[\alpha](\phi ,\zeta) | \leq \rho \| \xi - \zeta\|_{\ell^1}$.

	\textbf{Step 2.}  We  show that  the map $ \Psi[\alpha](\cdot , \xi )$ is Lipschitz for fixed $ \xi \in B_s$. 		
	Fix some $ \alpha \in  \cB$ and $ \phi_1 ,\phi_2  \in B_c(r_c)$ and $ \xi \in B_s(r_s)$.  
	As $\alpha$ is Lipschitz, fix  some $ \epsilon > 0$ such that     $ |\alpha( \phi_1 , \zeta ) - \alpha(\phi_2 , \zeta) | \leq \epsilon | \phi_1 - \phi_2|$ for all $ \zeta \in B_s(r_s)$.  
	Let us define $ x_s = x_s(t) = x(t, \phi_1 , \xi , \alpha) $ and $ w_s = w_s(t) = x(t, \phi_2, \xi , \alpha)$. Let us define $ x_c = x_c(t) = \alpha ( \Phi(t,\phi_1),x_s(t))$ and define $ w_c = w_c(t) = \alpha( \Phi(t, \phi_2 ) , w_s(t))$. 
	We calculate: 
	\begin{align*}
	| \cN_c (x_c , x_s) - \cN_c(w_c ,w_s) | &\leq | x_c^2 - w_c^2 | + \| x_s^2 - w_s^2\|_{\ell^1} \\
	&\leq | \alpha( \Phi(t,\phi_1),x_s)^2 - \alpha( \Phi(t,\phi_2),x_s)^2 | + 2 (r _c  + r_s + \rho r_s) \| x_s - w_s\|_{\ell^1}  .
	\end{align*}
	We can bound the first summand as follows:
	\begin{align*}
	| \alpha( \Phi(t,\phi_1),x_s)^2 - \alpha( \Phi(t,\phi_2),x_s)^2 | &\leq 
	| \alpha( \Phi(t,\phi_1),x_s) + \alpha ( \Phi(t, \phi_2),x_s) | \cdot | \alpha( \Phi(t,\phi_1),x_s) - \alpha ( \Phi(t, \phi_2 ) , x_s) | \\
	&\leq\Big| 2 \rho | x_s | + \Phi(t,\phi_1) + \Phi(t,\phi_2) \Big| \epsilon | \Phi(t,\phi_1) - \Phi(t,\phi_2)| \\
	&\leq \epsilon | \phi_1 - \phi_2| \left( 
	2 \rho| x_s| + \frac{|\phi_1|}{1 + | \phi_1 t|^2} + \frac{|\phi_2|}{1 + | \phi_2 t|^2}
	\right).
	\end{align*}
	In the last inequality, we used our estimate from \eqref{eq:TightPhiDifferenceBound}. 
	Using our calculated bound on $| \cN_c (x_c , x_s) - \cN_c(w_c ,w_s) | $, we estimate $| \Psi[\alpha](\phi_1,\xi) - \Psi[\alpha] (\phi_2,\xi) | $ below: 
	\begin{align*}
	| \Psi[\alpha](\phi_1,\xi) - \Psi[\alpha] (\phi_2,\xi) | &\leq 
	\epsilon |\phi_1 - \phi_2| \int_0^\infty \rho r_s e^{-(\mu-\delta_1)t}  + \frac{|\phi_1|}{1 + | \phi_1 t|^2} + \frac{|\phi_2|}{1 + | \phi_2 t|^2} dt  \\
	& \qquad + 2(r_c + r_s + \rho r_s)\int_0^\infty 2 \epsilon r_s | \phi_1 - \phi_2|  \frac{e^{-(\mu-\delta_1)t} -  e^{-(\mu-\delta_2)t} }{-(\mu-\delta_1) +(\mu-\delta_2)}  dt  .
	\end{align*}
	The second integral can be evaluated as follows:
	\begin{align}
	\int_0^\infty   \frac{e^{-(\mu-\delta_1)t}  - e^{-(\mu-\delta_2)t} }{-(\mu-\delta_1)+(\mu-\delta_2)} dt & =  \frac{1}{-(\mu-\delta_1)+(\mu-\delta_2)} \left( \frac{1}{(\mu-\delta_1)} -\frac{1}{(\mu-\delta_2)} \right) \nonumber \\
	&= \frac{1}{(\mu-\delta_1)(\mu-\delta_2)} \label{eq:FunnyIntegral}.
	\end{align}
	Hence we obtain the final estimate: 
	\begin{align*}
	| \Psi[\alpha](\phi_1,\xi) - \Psi[\alpha] (\phi_2,\xi) | &\leq \epsilon \left (  
	\frac{\rho r_s}{\mu - \delta_1} + \pi + \frac{4 r_s (r_c + r_s + \rho r_s )}{(\mu - \delta_1)(\mu - \delta_2)}
	\right) | \phi_1 - \phi_2|  .
	\end{align*} 
	
	We have thus shown that $ \Psi[\alpha]$ is Lipschitz in each of its coordinates, thus $ \Psi[\alpha] \in \mbox{Lip} ( B_c \times B_s , X_c)$. 
	Then for all $ \alpha \in \cB$ and $ \phi \in X_c$, since $ \Phi(t,\phi) $ solves \eqref{eq:CenterRestrictedDE}, we have  $ \Psi[\alpha ](\phi,0)=\phi$.  Thereby, $\Psi : \cB\to \cB$ is a well defined endomorphism. 	
\end{proof}

\begin{dfn} \label{def:WeakerNorm}
	For $ \alpha \in \cB$ define the norm: 
	\begin{align*}
	\| \alpha \|_{\cE} \bydef \sup_{x_s \in B_s(r_s) ; x_s \neq 0 } \frac{\| \alpha (x_c,x_s)-x_c\|_{\ell^1}}{\|x_s\|_{\ell^1}} .
	\end{align*}
\end{dfn}
This norm makes $ \cB$ a complete metric space. 
We will show that $ \Psi $ is a contraction on a ball of functions in the $ \| \cdot \|_{\cE}$ norm. 
But first we prove a helpful estimate. 
\begin{prop} \label{prop:DifferentMapTracking}
	Fix $ \alpha , \beta \in \cB, \phi \in B_c(r_c), \xi \in B_s(r_s)$.
	Define 
	\begin{align*}
	\delta_4 &= 2 (r_c + 2 \rho r_s + r_s)  .
	\end{align*}
	Then,
	\begin{align*}
	\|x(t,\phi,\xi,\alpha)
	- x(t,\phi,\xi,\beta)\|_{\ell^1} \leq 2 r_s  \| \alpha - \beta \|_{\cE} \| \xi \|_{\ell^1} \left( \frac{e^{-(\mu-\delta_1)t}  - e^{-(\mu-\delta_4)t} }{-(\mu-\delta_1)+(\mu-\delta_4)} \right) .
	\end{align*}
\end{prop}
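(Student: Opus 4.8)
The plan is to follow the template of Propositions~\ref{prop:ExponentialTrackingStable} and~\ref{prop:ExponentialTrackingCenter}. Abbreviate $x_s(t) \bydef x(t,\phi,\xi,\alpha)$, $y_s(t) \bydef x(t,\phi,\xi,\beta)$, $x_c(t) \bydef \alpha(\Phi(t,\phi),x_s(t))$ and $y_c(t) \bydef \beta(\Phi(t,\phi),y_s(t))$, so that both $x_s$ and $y_s$ solve \eqref{eq:ProjectedSystem} from the \emph{same} initial data $\xi$ but with the maps $\alpha$ and $\beta$ respectively. Since the initial data agree, the variation of constants formula produces no boundary term:
\[
\| x_s(t) - y_s(t) \|_{\ell^1} \le \int_0^t e^{-\mu(t-\tau)} \| \cN_s(x_c,x_s) - \cN_s(y_c,y_s) \|_{\ell^1} \, d\tau .
\]
By Proposition~\ref{prop:NormBound} both trajectories stay in $B_s(r_s)$ and $\| y_s(\tau) \|_{\ell^1} \le e^{-(\mu-\delta_1)\tau} \| \xi \|_{\ell^1}$; carrying this decaying factor through the estimates is what eventually delivers the right-hand side of the claim.

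The heart of the argument is the pointwise bound on the integrand. As in the earlier propositions I would split
\[
\| \cN_s(x_c,x_s) - \cN_s(y_c,y_s) \|_{\ell^1} \le 2|x_c| \, \| x_s - y_s \|_{\ell^1} + \| x_s * x_s - y_s * y_s \|_{\ell^1} + 2|x_c - y_c| \, \| y_s \|_{\ell^1} ,
\]
and then estimate each term: $|x_c| \le r_c + \rho r_s$ from $|\Phi(t,\phi)| \le r_c$ and the $\cB$-Lipschitz constant $\rho$; $\| x_s * x_s - y_s * y_s \|_{\ell^1} = \| (x_s - y_s) * (x_s + y_s) \|_{\ell^1} \le 2 r_s \| x_s - y_s \|_{\ell^1}$ by the Banach algebra inequality \eqref{eq:banach_algebra}; and, crucially,
\[
|x_c - y_c| \le | \alpha(\Phi(t,\phi),x_s) - \alpha(\Phi(t,\phi),y_s) | + | \alpha(\Phi(t,\phi),y_s) - \beta(\Phi(t,\phi),y_s) | \le \rho \| x_s - y_s \|_{\ell^1} + \| \alpha - \beta \|_{\cE} \, \| y_s \|_{\ell^1} ,
\]
where the first summand uses the Lipschitz property built into $\cB$ and the second uses Definition~\ref{def:WeakerNorm}. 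Multiplying the last inequality by $2 \| y_s \|_{\ell^1}$, bounding $\| y_s \|_{\ell^1} \le r_s$ in the $\rho$-term and $\| y_s \|_{\ell^1}^2 \le r_s\, e^{-(\mu-\delta_1)\tau} \| \xi \|_{\ell^1}$ in the other, and collecting everything, the integrand is bounded by
\[
\delta_4 \, \| x_s(\tau) - y_s(\tau) \|_{\ell^1} + 2 r_s \, \| \alpha - \beta \|_{\cE} \, \| \xi \|_{\ell^1} \, e^{-(\mu-\delta_1)\tau} ,
\]
since $2(r_c + \rho r_s) + 2 r_s + 2\rho r_s = 2(r_c + 2\rho r_s + r_s) = \delta_4$.

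To conclude, I would insert this into the variation of constants inequality, multiply through by $e^{\mu t}$, and apply the Gr\"onwall-type Lemma~\ref{lem:GronwallType} with $c_0 = -\mu$, $c_1 = 0$, $c_2 = \delta_4$, $c_3 = 2 r_s \| \alpha - \beta \|_{\cE} \| \xi \|_{\ell^1}$ and $\mu_1 = -(\mu - \delta_1)$, so that $\mu_0 = c_0 + c_2 = -(\mu - \delta_4)$; the hypothesis $\mu_0 \ne \mu_1$ holds automatically because $\delta_4 - \delta_1 = (1 + 2\rho) r_s \ne 0$. The lemma then yields exactly
\[
\| x_s(t) - y_s(t) \|_{\ell^1} \le 2 r_s \, \| \alpha - \beta \|_{\cE} \, \| \xi \|_{\ell^1} \, \frac{e^{-(\mu-\delta_1)t} - e^{-(\mu-\delta_4)t}}{-(\mu-\delta_1) + (\mu-\delta_4)} ,
\]
which is the asserted estimate. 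The one delicate point — and the main obstacle — is the handling of the term $2|x_c - y_c| \, \| y_s \|_{\ell^1}$: one must not bound $\| y_s \|_{\ell^1}$ crudely by $r_s$ throughout, but rather spend one factor of $\| y_s \|_{\ell^1}$ on $r_s$ and keep the other as the exponentially decaying bound, which is precisely what casts the estimate in the form handled by Lemma~\ref{lem:GronwallType} with the stated exponents. Everything else is routine triangle-inequality bookkeeping of the kind already carried out in Propositions~\ref{prop:ExponentialTrackingStable}--\ref{prop:ExponentialTrackingCenter}.
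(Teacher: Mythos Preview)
Your argument is correct and follows essentially the same route as the paper: the same variation-of-constants starting point, the same triangle-inequality decomposition of $\cN_s(x_c,x_s)-\cN_s(y_c,y_s)$, the same bound $|x_c-y_c|\le \rho\|x_s-y_s\|_{\ell^1}+\|\alpha-\beta\|_{\cE}\|y_s\|_{\ell^1}$, and the same appeal to Proposition~\ref{prop:NormBound} and Lemma~\ref{lem:GronwallType} to close. The only cosmetic difference is that the paper first bounds $2|x_c-y_c|\,\|y_s\|_{\ell^1}\le 2r_s|x_c-y_c|$ and then substitutes for $|x_c-y_c|$, whereas you substitute first and then split the two occurrences of $\|y_s\|_{\ell^1}$; the resulting inequality and constants are identical.
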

\begin{proof}
	Let us abbreviate $x_s = x_s(t) = x(t,\phi,\xi,\alpha)
	$ and $y_s = y_s(t) = x(t, \phi, \xi, \beta)$. Additionally, let us abbreviate $ x_c = \alpha ( \Phi(t,\phi),x_s(t)) $ and $ y_c = \beta( \Phi(t,\phi),y_s(t)) $.

	By variations of constants we have the following:
	\begin{align*}
	x_s(t) - y_s(t) & = \int_0^t e^{\fL (t-\tau)} \left(  \cN_s ( x_c,x_s) - \cN_s(y_c,y_s) \right) d \tau .
	\end{align*}
	We bound the difference of the nonlinearities: 
	\begin{align*}
	\|	 \cN_s ( x_c,x_s) - \cN_s(y_c,y_s)\|_{\ell^1} & \leq 2\| x_c x_s - y_c y_s \|_{\ell^1} + \| x_s^2 - y_s^2 \|_{\ell^1} \\
	&\leq 2\|x_c x_s -  x_c y_s \|_{\ell^1} + 2 \|x_c y_s - y_c y_s \|_{\ell^1} + 2 r_s \|x_s - y_s\|_{\ell^1} \\
	&\leq 2|x_c |\| x_s -   y_s \|_{\ell^1} + 2 |x_c - y_c  | \|y_s\|_{\ell^1}+ 2 r_s  \|x_s - y_s\|_{\ell^1}  \\
	&\leq 2 (r_c + \rho r_s + r_s) \|x_s - y_s \|_{\ell^1} + 2 r_s | x_c - y_c|  .
	\end{align*}
	We calculate: 
	\begin{align*}
	|x_c - y_c| &\leq 
	| \alpha ( \Phi(t,\phi),x_s) - \alpha( \Phi(t,\phi),y_s) |
	+ |  \alpha( \Phi(t,\phi),y_s) - \beta( \Phi(t,\phi),y_s) | \\
	&\leq \rho \|x_s - y_s\|_{\ell^1} + \| \alpha - \beta \|_{\cE} \| y_s\|_{\ell^1}  .
	\end{align*}
	Hence: 
	\begin{align*}
	\|	 \cN_s ( x_c,x_s) - \cN_s(y_c,y_s)\|_{\ell^1} 
	&\leq 2 (r_c + 2 \rho r_s + r_s) \|x_s - y_s \|_{\ell^1} + 2 r_s \| \alpha - \beta \|_{\cE} \|y_s\|_{\ell^1}  \\
	&=  \delta_4 \|x_s - y_s \|_{\ell^1} + 2 r_s \| \alpha - \beta \|_{\cE} \|y_s\|_{\ell^1} .
	\end{align*}
	We may continue calculating using variation of constants: 
	\begin{align*}
	e^{\mu t} \| x_s - y_s\|_{\ell^1} &\leq \int_0^t e^{\mu \tau} \left( \delta_4 \|x_s - y_s \|_{\ell^1} + 2 r_s \| \alpha - \beta \|_{\cE} \|y_s\|_{\ell^1} \right) d \tau \\
	& \leq  \int_0^t e^{\mu \tau} 2 r_s \| \alpha - \beta \|_{\cE} e^{-(\mu -\delta_1)\tau} \| \xi\|_{\ell^1}  d \tau + \int_0^t e^{\mu \tau} \delta_4 \|x_s - y_s \|_{\ell^1} d\tau .
	\end{align*}
	By  Lemma \ref{lem:GronwallType}, we have that: 
	\begin{align*}
	\| x_s - y_s\|_{\ell^1} & \leq  2 r_s \| \alpha - \beta \|_{\cE}  \| \xi\|_{\ell^1} \frac{e^{-(\mu-\delta_1)t} - e^{-(\mu-\delta_4)t}}{-(\mu-\delta_1)+(\mu-\delta_4)}.
	\end{align*}
	The proposition follows. 
\end{proof}

We can now show that $ \Psi $ is a contraction map.

\begin{proof}[Proof of Theorem \ref{prop:MainGlobalTheorem}]

	Fix $ \alpha , \beta \in \cB, \phi \in B_c(r_c), \xi \in B_s(r_s)$, and define:
	\begin{align*}
	\lambda &\bydef \frac{ 2 (  \rho(r_c+ \rho r_s) + r_s )    2 r_s  	}{(\mu-\delta_1)(\mu-\delta_4)} 
	+ \frac{2( r_c + \rho r_s)    }{\mu-\delta_1} .
	\end{align*}
	We will show that: 
	\begin{align*}
	\| \Psi[\alpha ] (\phi , \xi) - \Psi[\beta](\phi,\xi) \|_{\cE} \leq \lambda \| \alpha - \beta \|_{\cE} .
	\end{align*}

	Let us abbreviate $x_s = x_s(t) = x(t,\phi,\xi,\alpha)
	$ and $y_s = y_s(t) = x(t,\phi, \xi, \beta)$. Additionally, let's abbreviate $ x_c = \alpha ( \Phi(t,\phi),x_s(t)) $ and $ y_c = \beta ( \Phi(t,\phi),y_s(t)) $. 
	We wish to bound the follow integral:
	\begin{align*}
	| \Psi[\alpha ] (\phi , \xi) - \Psi[\beta](\phi,\xi) | & \leq   \int_0^\infty | \cN_c( x_c,x_s)-\cN_c(y_c,y_s)| dt  .
	\end{align*}
	We calculate: 
	\begin{align*}
	| \cN_c( x_c,x_s)-\cN_c(y_c,y_s)| & \leq |x_c^2 - y_c^2 | + \|x_s^2 - y_s^2\|_{\ell^1} \\
	&\leq 2( r_c + \rho r_s)  | x_c - y_c| + 2 r_s \| x_s - y_s\|_{\ell^1} \\
	&\leq 2 (  \rho(r_c+ \rho r_s) + r_s )\| x_s - y_s\|_{\ell^1} + 2( r_c + \rho r_s)   \| \alpha - \beta \|_{\cE} \| y_s\|_{\ell^1} .
	\end{align*}
	Hence: 
	\begin{align*}
	| \Psi[\alpha ] (\phi , \xi) - \Psi[\beta](\phi,\xi) | & \leq  2 (  \rho(r_c+ \rho r_s) + r_s ) \int_0^\infty \| x_s - y_s\|_{\ell^1} dt   \\
	& \qquad +  2( r_c + \rho r_s)   \| \alpha - \beta \|_{\cE} \int_0^\infty \|y_s\|_{\ell^1} dt \\
	&\leq  2 (  \rho(r_c+ \rho r_s) + r_s ) \int_0^\infty  2 r_s  \| \alpha - \beta \|_{\cE} \| \xi \|_{\ell^1} \left( \frac{e^{-(\mu-\delta_1)t}  - e^{-(\mu-\delta_4)t} }{-(\mu-\delta_1)+(\mu-\delta_4)} \right) dt \\
	& \qquad +  2( r_c + \rho r_s)   \| \alpha - \beta \|_{\cE} \int_0^\infty e^{-(\mu-\delta_1)t} \|\xi\|_{\ell^1}  dt .
	\end{align*}
	The first integral can be solved analogously to \eqref{eq:FunnyIntegral}, and thus we have the final bound:  
	\begin{align*}
	| \Psi[\alpha ] (\phi , \xi) - \Psi[\beta](\phi,\xi) | &\leq 	\frac{ 2 (  \rho(r_c+ \rho r_s) + r_s )    2 r_s  \| \alpha - \beta \|_{\cE} \| \xi \|_{\ell^1}	}{(\mu-\delta_1)(\mu-\delta_4)} 
	+ \frac{2( r_c + \rho r_s)   \| \alpha - \beta \|_{\cE} \|\xi\|_{\ell^1} }{\mu-\delta_1} .
	\end{align*}
	After factoring out $  \| \alpha - \beta \|_{\cE} \|\xi\|_{\ell^1} $, we have our definition of $ \lambda$. 
	
	If $\lambda <1$, as assumed in the Theorem, then $\Psi $ is a contraction mapping. So by the Banach fixed point theorem, there exists a unique map $ \hat{\alpha} \in \cB$ such that $ \Psi [ \hat{\alpha}] = \hat{\alpha}$. 
	
\end{proof}

\section{Numerical results} \label{sec:Numerical_results}
In this section, we show computer-assisted proofs of our main results for the initial-boundary value problem \eqref{eq:CGL}.
All computations are carried out on Windows 10, Intel(R) Core(TM) i7-6700K CPU @ 4.00GHz, and MATLAB 2019a with INTLAB - INTerval LABoratory \cite{Ru99a} version 11 and Chebfun - numerical computing with functions \cite{MR2767023} version 5.7.0.
All codes used to produce the results in this section are freely available from \cite{bib:codes}.

\subsection{Proof of Theorem \ref{thm:branching}}

It is obvious that the solution of \eqref{eq:cnheq} has a symmetry $u(t\expmig,x)=\overline{u(t\expig,x)}$ for a real $t$.
Then, to prove Theorem \ref{thm:branching}, it is sufficient to show that the imaginary part of $u(z,x)$ is a non-zero function at a certain point $z\in\R$ satisfying $z_B<z$, where $z_B$ denotes the blow-up time of \eqref{eq:nheq}.
We take a path $\tilde{\Gamma}_{\theta}$ which bypasses the blow-up point $z_B$ as shown in Fig.~\ref{fig:fig2}.
Here, $z_B\approx0.0119$ holds \cite{Cho2016} under the periodic boundary condition with the initial data $u_0(x)=50(1-\cos(2\pi x))$. 

\begin{figure}[htbp]\em
	\centering
	\includegraphics[width=8cm]{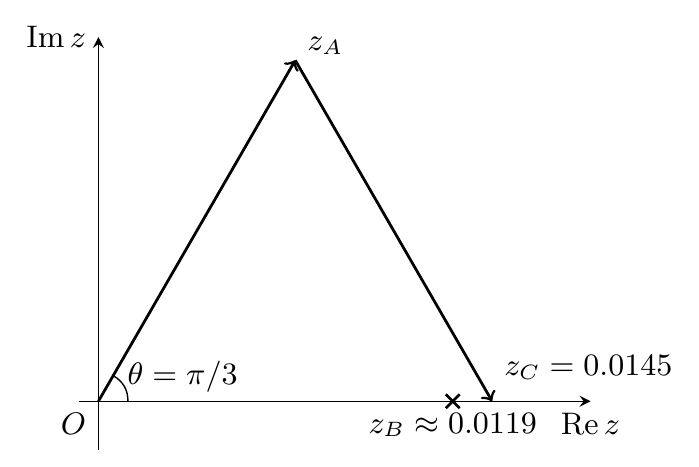}\\
	\caption{%
		The path $\tilde{\Gamma}_{\theta} \bydef \{z: z=t\expig~(O\to z_A),~z=z_A+t\expmig~(z_A\to z_C),~\theta=\pi/3\}$ is plotted, where $z_A = 0.00725(1 +\sqrt{3}\,\im)$ and $z_C=0.0145$.
		We divide each segment $(O\to z_A$ and $z_A\to z_C)$ into 64 steps.
	}
	\label{fig:fig2}
\end{figure}

We divide each segment into 64 steps and, by using our rigorous integrator introduced in Section \ref{sec:setting} -- \ref{sec:timestepping}, analytically continue to the $z_C$ point in Fig.~\ref{fig:fig2}.
From $O$ to $z_A$, we set $\theta=\pi/3$ in \eqref{eq:CGL}.
After that, we change $\theta=-\pi/3$ from $z_A$ to $z_C$.
For each time step $J_i$ ($i=1,2,\dots,128$), we get an approximate solution by using Chebfun \cite{MR2767023} as
\begin{align}
\bar{u}(t,x) &= \sum_{|k| \le N} \left(\ba_{0,k} + 2\sum_{\ell=1}^{n-1} \ba_{\ell,k} T_{\ell}(t)\right) e^{\im k\omega x},~\omega=2\pi,~x\in(0,1),~t\in J_i
\label{eq:ApproximateSolution}
\end{align}
with $N = 25$ and $n = 13$.
We also set $m=0$ in Theorem \ref{thm:sol_map}, which decides the size of variational problem.
The profiles of numerically computed $\mathrm{Re}(\bar{u})$ and $\mathrm{Im}(\bar{u})$ are plotted in Fig.~\ref{fig:fig3}.

\begin{figure}[htbp]\em
	\centering
	\begin{minipage}{0.49\hsize}
		\centering
		\includegraphics[width=\textwidth]{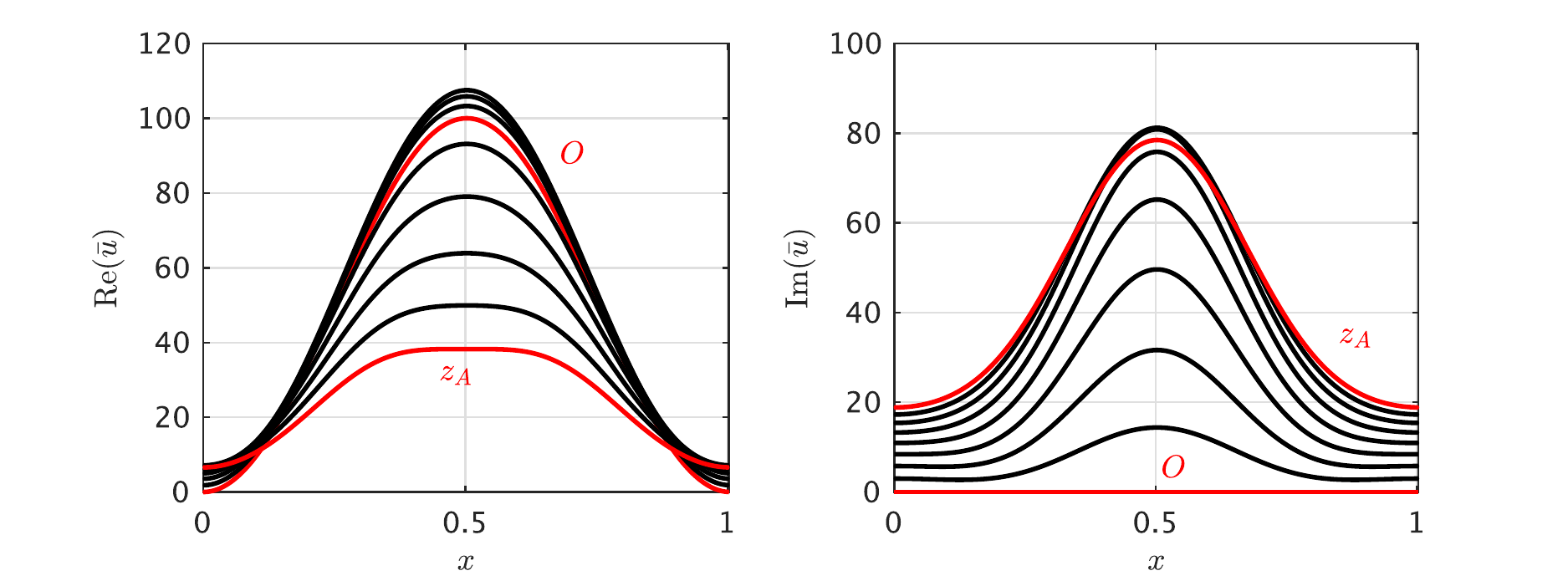}\\
		$(a)$ $O\to z_A$
	\end{minipage}
	\begin{minipage}{0.49\hsize}
		\centering
		\includegraphics[width=\textwidth]{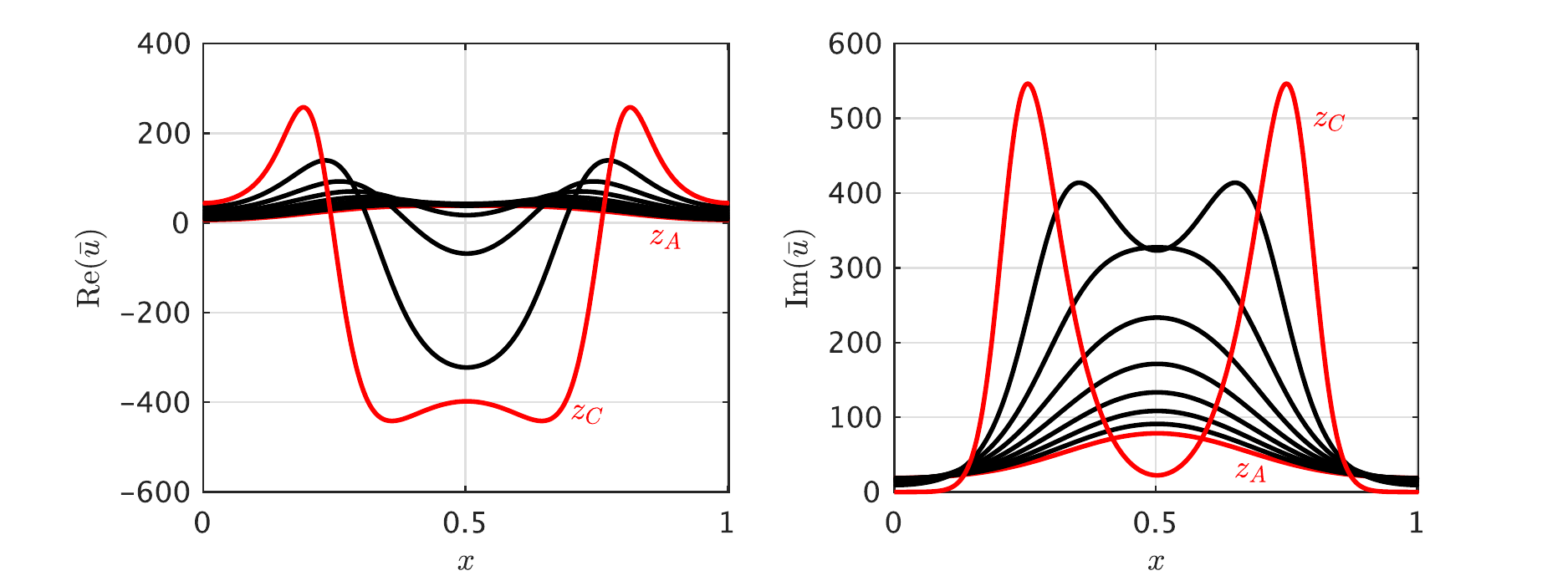}\\
		$(b)$ $z_A\to z_C$
	\end{minipage}
	\caption{%
		Profiles of numerically computed solutions on each segment $((a)~O\to z_A$ and $(b)~z_A\to z_C)$ are plotted.
		At the point $z_C$, the imaginary part of $\bar{u}(z_C,x)$ becomes a non-zero function.
	}
	\label{fig:fig3}
\end{figure}

After 128 time stepping (at $z_C$), we show that the imaginary part of the solution $u(z_C,x)$ is a non-zero function.
To prove this, we use the $\ell^1$ norm of the Fourier coefficients, say
\[
\|v\|\bydef \|c\|_{\ell^1}=\sum_{k\in\Z }|c_k|\quad\mbox{for}\quad v(x) = \sum_{k\in\Z }c_ke^{\im k\omega x}.
\]
Let the solution $u(z_C,x)$ of \eqref{eq:cnheq} at $z_C$ and its numerically computed solution be denoted by
\[
u(z_C,x)=\sum_{k\in\mathbb{Z}}a_k^{z_C}e^{\im k\omega x}\quad\mbox{and}\quad \bar{u}(z_C,x)=\sum_{|k|\le N}\ba_k^{z_C}e^{\im k\omega x},
\]
respectively.
We also denote two bi-infinite complex-valued sequences $a^{z_C}=(a_k^{z_C})_{k\in\mathbb{Z}}$ and $\ba^{z_C}=(\dots,0,\ba_{-N}^{z_C},\dots,\ba_{N}^{z_C},0\dots)$.
Then, the imaginary part of the solution at $z_C$ follows
\begin{align*}
\|\mathrm{Im}\left(u(z_C,\cdot)\right)\|&=\|\mathrm{Im}\left(\bar{u}(z_C,\cdot)\right)+\mathrm{Im}\left(u(z_C,\cdot)\right)-\mathrm{Im}\left(\bar{u}(z_C,\cdot)\right)\|\\
&\ge\|\mathrm{Im}\left(\bar{u}(z_C,\cdot)\right)\|-\|\mathrm{Im}\left(u(z_C,\cdot)-\bar{u}(z_C,\cdot)\right)\|\\
&\ge \|\mathrm{Im}\left(\bar{u}(z_C,\cdot)\right)\|-\left\|a^{z_C}-\ba^{z_C}\right\|_{\ell^1}\\
&\ge \|\mathrm{Im}\left(\bar{u}(z_C,\cdot)\right)\|-\varepsilon^{z_C},
\end{align*}
where $\varepsilon^{z_C}$ is the point-wise error at $z_C$ point and our rigorous computation yields $\varepsilon^{z_C}=0.5765$.
Furthermore, the imaginary part of $\bar{u}(z_C,x)$ is presented by
\begin{equation}\label{eq:imag_tilde_u}
\mathrm{Im}\left(\bar{u}(z_C,x)\right)=\frac{1}{2}\sum_{|k| \le N}\left[\mathrm{Im}\left(\ba_{k}^{z_C}\right)+\mathrm{Im}\left(\ba_{-k}^{z_C}\right)-\im\left(\mathrm{Re}\left(\ba_{k}^{z_C}\right)-\mathrm{Re}\left(\ba_{-k}^{z_C}\right)\right)\right] e^{\im k\omega x}.
\end{equation}
This fact is followed by
\begin{align*}
\mathrm{Im}\left(\ba_{k}^{z_C}e^{\im k\omega x}\right)&=\mathrm{Im}\left(\ba_{k}^{z_C}\right)\cos(k\omega x)+\mathrm{Re}\left(\ba_{k}^{z_C}\right)\sin(k\omega x)\\
&=\mathrm{Im}\left(\ba_{k}^{z_C}\right)\left(\frac{e^{\im k\omega x}+e^{-\im k\omega x}}{2}\right)+\mathrm{Re}\left(\ba_{k}^{z_C}\right)\left(\frac{e^{\im k\omega x}-e^{-\im k\omega x}}{2\im}\right)
\end{align*}
for each $|k|\le N$.
Summing up for $k$, we have
\begin{align*}
\sum_{|k| \le N} \mathrm{Im}\left(\ba_{k}^{z_C}e^{\im k\omega x}\right) &= \frac{1}{2}	\sum_{|k| \le N}\left[ \mathrm{Im}\left(\ba_{k}^{z_C}\right)\left({e^{\im k\omega x}+e^{-\im k\omega x}}\right)-\im\cdot\mathrm{Re}\left(\ba_{k}^{z_C}\right)\left({e^{\im k\omega x}-e^{-\im k\omega x}}\right)\right]\\
&=\frac{1}{2}\sum_{|k| \le N}\left[\mathrm{Im}\left(\ba_{k}^{z_C}\right)+\mathrm{Im}\left(\ba_{-k}^{z_C}\right)-\im\left(\mathrm{Re}\left(\ba_{k}^{z_C}\right)-\mathrm{Re}\left(\ba_{-k}^{z_C}\right)\right)\right] e^{\im k\omega x}.
\end{align*}
We rigorously compute \eqref{eq:imag_tilde_u} based on interval arithmetic and the following is given
\[
\|\mathrm{Im}\left(\bar{u}(z_C,\cdot)\right)\|-\varepsilon^{z_C}
\in [660.4935,  660.4936].
\]
This implies $\|\mathrm{Im}\left(u(z_C,\cdot)\right)\|>0$.

Consequently, we prove that the imaginary part of the solution $u(z_C,x)$ is the non-zero function.
Then, there exists a branching singularity on the real line at $t\in (0,z_C)$ with $z_C=0.0145$.
The results of analytical continuation is shown in Fig.~\ref{fig:fig4}.

\begin{figure}[htbp]\em
	\centering
	\begin{minipage}{0.49\hsize}
		\centering
		\includegraphics[width=\textwidth]{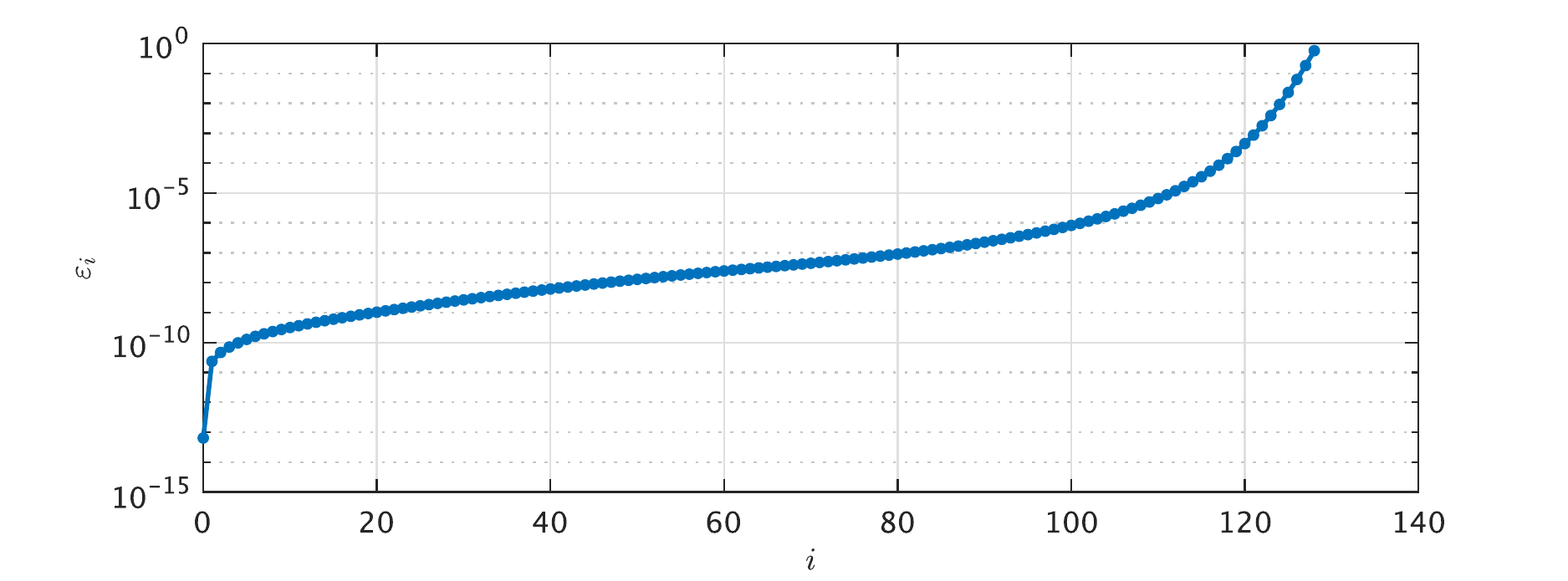}\\
		$(a)$ $\varepsilon_{i}$ bounds at $t_i$
	\end{minipage}
	\begin{minipage}{0.49\hsize}
		\centering
		\includegraphics[width=\textwidth]{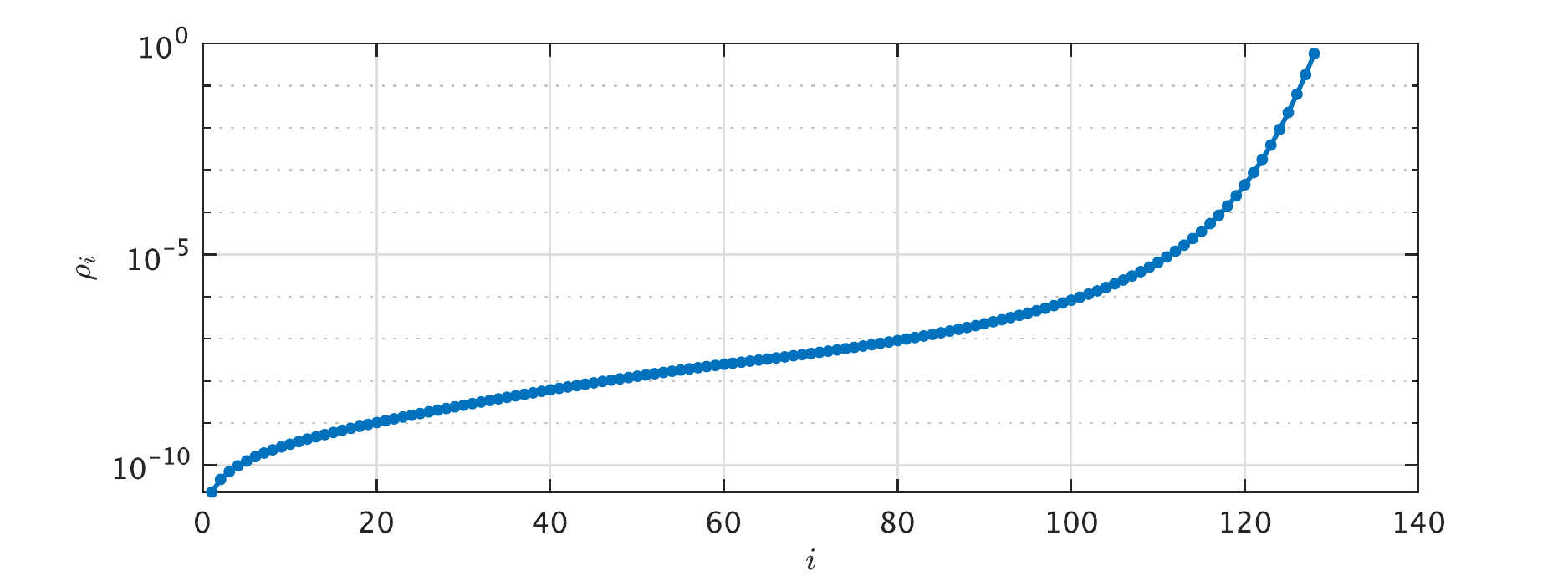}\\
		$(b)$ $\varrho_i$ bounds on $J_i$
	\end{minipage}
	\caption{%
		For each time step $J_i~(i=1,2,\dots,128)$,  results of analytical continuation are plotted. $(a)$ The point-wise error $\varepsilon_{i}$ for $i=0,1,\dots,128$.
		$(b)$ The radius of the neighborhood $B_{J_i}(\ba,\varrho_i)$ in which the exact solution is included for $i=1,2,\dots,128$.
	}
	\label{fig:fig4}
\end{figure}

Lastly, to obtain a lower bound on the location of the branch singularity $z_B$, it suffices to rigorously integrate our initial data $u_0(x)$ in purely real time with $ \theta = 0 $ in \eqref{eq:CGL} for as long as possible. 
Again using Chebfun \cite{MR2767023} we obtain an approximate solution as in \eqref{eq:ApproximateSolution} using computational parameters $N=20$, $n=15$, $m=0$, and adaptive time stepping.
By using our rigorous integrator introduced in Section \ref{sec:setting} -- \ref{sec:timestepping}, we are able to validate our solution until $t= 0.0116$, hence the blowup point must satisfy $ z_B \geq 0.0116$. 
\hfill$\square$
 
 \begin{rem}
 	Generally speaking, the propagation of error estimates makes rigorous integration difficult.
 	As such, both the length of a contour and how close it approaches the blow-up point will    affect the results of our rigorous integrator. We believe the longer contour is a principal reason for explaining why we are able to get $0.0003$ close to the blowup point from below, but only $0.0031$ close from above.  
 	The upper bound could also be improved by adjusting the step size more carefully.
 \end{rem}

\subsection{Proof of Theorem \ref{thm:GE}}

We show the proof of global existence on the straight path $\Gamma_{\theta}$.
To prove the global existence of the solution of \eqref{eq:cnheq}, we check the hypothesis of Theorem \ref{prop:MainGlobalTheorem} by using rigorous numerics.
More precisely, at $t=t_i$ (after $i$\,th time stepping), we rigorously have
\begin{align*}
&|a_0(t_i)|\le |\ba_0(t_i)| + \varepsilon_{i},
&\|a^{(\bm{s})}(t_i)\|_{\ell^1}\le \|\ba^{(\bm{s})}(t_i)\|_{\ell^1} + \varepsilon_{i},
\end{align*}
where $a^{(\bm{s})}(t) = (a_k(t))_{k\in\Z,~k\neq 0}$ and $\ba^{(\bm{s})}(t) = (\ba_k(t))_{|k|\le N,~k\neq 0}$. 
To construct a trapping region $U$ as in Corollary \ref{prop:AlphaImage} which might contain $a(t_i)$, we fix mildly inflated radii constants
\begin{align*}
 r_c &\bydef \big(  |\ba_0(t_i)| + \varepsilon_{i} \big)  +0.02 \left( \|\ba^{(\bm{s})}(t_i)\|_{\ell^1} + \varepsilon_{i} \right) 
 &
 r_s &\bydef \|\ba^{(\bm{s})}(t_i)\|_{\ell^1} + \varepsilon_{i} .
\end{align*}
Then, for $\mu = \omega^2\cos\theta$, we compute the nearly optimal bounds of $\rho$ explicitly given in Remark \ref{rem:ChooseRho}.
If such a (positive) $\rho$ is not obtained, then we are unable validate the center stable manifold in a region large enough to contain $a(t_i)$.  
In such a case, we continue rigorous integration to the next time step, i.e., $J_{i+1}$.
Fig.~\ref{fig:fig5} shows verified results of our rigorous integration for $\theta=\pi/3$, $\pi/4$, $\pi/6$ and $\pi/12$.

Fig.~\ref{fig:fig5}\,$(a)$ shows results of our rigorous integrator in the case of $\theta=\pi/3$.
After 82 steps of the rigorous integration (at $t=0.205$), the hypothesis of Theorem \ref{prop:MainGlobalTheorem} holds for $r_c=4.9153$, $r_s=0.0081$, $\rho = 0.0086$, and $\lambda=0.9930$, and $a(t)$ is contained in the region $U$ described in Corollary \ref{prop:AlphaImage}. 
We succeed in proving the global existence of solution on $\Gamma_{\pi/3}$.

Fig.~\ref{fig:fig5}\,$(b)$ also shows results of our rigorous integrator in the case of $\theta=\pi/4$.
At $t=0.15$, the hypothesis of Theorem \ref{prop:MainGlobalTheorem} holds for $r_c=6.9290$, $r_s=0.0140$, $\rho = 0.0092$, and $\lambda=0.9868$, and $a(t)$ is contained in the region $U$ described in Corollary \ref{prop:AlphaImage}. 
Number of time stepping is 60.
We also succeed in proving the global existence of solution on $\Gamma_{\pi/4}$.

In the case of $\theta = \pi/6$, our rigorous integrator with the same setting above fails to prove local inclusion because the peak of solution becomes large ($\|\ba\|_X\approx 240$).
So we change the value of $m$ as $m=0$ ($0\le t\le 0.02$) and $m=2$ ($t\ge 0.02$).
Then, as shown in Fig.~\ref{fig:fig5}\,$(c)$, our rigorous integrator succeeds in including the solution until $t = 0.1275$ (51 steps).
At that time, the hypothesis of Theorem \ref{prop:MainGlobalTheorem} holds for $r_c=8.4817$, $r_s=0.0178$, $\rho = 0.0112$, and $\lambda=0.9858$, and $a(t)$ is contained in the region $U$ described in Corollary \ref{prop:AlphaImage}. 
Proof of the global existence on $\Gamma_{\pi/6}$ is complete.

Finally, the case $\theta = \pi/12$ is slightly difficult to complete.
The value of $\|\ba\|_X$ is almost 450 because this path is close to the blow-up point.
We adapt both the step size of time stepping and the value of $m$ as 
\[
h=\begin{cases}
1.25\times 10^{-3} & (0\le t\le 0.01),\\
6.25\times 10^{-4} & (0.01\le t\le 0.02),\\
2.5\times 10^{-3} & (t\ge 0.02),
\end{cases}\quad
m = \begin{cases}
0 & (0\le t\le 0.02),\\
2 & (t\ge 0.02).
\end{cases}
\]
Then, after 64 time stepping, the hypothesis of Theorem \ref{prop:MainGlobalTheorem} holds at $t=0.12$ for $r_c=9.3566$, $r_s=0.0278$, $\rho = 0.0095$, and $\lambda=0.9650$, and $a(t)$ is contained in the region $U$ described in Corollary \ref{prop:AlphaImage}. 
On $\Gamma_{\pi/12}$, we also prove the global existence of solution of \eqref{eq:CGL}.
The results of rigorous integrator is shown in Fig.~\ref{fig:fig5}\,$(d)$.\hfill$\square$

\begin{figure}[htbp]\em
	\begin{minipage}{\hsize}
		\centering
		\includegraphics[width=0.5\textwidth]{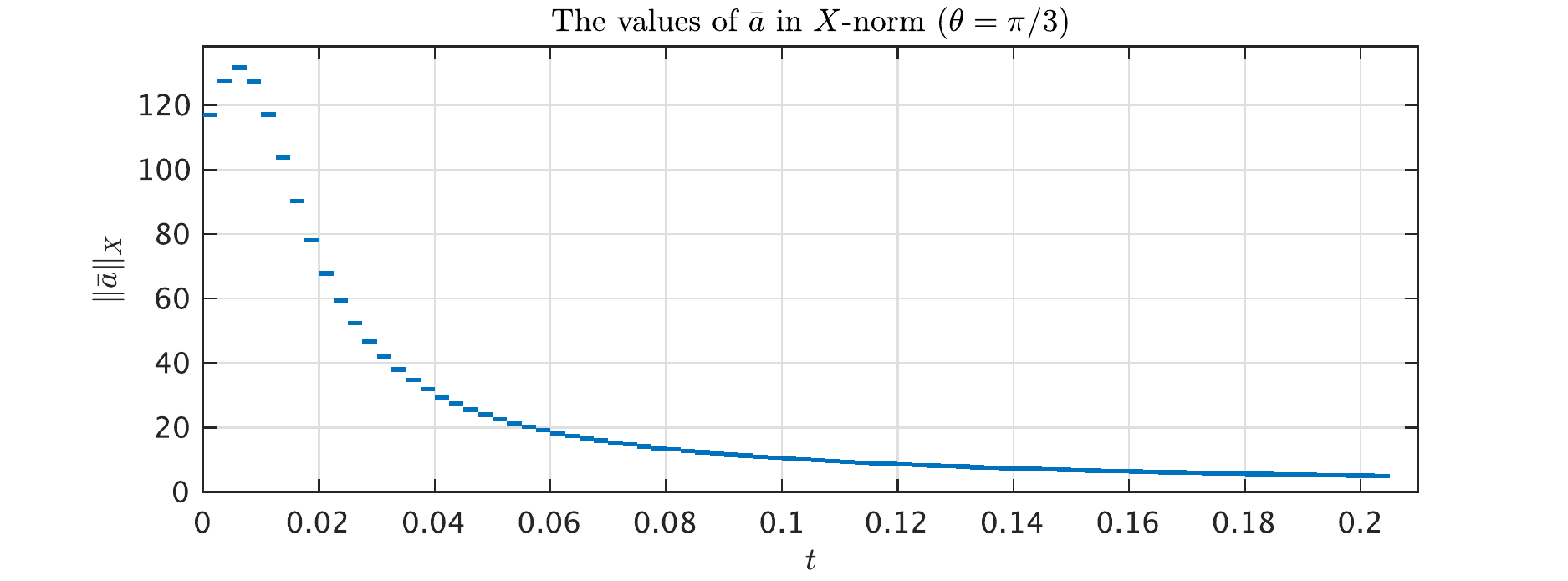}
		\includegraphics[width=0.49\textwidth]{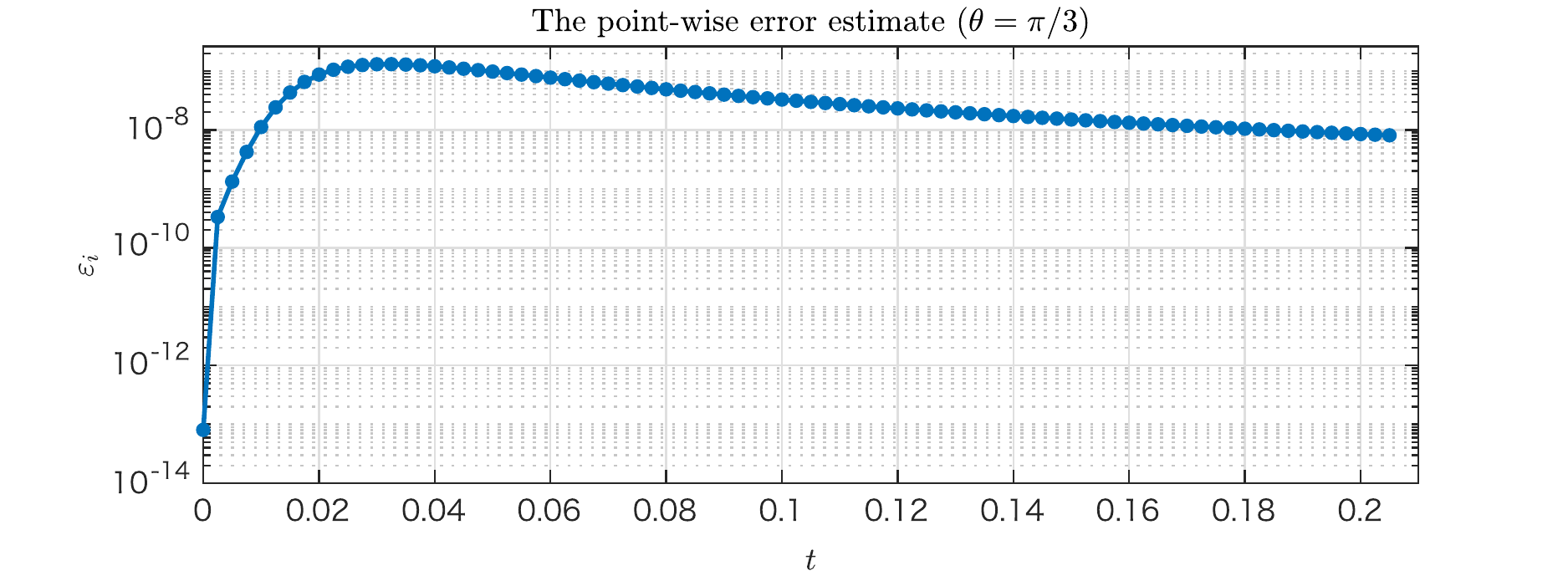}
		$(a)$ $\theta = \pi/3$: The step size is equidistantly taken as $h=2.5\times 10^{-3}$. We  set $N=14$ (maximum wave number of Fourier), $n=13$ (number of Chebyshev basis), and $m=2$.
	\end{minipage}\\[4mm]
	\begin{minipage}{\hsize}
		\centering
		\includegraphics[width=0.5\textwidth]{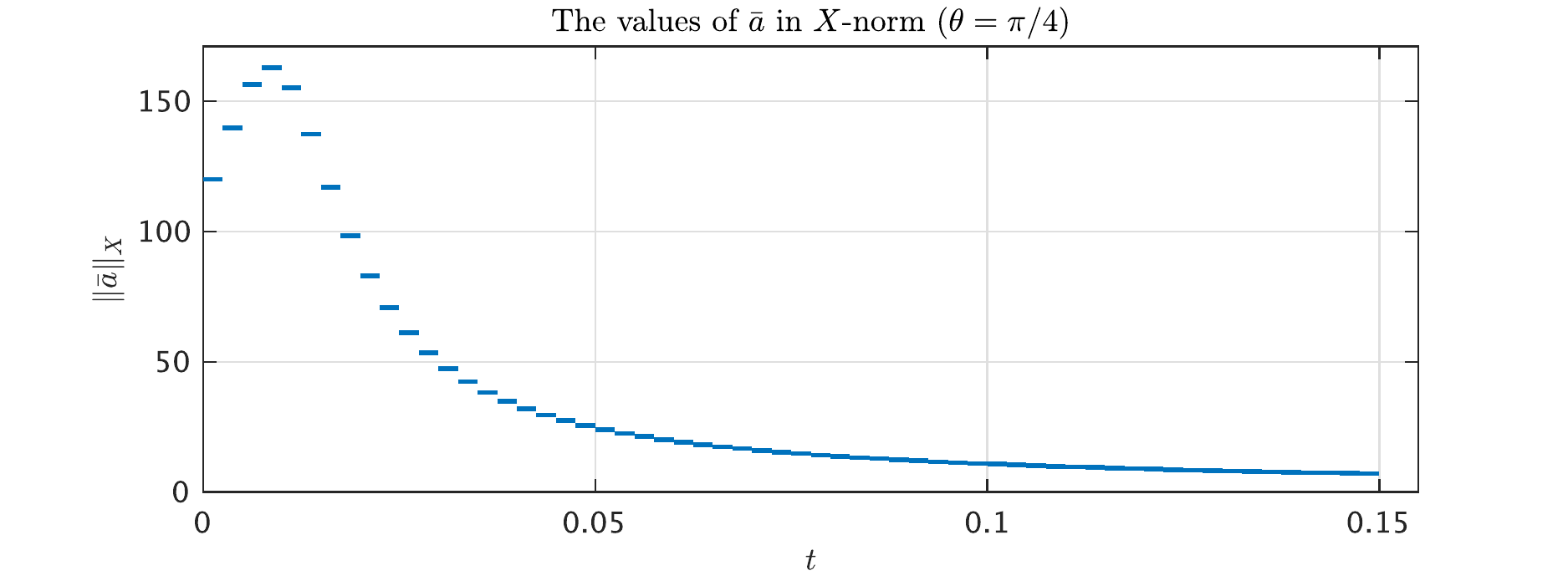}
		\includegraphics[width=0.49\textwidth]{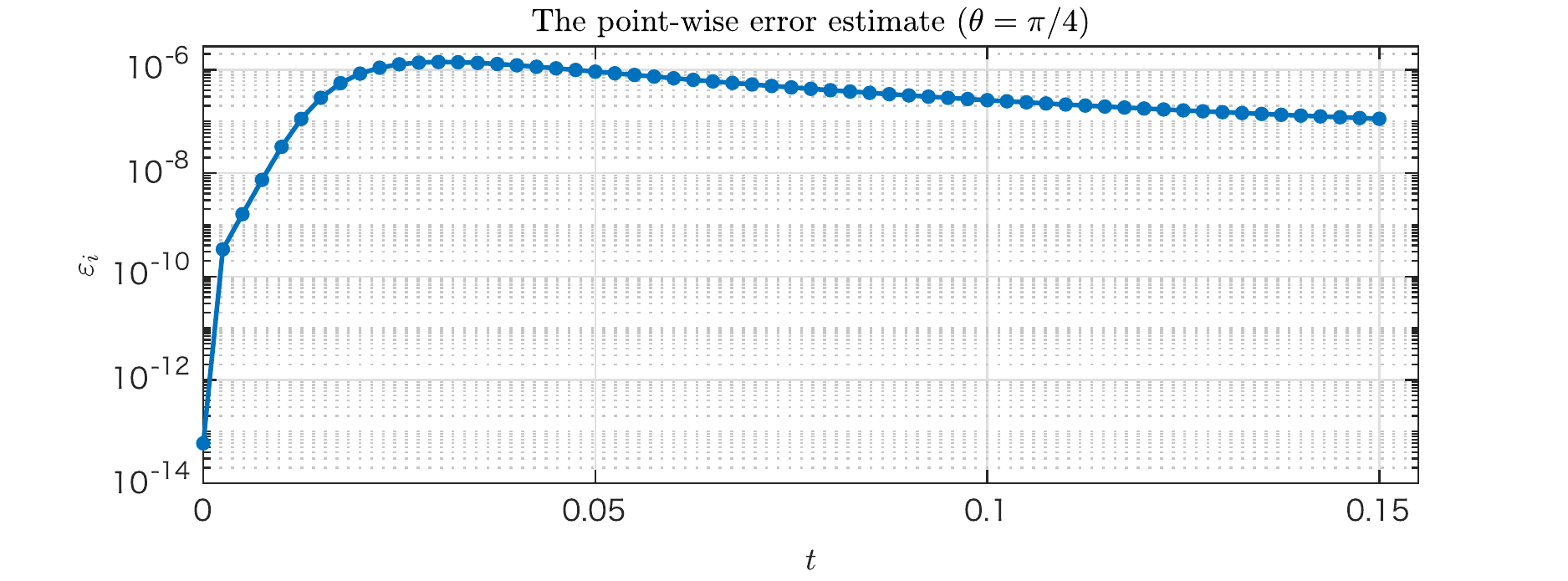}
		$(b)$ $\theta = \pi/4$: The step size is equidistantly taken as $h=2.5\times 10^{-3}$. We set $N=14$, $n=13$, and $m=2$.
	\end{minipage}\\[4mm]
	\begin{minipage}{\hsize}
		\centering
		\includegraphics[width=0.5\textwidth]{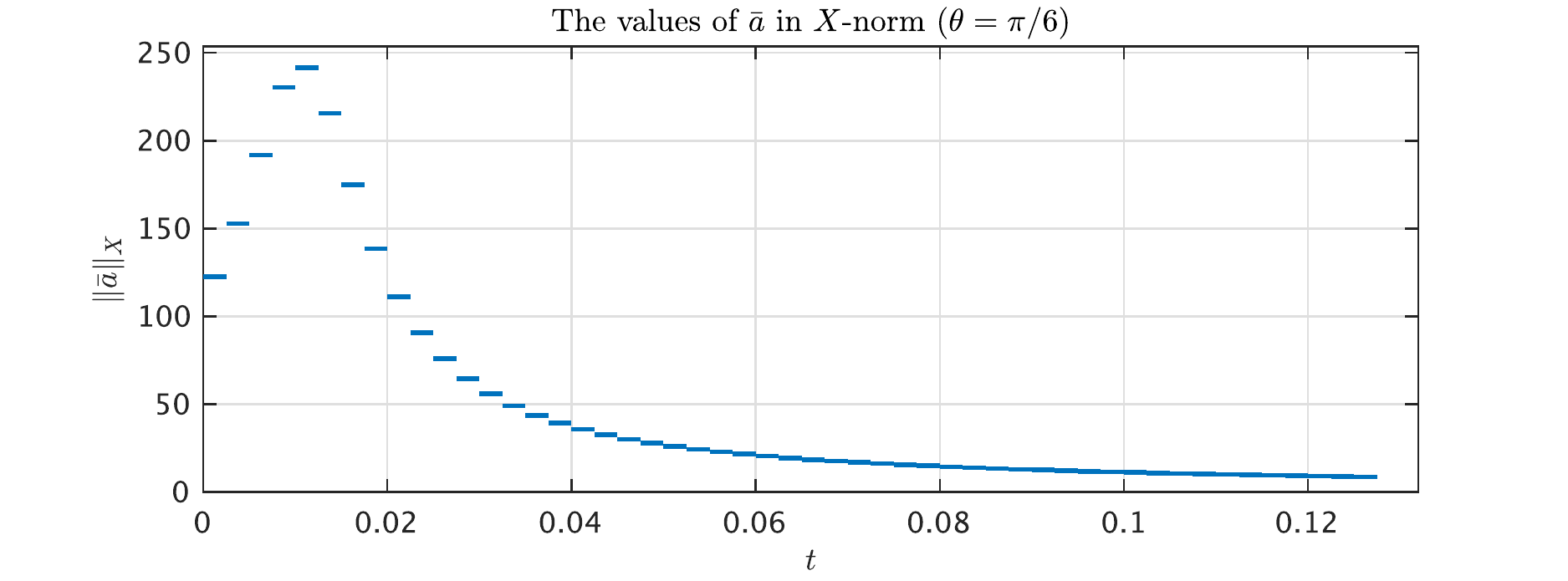}
		\includegraphics[width=0.49\textwidth]{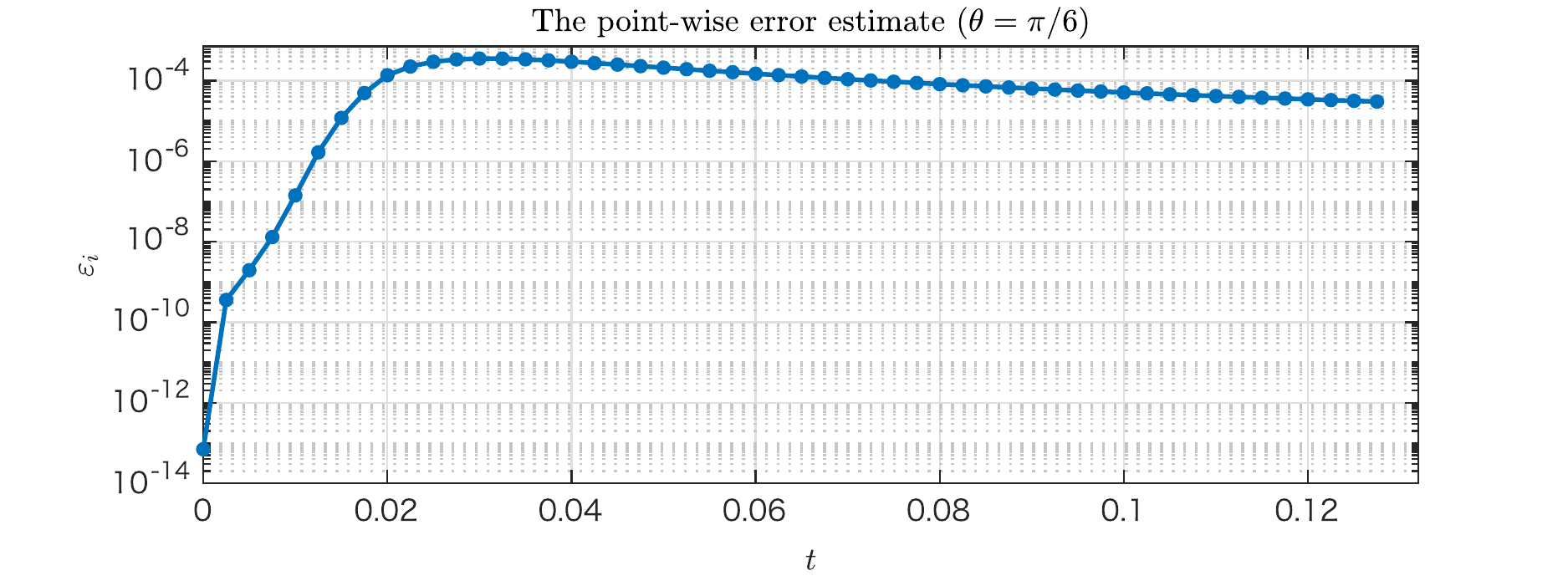}
		$(c)$ $\theta = \pi/6$: The step size is equidistantly taken as $h=2.5\times 10^{-3}$. We set $N=15$ and $n=14$.
		We adapt $m=0$ $(0\le t\le 0.02)$ and $m=2$ $(t\ge 0.02)$ for the success of our proof.
	\end{minipage}\\[4mm]
	\begin{minipage}{\hsize}
		\centering
		\includegraphics[width=0.5\textwidth]{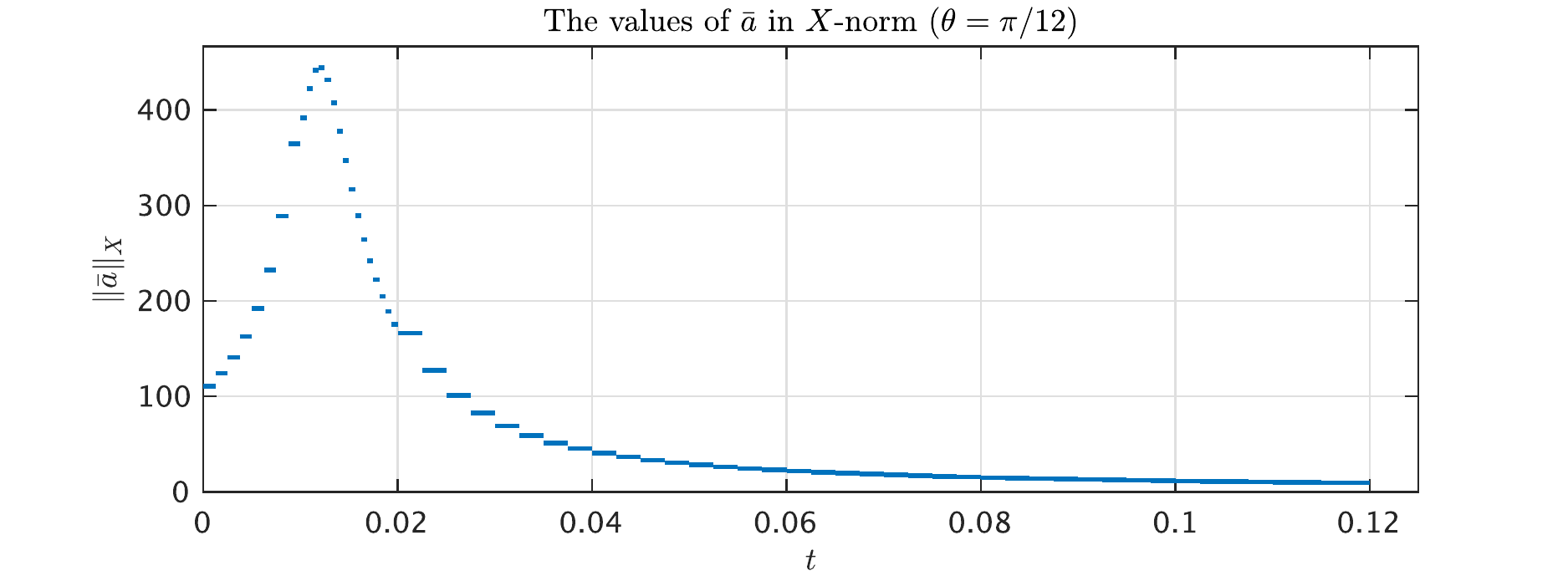}
		\includegraphics[width=0.49\textwidth]{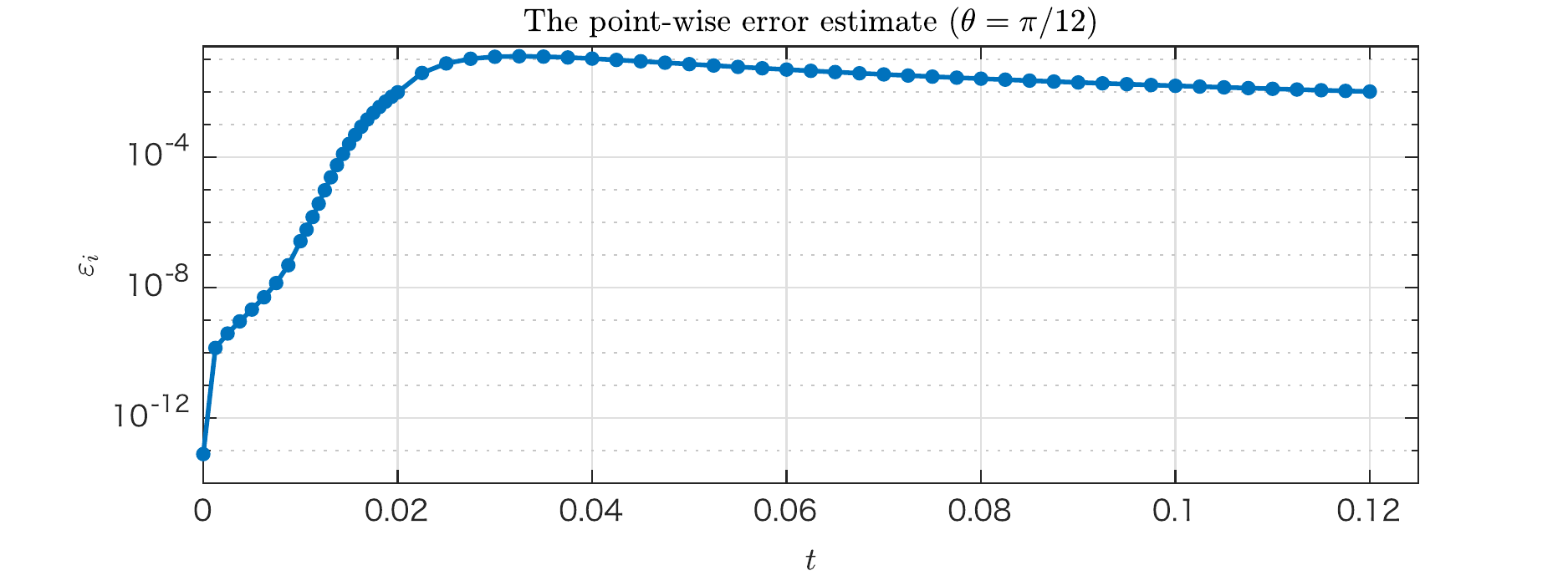}
		$(d)$ $\theta = \pi/12$: The step size is adapted as $h=1.25\times 10^{-3}$ $(0\le t\le 0.01)$, $h=6.25\times 10^{-4}$ $(0.01\le t\le 0.02)$, and $h=2.5\times 10^{-3}$ $(t\ge 0.02)$. We set $N=19$ and $n=15$.
		We also adapt $m=0$ $(0\le t\le 0.02)$ and $m=2$ $(t\ge 0.02)$.
	\end{minipage}\\
	\caption{Results of rigorous integration for $(a)$ $\theta=\pi/3$ , $(b)$ $\pi/4$, $(c)$ $\pi/6$ and $(d)$ $\pi/12$.}
	\label{fig:fig5}
\end{figure}

\section*{Conclusion}

In this paper, we introduced a computational method for computing rigorous local inclusions of solutions of the Cauchy problems for the nonlinear heat equation \eqref{eq:cnheq} for complex time values. The proof is constructive and provides explicit bounds for the inclusion of the solutions of the Cauchy problem rewritten as $F(a)=0$ on the Banach space $X=C(J;\ell^1)$. The idea is to show that a simplified Newton operator $T:X \to X$ of the form $T(a)=a - \mathscr{A}F(a)$ has a unique fixed point by the Banach fixed point theorem. The rigorous enclosure of the fixed point yields the solution of the Cauchy problem. The construction of the solution map operator $\mathscr{A}$ is guaranteed with the theory of Section~\ref{sec:solution_operator} by verifying the hypotheses of Theorem~\ref{thm:sol_map}. More explicitly, this requires first computing rigorously with Chebyshev series the solutions of the linearized problems \eqref{eq:finite_dim_variational_problem} and \eqref{eq:finite_dim_adjoint_variational_problem}, and second proving the existence of the evolution operator on the tail. Once the hypotheses are verified successfully, the solution map operator $\mathscr{A}$ exists, is defined as in \eqref{eq:definition_of_scriptA}, and the theory of Section~\ref{sec:Localinclusion} is used to enclose rigorously the fixed point of $T$, and therefore the local inclusion in time of the Cauchy problem. Section~\ref{sec:timestepping} then introduces a method for applying iteratively the approach to compute solutions over long time intervals. This technique is directly used to prove Theorem~\ref{thm:branching} about the existence of a branching singularity in the nonlinear heat equation \eqref{eq:cnheq}. Afterwards, in Section~\ref{sec:center-stable-manifold}, we introduced an approach based on the Lyapunov-Perron method for calculating part of a center-stable manifold, which is then used to prove that an open set of solutions of the given Cauchy problem converge to zero, hence yielding the global existence in time of the solutions. This method is used to prove our second main result, namely Theorem~\ref{thm:GE}. The results are presented in details in Section~\ref{sec:Numerical_results}. 

Finally, we conclude this paper by discussing some potential extensions and remained problems for the complex-valued nonlinear heat equation \eqref{eq:cnheq}. We believe that our methodology of rigorous numerics could be extended to more general time-dependent PDEs. It is interesting to rigorously integrate the solution orbit of high (space) dimensional PDEs.
Furthermore, the case where the time is pure imaginary offers us an interesting problem. The computation in \cite{Cho2016} suggests strongly that the solution exists globally in time and decays slowly toward the zero solution. Since the semigroup generator becomes conservative in that case, our method in this paper, which depends on the existence of dissipation, cannot be used. 

\section*{Acknowledgement}
The work of the first author was supported  in part by JSPS Grant-in-Aid for Early-Career Scientists [18K13453] and 	
Grant-in-Aid for Scientific Research (B) [16H03950].
The third author was supported in part by the grant from NIH [T32 NS007292].

\bibliographystyle{abbrv}
\bibliography{papers,BIBInvariantManifold,Bib_complexblowup}
\end{document}